\documentclass{amsart}
\usepackage{amssymb,amsmath, amsthm,latexsym}
\usepackage{graphics}
\usepackage{psfrag}
\usepackage{tikz-cd}
\usepackage{amscd}
\usepackage{graphicx}
\newcommand{\cal}[1]{\mathcal{#1}}
\theoremstyle{plain}
\newtheorem{theo}{Theorem}

\newtheorem*{question}{Question}

\newtheorem{lemma}{Lemma}[section]
\newtheorem{theorem}[lemma]{Theorem}
\newtheorem{proposition}[lemma]{Proposition}
\newtheorem{corollary}[lemma]{Corollary}
\theoremstyle{definition}

\newtheorem{definition}[lemma]{Definition}
\newtheorem{remark}[lemma]{Remark}
\newtheorem{example}[lemma]{Example}
\let\egthree=\phi
\let\phi=\varphi
\let\varphi=\egthree




\begin{document}
\title{Signature of surface bundles and bounded cohomology}
\author{Ursula Hamenst\"adt}
\thanks{Partially supported by the Hausdorff Center Bonn\\
AMS subject classification:20J06, 55N10, 57M07}
\date{November 11, 2020}


\begin{abstract}
Extending a result of Morita, we show that all tautological classes
of the moduli space of genus $g$ curves are bounded. 
As an application, we obtain that for a surface bundle
$E\to B$ over a closed surface, the
Euler characteristic $\chi(E)$ and the signature $\sigma(E)$
are related by $3 \vert \sigma(E)\vert \leq \vert \chi(E)\vert$. 
\end{abstract}

\maketitle

\section{Introduction}

A \emph{surface bundle over a surface} is a closed oriented 
$4$-dimensional manifold $E$ which admits a fibration
$\Pi:E\to B$ onto a closed oriented surface $B$ of genus $h\geq 0$, with
fiber $S_g$ a closed surface of genus $g\geq 1$.

Natural topological invariants of such surface bundles
are the \emph{Euler characteristic} $\chi(E)$
and the \emph{signature} $\sigma(E)$.
The Euler characteristic $\chi(E)$ satisfies
\[\chi(E)=(2g-2)(2h-2).\]

The \emph{Miyaoka inequality} states that if $E$ is a \emph{complex}
surface bundle or, more generally, an arbitrary compact complex surface, then
$\vert 3\sigma(E)\vert \leq \vert \chi(E)\vert$, with equality if and only
if $E$ is a quotient of the ball. Kapovich \cite{Ka98} showed that
no surface bundle over a surface is a quotient of the ball and hence
we obtain that 
$3\vert \sigma(E)\vert < \vert \chi(E)\vert$ for all complex surface bundles
over a surface. 

Kotschick \cite{Ko98} proved that the inequality
$3\vert \sigma(E)\vert < \vert \chi(E)\vert$ also holds true for 
smooth surface bundles $E$ over surfaces which 
admit an Einstein metric, and that we always have 
$2\vert \sigma(E) \vert \leq \vert \chi(E)\vert$.
Our first main result 
extends the Miaoka inequality to all surface bundles over surfaces and confirms a
conjecture of Kotschick \cite{Ko98}. 

\begin{theo}\label{main}
Let $\Pi:E\to B$ be a surface bundle over a surface; then
\[ 3\vert \sigma(E)\vert \leq \vert \chi(E)\vert.\]
\end{theo}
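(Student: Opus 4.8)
The plan is to reduce the inequality to the evaluation of the first tautological class against the base and then to control that evaluation through bounded cohomology. Since $E$ is a closed oriented $4$-manifold, the Hirzebruch signature theorem gives $3\sigma(E)=\langle p_1(TE),[E]\rangle$. Choosing a connection splits $TE\cong T^vE\oplus \Pi^*TB$ into the vertical tangent bundle and the pullback of $TB$; both summands are oriented real plane bundles, so $p_1=e^2$ on each, while $p_1(\Pi^*TB)=\Pi^*(e(TB)^2)=0$ because $e(TB)^2\in H^4(B)=0$. Writing $e=e(T^vE)\in H^2(E)$ for the vertical Euler class, this yields $3\sigma(E)=\langle e^2,[E]\rangle=\langle \Pi_*(e^2),[B]\rangle=\langle \kappa_1,[B]\rangle$, where $\kappa_1=\Pi_*(e^2)$ is the pullback under the classifying map $c\colon B\to B\Gamma_g$ of the first Morita--Mumford--Miller class on the classifying space of the mapping class group $\Gamma_g$.

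Next I would invoke the boundedness of the tautological classes, but tracking the norm quantitatively rather than merely qualitatively. The vertical Euler class satisfies a Milnor--Wood type estimate $\|e\|_\infty\le \tfrac12$ in bounded cohomology; hence $\|e^2\|_\infty\le\|e\|_\infty^2\le\tfrac14$. Fiber integration along the $S_g$-bundle is norm-nonincreasing up to the simplicial volume of the fiber, so $\|\kappa_1\|_\infty=\|\Pi_*(e^2)\|_\infty\le \|S_g\|\,\|e^2\|_\infty\le (4g-4)\cdot\tfrac14=g-1$. In particular $\kappa_1$ lies in the image of the comparison map from bounded cohomology, with the explicit bound $\|\kappa_1\|_\infty\le g-1$, and pulling back by $c$ does not increase this norm.

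Finally I would pair with the base. By Gromov duality between bounded cohomology and simplicial volume, $|\langle\kappa_1,[B]\rangle|\le \|\kappa_1\|_\infty\,\|B\|$, and for a closed oriented surface of genus $h\ge 1$ one has $\|B\|=2|\chi(B)|=2(2h-2)$. Combining the estimates gives $3|\sigma(E)|=|\langle\kappa_1,[B]\rangle|\le (g-1)\cdot 2(2h-2)=(2g-2)(2h-2)=|\chi(E)|$, which is the assertion. The remaining cases $h=0$ and $h=1$ are immediate: there $\|B\|=0$, so boundedness of $\kappa_1$ forces $\sigma(E)=0$ and the inequality holds trivially.

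I expect the main obstacle to be the quantitative fiber-integration estimate in bounded cohomology, together with the sharp value $\tfrac12$ for the vertical Euler class: it is precisely the product $(4g-4)\cdot\tfrac14=g-1$ that produces the optimal constant $3$, so any loss in either factor would weaken the conclusion. Establishing that fiber integration is norm-controlled by the simplicial volume of the fiber, and that these bounds are inherited under the pullback along $c$, is the delicate point, whereas the reduction via Hirzebruch and the simplicial-volume computation for surfaces are routine.
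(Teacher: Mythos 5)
Your outer frame coincides with the paper's: Hamenst\"adt also reduces to $3\sigma(E)=f^*\kappa_1[B]$, proves the quantitative bound $\Vert\kappa_1\Vert_\infty\leq g-1$ (Proposition \ref{controlextension} and Proposition \ref{final}), and concludes by the Gromov pairing $\vert\langle \kappa_1,[B]\rangle\vert\leq (g-1)\cdot 2\vert\chi(B)\vert$ exactly as in Corollary \ref{milnorwood}. The gap is the step you flag and then use anyway: the estimate $\Vert\Pi_*(\alpha)\Vert_\infty\leq \Vert S_g\Vert\,\Vert\alpha\Vert_\infty$ for fiber integration. No such norm-controlled Gysin map in bounded cohomology exists in the literature, and it is not a ``delicate point'' one can expect to fill routinely: it is the central obstruction the whole paper is built to circumvent (the introduction says explicitly that the point is to evaluate the classes ``using the transfer map \dots rather than the Gysin homomorphism''). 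Concretely, the dual chain-level construction your estimate requires --- a transfer $C_k(B)\to C_{k+2}(E)$ filling each simplex of an efficient cycle for $[B]$ with nearly minimal fiber fundamental cycles --- demands coherence of those fiber cycles with the monodromy across faces, which is the same unresolved difficulty as in the multiplicativity problem for simplicial volume of bundles; and even for a trivial bundle the product of cycles costs the shuffle factor $\binom{k+2}{2}$ (a factor $6$ for $k=2$), so the loss-free constant $\Vert S_g\Vert=4g-4$ is out of reach by any direct argument, while, as you yourself observe, any loss in the constant destroys the coefficient $3$. A further sanity check: if your lemma held, then $\kappa_n=\Pi_*(e^{n+1})$ would give $\Vert\kappa_n\Vert_\infty\leq (4g-4)2^{-(n+1)}$ for all $n$, settling Morita's 1988 question (Theorem \ref{main0}) in two lines; that this question stood open is strong evidence the lemma is not available.

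What the paper does instead is to replace fiber integration of $e^2$ by a \emph{finite} transfer of a single power of $e$. The zeros of abelian differentials form a subvariety $\Delta\subset P^*\mathcal{C}$ which is a branched cover of degree $2g-2$ over the base (Lemma \ref{branchedcoveringdelta}); since the fiber of the sphere bundle of the Hodge bundle is highly connected, a classifying map $f:B\to \mathcal{M}_g$ lifts (Lemma \ref{lift}), and after a transversality argument the pull-back $\Delta_E$ is a branched multisection of degree $2g-2$ Poincar\'e dual to $c_1(\nu^*)$ (Theorem \ref{branchedpoincare}), whence $\kappa_1=\tau\bigl(c_1(\nu^*)\vert\Delta\bigr)$ by Edmonds' transfer (Theorem \ref{pushforward} with $n=1$). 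Morita's flatness of the vertical cotangent circle bundle makes $c_1(\nu^*)$ a pull-back of the bounded Euler class of ${\rm Top}^+(S^1)$, and summing over the $2g-2$ sheets --- implemented via the central extension of $\mathfrak{S}_{2g-2}\ltimes({\rm Top}^+(S^1))^{2g-2}$ and an explicit cocycle with values in $\{-g+1,\dots,g-1\}$ --- yields $\Vert\kappa_1\Vert_\infty\leq (2g-2)\cdot\tfrac12=g-1$. Note how this shadows your arithmetic $(4g-4)\cdot\tfrac14=g-1$: the paper trades your two norm-lossy operations (cup-squaring and fiber integration) for two norm-controlled ones (restriction to $\Delta$ and a degree-$(2g-2)$ transfer), and it is exactly this device that your proposal is missing.
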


For the proof of Theorem \ref{main} we use that 
the signature $\sigma(E)$ of a surface bundle $\Pi:E\to B$
over a surface can be computed as follows. 
Let us denote by ${\cal M}_g$ the \emph{moduli space of curves of genus $g$}.
The \emph{universal curve} $\Upsilon:{\cal C}\to {\cal M}_g$ is the fiber bundle
over ${\cal M}_g$ whose fiber over a point $x\in {\cal M}_g$ is just the 
Riemann surface $x$. There exists a \emph{classifying map}
$f:B\to {\cal M}_g$ so that $E=f^*{\cal C}$. 

The \emph{tautological ring} of the moduli space
${\cal M}_g$ is the subring of $H^*({\cal M}_g,\mathbb{Q})$ which
is generated by the \emph{Mumford Morita Miller classes}
$\kappa_i({\cal M}_g)\in H^{2i}({\cal M}_g,\mathbb{Q})$.
These classes are non-zero precisely if $i\leq g-2$ \cite{M87,Lo95}.
In particular, we have $\kappa_1\not=0$ for $g\geq 3$.

The tangent bundle $\nu$ of the fibers of the 
universal curve is a holomorphic line bundle on ${\cal C}$,
called the
\emph{vertical tangent bundle} in the sequel.
Thus its first Chern class $c_1(\nu)\in H^2({\cal C},\mathbb{Q})$ 
is defined, and we have
\[\kappa_i=\Upsilon_*(c_1(\nu)^{i+1})\]
where $\Upsilon_*$ is the Gysin map obtained by integration over the fiber.
It then follows from Hirzebruch's signature theorem that
\[3\sigma(E)=c_1(\nu)\cup c_1(\nu)(E)=f^*\kappa_1(B)\]
where we also write $\nu$ for the vertical tangent
bundle of $E$, and we use
naturality of Chern classes under pull-back
(see \cite{H12} for a more detailed discussion). 

As ${\cal M}_g$ is a classifying space (in the orbifold sense) for 
the \emph{mapping class group} ${\rm Mod}(S_g)$ of the surface $S_g$, 
its rational cohomology is just the rational group cohomology $H^*({\rm Mod}(S_g),\mathbb{Q})$ 
of ${\rm Mod}(S_g)$. 
For a countable group $\Gamma$ we can also define the \emph{bounded} cohomology
$H_b^*(\Gamma,\mathbb{Q})$ by requiring that all cocycles and coboundaries 
are bounded functions. 
There is a natural
comparison homomorphism
\[H_b^*(\Gamma,\mathbb{Q})\to H^*(\Gamma,\mathbb{Q})\] which is in general neither injective nor
surjective.

To summarize what is known about the bounded cohomology of the mapping class group
${\rm Mod}(S_g)$, let us consider
the \emph{Hodge bundle}
${\cal H}\to {\cal M}_g$, which is the holomorphic
vector bundle over ${\cal M}_g$ (in the orbifold sense) 
whose fiber over a point $x\in {\cal M}_g$ 
equals the $g$-dimensional vector space of holomorphic one-forms on $x$.
The Hodge bundle is the pull-back of the tautological rank $g$ holomorphic vector
bundle 
over the \emph{moduli space of principally polarized abelian varieties} 
$Sp(2g,\mathbb{Z})\backslash Sp(2g,\mathbb{R})/U(g)$ by the
\emph{Torelli map}. 
The Chern classes of this bundle define
classes in the continuous group cohomology of $Sp(2g,\mathbb{R})$,
and all these classes restrict to \emph{bounded} cohomology 
classes for any lattice in $Sp(2g,\mathbb{R})$ (we refer to \cite{HO12}
for a detailed discussion of this result of Gromov and to more precise information).  
Thus by naturality under pull-back, 
the Chern classes of the Hodge bundle are bounded rational 
cohomology classes for the mapping class group 
${\rm Mod}(S_g)$.

Morita showed that
\[\kappa_{2i-1}=(-1)^i\frac{(2i)!}{B_{2i}}{\rm ch}_{2i-1}({\cal H})\]
where $B_{2i}$ is the $2i$-th Bernoulli number
and ${\rm ch}_{2i-1}({\cal H})$ denotes the component of the \emph{Chern character}
${\rm ch}({\cal H})$ of ${\cal H}$ in the cohomology group $H^{4i-2}({\cal M}_g,\mathbb{Q})$ 
 (relation (R-1) 
in Section 2 of \cite{M87}). As a consequence, the odd Mumford Morita
Miller classes are bounded. However, this reasoning does not extend 
to the even Mumford Morita Miller classes as all Chern classes of 
the Hodge bundle are polynomials in the classes $\kappa_{2i-1}$ $(i\geq 1)$
(see Section 2 of \cite{M87} for a proof).

Answering a question of Morita \cite{M88}, we show

\begin{theo}\label{main0}
All Mumford Morita Miller classes are bounded. 
\end{theo}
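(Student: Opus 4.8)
The plan is to prove boundedness of every $\kappa_i$ uniformly, using the formula $\kappa_i=\Upsilon_*(c_1(\nu)^{i+1})$ recalled above rather than the Hodge bundle (which, as noted, only reaches the odd classes). Two facts are needed: that $c_1(\nu)$ is itself a bounded class on the universal curve, and that the Gysin map $\Upsilon_*$ carries bounded classes to bounded classes. Granting both, the conclusion is immediate: bounded cohomology classes form a subring under cup product, so $c_1(\nu)^{i+1}$ is bounded for every $i$, and applying $\Upsilon_*$ produces a bounded representative of $\kappa_i$.

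First I would establish that $c_1(\nu)\in H^2(\mathrm{Mod}(S_{g,*}),\mathbb Q)$ is bounded. The cohomology of the universal curve ${\cal C}$ is the group cohomology of the mapping class group $\mathrm{Mod}(S_{g,*})$ of the once-marked surface, which fits into the Birman exact sequence $1\to\pi_1(S_g)\to\mathrm{Mod}(S_{g,*})\to\mathrm{Mod}(S_g)\to1$. By Dehn--Nielsen--Baer this group acts on the Gromov boundary $\partial\pi_1(S_g)\cong S^1$ of the (hyperbolic) fiber group by orientation-preserving homeomorphisms. Pulling back the bounded Euler class of $\mathrm{Homeo}^+(S^1)$ --- bounded by the Milnor--Wood inequality --- along this action yields a bounded class that coincides, up to sign, with the Euler class $c_1(\nu)$ of the vertical tangent bundle. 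Equivalently, the fiberwise hyperbolic structures supplied by Teichm\"uller theory exhibit $c_1(\nu)$ as a bounded Euler class in the sense of Gromov. This step is essentially classical.

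The crux is to show that fiber integration preserves boundedness. Via the Birman sequence, $\Upsilon_*\colon H^{k}(\mathrm{Mod}(S_{g,*}))\to H^{k-2}(\mathrm{Mod}(S_g))$ is the edge homomorphism of the Hochschild--Serre spectral sequence landing in $H^{k-2}(\mathrm{Mod}(S_g),H^2(\pi_1(S_g)))=H^{k-2}(\mathrm{Mod}(S_g),\mathbb Q)$; concretely it is cap product against the fundamental class $[S_g]\in H_2(\pi_1(S_g))$. The decisive point is that $[S_g]$ has a fixed fundamental cycle of finite $\ell^1$-norm (its Gromov norm is $4g-4$). I would therefore build, on the level of the bar resolution, an explicit fiber-integration cochain map that pairs a cocycle on $\mathrm{Mod}(S_{g,*})$ with this fixed finite cycle, combining fiber and base directions through an Eilenberg--Zilber shuffle. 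Because the fiber cycle is fixed and the shuffle count in each degree is finite, this map has finite operator norm with respect to the supremum norm, so it sends bounded cocycles to bounded cocycles; evaluating it on $c_1(\nu)^{i+1}$ gives the desired bounded representative of $\kappa_i$.

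The main obstacle I anticipate is precisely the rigorous construction and identification of this bounded fiber-integration map: one must verify that the shuffle construction is a genuine cochain map, that it computes the spectral-sequence edge homomorphism (hence the geometric Gysin map $\Upsilon_*$ and the stated formula for $\kappa_i$), and that it is compatible with the comparison homomorphism to ordinary cohomology --- all carried out equivariantly for the $\mathrm{Mod}(S_g)$-action and in the orbifold setting. Naive fiber integration is defined only on ordinary cohomology and does not obviously descend to bounded cochains, so establishing this compatibility, together with the uniform $\ell^1$-bound on the fiber cycle, is where the real work lies; once it is in place, boundedness of all Mumford--Morita--Miller classes --- in particular the even ones inaccessible through the Hodge bundle --- follows at once.
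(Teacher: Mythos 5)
Your first step is sound and is exactly the ingredient the paper also relies on: by Morita (Theorem 4-3 of \cite{M88}) the circle subbundle of the vertical tangent bundle of ${\cal C}\to{\cal M}_g$ is flat, so $c_1(\nu)$ is pulled back from the bounded Euler class of ${\rm Top}^+(S^1)$ and is bounded, and powers of bounded classes are bounded. The gap is your second step, and it is not a technicality to be checked but the fundamental obstruction. At the level of the bar resolution for the Birman extension $1\to\pi_1(S_g)\to{\rm Mod}(S_{g,1})\to{\rm Mod}(S_g)\to 1$, ``pairing with a fixed fundamental cycle of the fiber'' is not a chain map: a fixed fundamental cycle $z\in C_2(\pi_1(S_g))$ of finite $\ell^1$-norm is not invariant under the monodromy action of ${\rm Mod}(S_g)$, only invariant up to boundary, $\phi_*z-z=\partial b(\phi)$. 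To turn your shuffle construction into a genuine cochain map computing the edge homomorphism you must insert the filling chains $b(\phi)$ (and higher coherence chains for tuples of mapping classes), and the operator norm of the resulting fiber integration is controlled by $\sup_{\phi}\Vert b(\phi)\Vert_1$ over all of ${\rm Mod}(S_g)$ --- for which no uniform bound is available. This is exactly where non-amenability of the fiber enters: for amenable fibers one can average (Gromov's mapping theorem), and for \emph{finite} covers the transfer is an average over a uniformly bounded number of sheets, which is why it preserves boundedness; a surface fiber enjoys neither property, and naive fiber integration is known not to descend to bounded cochains. Had your construction worked as stated, boundedness of all $\kappa_i$ would have followed in a few lines from Morita's 1988 result; circumventing precisely this failure is the content of the paper.

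The paper's route is to replace the $2$-dimensional fiber of the Gysin map by a $0$-dimensional one. Pulling the universal curve back to (the sphere bundle over) the projectivized Hodge bundle ${\cal P}$, the zeros of abelian differentials form a subvariety $\Delta\subset P^*{\cal C}$ for which $\Pi\vert\Delta:\Delta\to{\cal P}$ is a finite branched covering of degree $2g-2$; the key geometric input (Theorem \ref{branchedpoincare}) is that the pull-back of $\Delta$ to a surface bundle $E\to M$ is a branched multisection Poincar\'e dual to $c_1(\nu^*)$, whence $(P\circ\Xi)^*\kappa_n=(-1)^{n+1}\tau(c_1(\nu^*)^n\vert\Delta)$ (Theorem \ref{pushforward}), with $\tau$ Edmonds's transfer --- a finite average, which manifestly preserves boundedness. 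The class is then rewritten on the bundle of divisors ${\cal D}={\cal Z}/\mathfrak{S}_{2g-2}$ via the Chern character of the bundle ${\cal W}$, whose components are $\mathfrak{S}_{2g-2}$-invariant polynomials in the bounded classes $\psi_i^*c_1(\nu)$ (Corollary \ref{bounded}). So your instinct --- combine boundedness of $c_1(\nu)$ with some transfer --- is right, but the only transfer available in bounded cohomology is the finite-degree one, and manufacturing a finite-degree substitute for $\Upsilon_*$ out of the divisor of zeros of abelian differentials is the missing idea your outline does not supply.
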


Some evidence for this result was given by Morita \cite{M88} who showed that
the pull-backs of the classes $\kappa_i$ by a map
$f:M\to {\cal M}_g$ from 
a closed manifold $M$ all vanish 
if the image $f_*\pi_1(M)$ of the fundamental
group of $M$ in ${\rm Mod}(S_g)$
is amenable. This is a property which holds true for
bounded cohomology classes. 
A different proof of Morita's result is due to Bowden \cite{Bw17}. 

By an Euler characteristic computation of Harer and Zagier \cite{HZ86},
we know that
as $g\to \infty$, the tautological ring
is only a very small part of $H^*({\cal M}_g,\mathbb{Q})$. 
Morita conjectured (p.255 of \cite{M88})
that the entire rational cohomology of ${\cal M}_g$ 
consists of bounded classes. In view of more recent insight into bounded cohomology 
(see \cite{HO12}  for information relevant to this circle of ideas), we are instead tempted 
to ask the following

\begin{question}\label{boundedtaut}
Is it true that the image of the comparison map 
$H^*_b({\rm Mod}(S_g),\mathbb{Q})\to H^*({\rm Mod}(S_g),\mathbb{Q})$ coincides with the
tautological subring of $H^*({\rm Mod}(S_g),\mathbb{Q})$? 
\end{question}

The only evidence in this direction, however, is given by the analogy with locally symmetric
spaces. The work \cite{BF} which shows that the kernel of the comparison map
$H_b^2({\rm Mod}(S_g),\mathbb{Q})\to H^2({\rm Mod}(S_g),\mathbb{Q})$
is infinitely generated and which is
based on what was later called acylindrical
hyperbolicity of the mapping class group, 
may be seen as evidence towards the conjecture of Morita.
Applications of second bounded cohomology
of ${\rm Mod}(S_g)$ with nontrivial coefficients to rigidiy questions can be found in \cite{H08}.  

The strategy for the proof of Theorem \ref{main} and Theorem \ref{main0}
consists in
finding an alternative description
of the Mumford Morita Miller classes. Namely, 
the \emph{projective Hodge bundle} $P:{\cal P}\to {\cal M}_g$ 
is the holomorphic fiber bundle whose fiber over a complex curve $x$ is
the projectivization of the fiber of the Hodge bundle over $x$.
The projective Hodge bundle is a smooth quasi-projective variety
(in the orbifold sense). 
Let us consider the bundle $P^*{\cal C}\to {\cal P}$ 
whose fiber over a projective abelian differential $q$ is the Riemann surface $Pq$.
The zeros of a projective abelian differential $q$ define an 
effective divisor of degree $2g-2$ on the Riemann surface
$Pq$ which is dual to the cotangent bundle of $Pq$. 
We show in Section \ref{acomplex} that the union 
$\Delta$ of the supports of these divisors is a complex subvariety
of $P^*{\cal C}$, and the canonical
projection $\Delta\to {\cal P}$ is a branched cover of degree $2g-2$. 
The branch locus consists precisely of the zeros of differentials in ${\cal P}$ of order at least $2$.

Denote by $\nu^*$ the \emph{vertical cotangent bundle}
of the universal curve 
${\cal C}\to {\cal M}_g$ or its pull-back to ${\cal P}$, dual
to the vertical tangent bundle $\nu$.
Morita \cite{M88} showed that the
circle subbundle of this bundle is flat.
This implies the following. Let us denote by
${\rm Top}^+(S^1)$ the
group of orientation preserving homeomorphisms of the circle, and by  
${\rm Mod}(S_{g,1})$  the orbifold fundamental group of the universal curve. 
There is a homomorphism
$\rho:{\rm Mod}(S_{g,1})\to {\rm Top}^+(S^1)$ 
such that $c_1(\nu^*)$ equals the pull-back by $\rho$ of the bounded
Euler class of ${\rm Top}^+(S^1)$. In particular, $c_1(\nu^*)$ is bounded.

This information is used as follows. We note
in Section \ref{branchedmultisection} that the subvariety 
$\Delta\subset P^*{\cal C}$ can be used to construct for each surface bundle
$E\to M$ a cycle which is 
Poincar\'e dual to $c_1(\nu^*)$. This allows to evaluate  
the pull-back to ${\cal P}$ of the Mumford Morita Miller 
classes using the transfer map in rational cohomology 
defined by the branched cover $\Delta\to {\cal P}$
rather than the Gysin homomorphism. On the other hand, $\Delta$ can
be viewed as a section of the bundle over ${\cal P}$ whose fiber over $q$ is the space
of effective divisors of degree $2g-2$ in $Pq$.
This bundle of divisors in turn is the 
quotient of the $2g-2$-fold fiber product of $P^*{\cal C}$ by the symmetric group
$\mathfrak{S}_{2g-2}$ which allows for viewing the Mumford Morita Miller 
classes as defined by the Chern character of the vertical virtual tangent bundle of the bundle of divisors.
This reinterpretation then leads to Theorem \ref{main0}
in Section \ref{mmm}, and Theorem \ref{main} is established in
Section \ref{cocycle}.

\bigskip
\noindent
{\bf Acknowledgement:} I am grateful to Matteo Costantini 
and Daniel Huybrechts for useful discussions.

\section{A complex vector bundle over the projectivized Hodge bundle}
\label{acomplex}

Let $S_g$ and $S_{g,1}$ be a closed surface of genus $g\geq 2$ and a closed 
surface of genus $g\geq 2$ with a single marked point, respectively. 
The moduli space ${\cal M}_g$ of curves of 
genus $g\geq 2$ is a complex orbifold. 
The moduli space 
of genus $g$ curves  with a single marked point
(puncture) is the 
\emph{universal curve} 
\[\Upsilon:{\cal C}\to {\cal M}_g,\] 
a fiber bundle (in the orbifold sense)
over ${\cal M}_g$ 
whose fiber over the point $X\in {\cal M}_g$ 
is just the complex curve $X$. 

The moduli spaces ${\cal M}_g$ and ${\cal C}$ are quotients of the
\emph{Teichm\"uller spaces} ${\cal T}_g$ and 
${\cal T}_{g,1}$ of \emph{marked} complex curves of genus $g$ and of 
marked complex curves 
of genus $g$ with one marked point, respectively, under the 
mapping class groups
${\rm Mod}(S_g)$ of $S_g$ and ${\rm Mod}(S_{g,1})$ of $S_{g,1}$.
The marked point forgetful map induces a surjective (see \cite{FM12} 
for a proof) homomorphism
$\Theta:{\rm Mod}(S_{g,1})\to {\rm Mod}(S_g)$. This homomorphism fits into the
\emph{Birman exact sequence}
\begin{equation}
 \label{birman}
1\to \pi_1(S_g)\to {\rm Mod}(S_{g,1})\xrightarrow{\Theta} {\rm Mod}(S_g)\to 1.
\end{equation}

The \emph{Hodge bundle} 
is the holomorphic
(orbifold) vector bundle ${\cal H}\to {\cal M}_g$
whose fiber over a Riemann surface $X$ equals the
$g$-dimensional complex vector space of
holomorphic one-forms on $X$.
The moduli space of abelian differentials for $S_g$
is the complement of the zero section in ${\cal H}$. 
Its quotient under the natural action of $\mathbb{C}^*$ by scaling is the 
projective Hodge bundle
\[P:{\cal P}\to {\cal M}_g\]
which is a smooth complex orbifold. 
By this we mean the quotient of a complex manifold by a finite
group of biholomorphic automorphisms. It also is a quasi-projective
variety. 
The goal of this section is to establish some first properties of the 
projective Hodge bundle and the locus $\Delta$ of the zeros of
the projective abelian differentials in the pull-back
$P^*{\cal C}\to {\cal P}$.
 



A holomorphic one-form on a Riemann surface $X$ of genus $g$ 
has precisely $2g-2$ zeros
counted with multiplicity. Each stratum of ${\cal P}$ of 
projective differentials with the same number
und multiplicities of zeros is a smooth complex orbifold. 
Strata need not be connected, and they
are locally closed, but in general not closed subsets
of ${\cal P}$. We denote by $\overline{\cal Q}$ the set theoretic closure of 
a component ${\cal Q}$ of a stratum. This is a subvariety of ${\cal P}$ which 
is a union of components of strata. 

For $k\geq 0$ define ${\cal P}(k)$ to be the union of the strata of differentials
with precisely $2g-2-k$ zeros. Then ${\cal P}(k)$ is a suborbifold of
${\cal P}$ of codimension $k$, and these suborbifolds 
give ${\cal P}$ the structure of a complex stratified
space in the sense of the following 

\begin{definition}\label{stratified}
A \emph{stratification} of a smooth complex orbifold  $X$  is a decomposition
$X=\cup_{i=0}^mX(i)$ where for each $i$, the set $X(i)$ is a locally closed
smooth suborbifold of $X$ of codimension $i$ and such that
\[\overline{X(k)}=\cup_{i=k}^m X(i).\]
\end{definition}

Consider the pull-back 
\[\Pi:P^*{\cal C}\to {\cal P}\] 
of the universal curve ${\cal C}$ to ${\cal P}$. This is a smooth complex
orbifold, foliated into the fibers of the fibration $\Pi$. The fiber of 
$P^*{\cal C}$ over a projective differential $q$ is the Riemann surface $P(q)$.
%
Denote by
\[\Delta\subset P^*{\cal C}\]
the closed subset of $P^*{\cal C}$ whose intersection with a fiber
over a projective abelian differential $q$ consists of the zeros of $q$.

We next collect some information on the set $\Delta$.
To this end call a closed 
subvariety $Y$ of codimension one of a smooth complex 
variety $X$ a \emph{local complete intersection}
if the ideal sheaf ${\cal F}_Y$ of $Y$ in $X$
can be locally generated by a single element at every point (see p.185 of \cite{Ha77}). 


\begin{proposition}\label{localcomplete}
  $\Delta$ is a local complete intersection
  subvariety of $P^*{\cal C}$. 
\end{proposition}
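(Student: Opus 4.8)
The plan is to realize $\Delta$ as the zero locus of a global holomorphic section of a line bundle on the smooth orbifold $P^*{\cal C}$; once that is set up, local principality of the ideal sheaf is almost immediate. First I would construct the section. Over ${\cal P}$ there is a tautological line bundle ${\cal O}_{\cal P}(-1)$ whose fiber over a projective differential $q=[\omega]\in\mathbb{P}({\cal H}_X)$ is the line $\mathbb{C}\cdot\omega\subset H^0(X,\Omega^1_X)$. On $P^*{\cal C}$ the vertical cotangent bundle $\nu^*$ is a holomorphic line bundle whose fiber at $(x,q)$ with $x\in P(q)=X$ is the cotangent line $(\Omega^1_X)_x$. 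Evaluation of a holomorphic one-form at a point is a fiberwise linear map $H^0(X,\Omega^1_X)\to(\Omega^1_X)_x$; restricting it to the tautological line and letting $(x,q)$ vary yields a bundle homomorphism $\Pi^*{\cal O}_{\cal P}(-1)\to\nu^*$, equivalently a global holomorphic section
\[ s\in H^0\!\left(P^*{\cal C},\,L\right),\qquad L=\nu^*\otimes\Pi^*{\cal O}_{\cal P}(1).\]
By construction $s$ vanishes at $(x,q)$ exactly when $\omega(x)=0$, so the zero set of $s$ is precisely $\Delta$. Since evaluation is canonical, $s$ is invariant under the finite groups defining the orbifold charts and descends to a well defined orbifold section, so everything below may be checked on a smooth local manifold chart.

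Next I would read off the two required properties on such a chart. Trivializing $L$ turns $s$ into a single holomorphic function $F$, and $\Delta=\{F=0\}$ locally. Restricted to a fixed fiber $P(q)$ the section $s$ is the nonzero one-form $\omega$, which has exactly $2g-2$ zeros counted with multiplicity; hence $\Delta$ meets every fiber in a nonempty finite set and therefore has pure codimension one. Local principality is where the tautological section pays off: the ideal sheaf of the zero scheme $\{s=0\}$ is generated locally by the single element $F$, which is exactly the local complete intersection condition.

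The one genuine subtlety, and the step I expect to cost the most care, is reconciling the reduced subvariety $\Delta$ with the scheme $\{s=0\}$ at the branch locus $\Delta\to{\cal P}$, i.e.\ at points where $\omega$ has a zero of order at least $2$: there $s$ vanishes to higher order in the fiber direction, so a priori the radical ideal of the reduced $\Delta$ could fail to be principal. I would resolve this in one of two equivalent ways. The clean algebraic way: the local ring of the smooth variety $P^*{\cal C}$ is regular, hence a unique factorization domain, and in a UFD the radical of a principal ideal $(F)$ is again principal, generated by the square-free part of $F$; thus the ideal sheaf of the reduced $\Delta$ remains locally generated by one element. The concrete geometric way, which also shows $F$ is already square-free so that $\{s=0\}$ is in fact reduced, is to observe that the open stratum ${\cal P}(0)$ of differentials with $2g-2$ simple zeros is dense, and along $\Delta\cap\Pi^{-1}({\cal P}(0))$ the section $s$ vanishes transversally; a repeated prime factor of $F$ would force non-reduced behavior on this dense locus, a contradiction. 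Either route yields that $\Delta$ is a local complete intersection subvariety of codimension one, completing the argument.
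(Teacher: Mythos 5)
Your proof is correct, and its core mechanism is the same as the paper's: exhibit $\Delta$ as the vanishing locus of the fiberwise evaluation map, so that its ideal sheaf is locally principal. The implementations differ in a way worth noting. The paper works $\mathbb{C}^*$-equivariantly upstairs: it pulls the universal curve back to the complement ${\cal H}_*$ of the zero section of the Hodge bundle, defines the evaluation $\theta(q,z)=q(z)$ with values in $\nu^*$, takes the preimage $V$ of the zero section, and descends to $P^*{\cal C}$ using $\mathbb{C}^*$-equivariance together with local cross sections of the fibration ${\cal W}\to P^*{\cal C}$ to produce a local defining function for $\Delta$. You instead absorb the $\mathbb{C}^*$-action into a twist, obtaining a single global holomorphic section $s$ of $\nu^*\otimes\Pi^*{\cal O}_{\cal P}(1)$ on $P^*{\cal C}$ itself; this yields the local equation in one step, with no equivariant descent or choice of cross section, and is arguably the cleaner packaging. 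More substantively, you make explicit a point the paper passes over in silence: the local complete intersection condition concerns the ideal sheaf of the \emph{reduced} variety $\Delta$, whereas the natural principal ideal $(F)$ cuts out the zero scheme of $s$, and the two could a priori differ where $s$ vanishes to higher fiberwise order. Your algebraic patch is airtight, since the local rings of a smooth complex variety are unique factorization domains and the radical of a principal ideal in a UFD is principal. Your geometric alternative (that $F$ is in fact square-free) also works, but needs one small supplement: a repeated prime factor of $F$ lives on some irreducible component of $\Delta$, and you must know that every such component meets $\Pi^{-1}({\cal P}(0))$, where $s$ vanishes to first order. This follows because each component of the hypersurface $\Delta$ has pure codimension one and maps onto the irreducible ${\cal P}$ under the finite map $\Pi\vert\Delta$; the density of simple zeros used here is exactly the zero-splitting fact from Eskin--Masur--Zorich that the paper invokes in Lemma \ref{branchedcoveringdelta}.
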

\begin{proof}
The Hodge bundle
  $P_0:{\cal H}\to {\cal M}_g$ and the universal curve 
  $\Upsilon:{\cal C}\to {\cal M}_g$ are both fiber bundles
  over the same complex variety ${\cal M}_g$.
 Let ${\cal H}_*\subset {\cal H}$ be the complement of the zero section
of ${\cal H}$. Then the pull-back  
$P_0^*{\cal C}$ of ${\cal C}$ to ${\cal H}_*$
can be identified with the fiber 
product
\[{\cal W}=\{(q,z)\in {\cal H}_*\times {\cal C}\mid P_0(q)=\Upsilon(z)\}.\]
This is a holomorphic fiber bundle over ${\cal M}_g$.

The vertical cotangent bundle $\nu^*$ of the universal curve ${\cal C}\to {\cal M}_g$,
that is, the fiberwise cotangent bundle, is a holomorphic line bundle on ${\cal C}$.
By the definition of the complex structure on
${\cal H}$, the evaluation map $\theta:{\cal W}\to \nu^*$, defined by
$\theta(q,z)=q(z)$, is holomorphic, moreover
it is surjective since a linear system on a complex curve of genus $g\geq 2$ is base point free.
Thus the preimage of the
zero section of $\nu^*$ under this evaluation map is a complex subvariety $V$ of ${\cal W}$ 
of codimension one. 

The variety $V$ is invariant under the free holomorphic 
action of $\mathbb{C}^*$ on ${\cal W}$ by scaling the first coordinate and hence it 
descends to a complex
subvariety $\mathbb{C}^*\backslash V$ 
of codimension one in the quotient of ${\cal W}$ by this action, which is 
the fiber product of ${\cal P}$ with ${\cal C}$. This
fiber product is naturally biholomorphic to 
the pull-back $P^*{\cal C}\to {\cal P}$, and by this identification,
the subvariety $\mathbb{C}^*\backslash V$ is 
mapped to the
set $\Delta$.  We conclude that indeed,
$\Delta$ is a complex subvariety of $P^*{\cal C}$ of codimension one.

To show that $\Delta$ is a local complete intersection, 
observe that since $\nu^*$ is a holomorphic line bundle
on ${\cal C}$, the zero section of $\nu^*$ is locally the zero set of
a single holomorphic function.  Then locally the subvariety $V$ of 
${\cal W}$ is the zero set of the composition
of the holomorphic map $\theta$ with 
a holomorphic function on $\nu^*$ and hence $V$ is a local complete intersection.

As the map $\theta$ is equivariant with respect to action of $\mathbb{C}^*$ 
on ${\cal W}$ and the holomorphic action of $\mathbb{C}^*$ on $\nu^*$ 
by scaling, by taking a local cross section of the holomorphic 
fibration ${\cal W}\to P^*{\cal C}$ and restricting a local defining holomorphic
function for $V$ 
to this local cross section, we obtain that 
$\Delta$ is a local complete intersection 
as claimed in the proposition.
\end{proof}

As a corollary we obtain

\begin{corollary}\label{cohenmac}
  The variety $\Delta\subset P^*{\cal C}$ is
  Cohen-Macaulay.
\end{corollary}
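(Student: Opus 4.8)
The plan is to deduce the corollary from the standard fact that a local complete intersection in a smooth variety is Cohen--Macaulay, reading this statement locally on the uniformizing charts of the orbifold $P^*{\cal C}$. Recall that $P^*{\cal C}$ is a smooth complex orbifold, so by definition it is locally the quotient of a smooth complex manifold $U$ by a finite group of biholomorphisms. The local rings of such a manifold $U$ are regular local rings, and a regular local ring is Cohen--Macaulay. Thus it suffices to understand the preimage of $\Delta$ in such a chart $U$.

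On $U$, Proposition \ref{localcomplete} tells us that the preimage of $\Delta$ is cut out near each point by a single holomorphic function $f$. Since $\Delta$ has pure codimension one and the ambient regular local ring $R={\cal O}_{U,x}$ is an integral domain, the nonzero, nonunit element $f$ is a nonzerodivisor in $R$. Hence the local ring of $\Delta$ at the point is $R/(f)$, and I would invoke the standard commutative algebra fact (compare the discussion of complete intersections in \cite{Ha77}) that the quotient of a Cohen--Macaulay local ring by a nonzerodivisor is again Cohen--Macaulay, with depth and Krull dimension each dropping by one. This yields that $R/(f)$ is Cohen--Macaulay of dimension $\dim R-1$, which is exactly the Cohen--Macaulay property for $\Delta$ on the chart $U$.

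It remains to pass from the uniformizing charts back down to $\Delta$ itself, and this is the only point requiring a little care. Since we work in characteristic zero, the ring of invariants of a Cohen--Macaulay ring under the action of a finite group is again Cohen--Macaulay, so the property established upstairs descends to the quotient; equivalently, one verifies the Cohen--Macaulay property of the orbifold $\Delta$ directly on its local uniformizing charts, where we have just checked it. The main technical point to watch is the verification that $f$ is a nonzerodivisor, which relies precisely on $\Delta$ being of pure codimension one, as established in Proposition \ref{localcomplete}.
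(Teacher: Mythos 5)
Your proposal is correct and follows essentially the same route as the paper: both arguments reduce the corollary, via Proposition \ref{localcomplete}, to the standard fact that a local complete intersection in a smooth complex variety is Cohen--Macaulay, which the paper simply cites (Proposition 8.23 of \cite{Ha77}) after passing to a finite orbifold cover, and which you reprove by hand (regular local ring is a Cohen--Macaulay domain, and the quotient by the nonzerodivisor $f$ stays Cohen--Macaulay). The only departure is cosmetic: where the paper works upstairs on a finite cover, you additionally descend the Cohen--Macaulay property through the finite quotient by invariant theory in characteristic zero, a step that is valid but not needed for the paper's formulation.
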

\begin{proof}
  By passing to a finite orbifold cover we may assume that
  $P^*{\cal C}$ is smooth.
  By Proposition 8.23 of \cite{Ha77}, a 
 local complete intersection
 subvariety of a smooth variety over $\mathbb{C}$ 
 is Cohen-Macaulay.  Thus the corollary
 follows from Proposition \ref{localcomplete}.
\end{proof}

As we now have more precise information on the complex variety $\Delta$, 
we can study the finite morphism $\Pi\vert \Delta:\Delta\to {\cal P}$. 
The following statement serves as an illustration
of this idea, but it is not used in the sequel.
The notion of a holomorphic vector bundle is
meant in the orbifold sense:
There is a holomorphic vector bundle on a finite manifold cover of ${\cal P}$, with 
a natural action by bundle automorphisms
of the finite group of biholomorphic automorphisms which give rise to 
${\cal P}$.    

\begin{proposition}\label{miracle}
  The push-forward 
 of the restriction to $\Delta$ of any 
  holomorphic line bundle $L\to P^*{\cal C}$  
by the finite morphism $\Pi^\Delta=\Pi\vert \Delta$ 
is a holomorphic vector bundle
$\Pi^\Delta_!(L)\to {\cal P}$ of rank $2g-2$.
\end{proposition}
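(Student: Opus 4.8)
The plan is to deduce the statement from flatness of the finite morphism $\Pi^\Delta:\Delta\to {\cal P}$. Once we know that $\Pi^\Delta$ is finite and \emph{flat}, the conclusion is essentially formal: the restriction $L\vert\Delta$ is a holomorphic line bundle, that is, a locally free sheaf of rank one on $\Delta$, and for a finite flat morphism the direct image of a locally free sheaf is again locally free, with rank equal to the product of the rank of the sheaf and the degree of the morphism. Since the generic projective differential has $2g-2$ distinct simple zeros, the finite morphism $\Pi^\Delta$ has degree $2g-2$, so $\Pi^\Delta_!(L\vert\Delta)$ is locally free of rank $(2g-2)\cdot 1=2g-2$, as asserted.

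The heart of the matter is therefore the flatness of $\Pi^\Delta$, which is not automatic: a finite morphism onto a smooth base from a possibly singular source need not be flat. This is exactly the point where the Cohen--Macaulay property established in Corollary \ref{cohenmac} enters, and I expect it to be the only step requiring genuine input rather than formal manipulation. As in that corollary, I would first pass to a finite orbifold cover so that ${\cal P}$ is smooth, hence regular. The morphism $\Pi^\Delta$ is finite and surjective, since every projective abelian differential has precisely $2g-2$ zeros counted with multiplicity; in particular its fibers are zero-dimensional and $\dim\Delta=\dim{\cal P}$. Thus $\Pi^\Delta$ is equidimensional, with source Cohen--Macaulay and target regular, and the local criterion for flatness (miracle flatness) forces $\Pi^\Delta$ to be flat.

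With flatness in hand I would conclude as follows. Coherence of $\Pi^\Delta_!(L\vert\Delta)$ is immediate from finiteness of $\Pi^\Delta$. To see local freeness, note that $L\vert\Delta$ is locally free over ${\cal O}_\Delta$ and that ${\cal O}_\Delta$ is flat over ${\cal O}_{\cal P}$ by flatness of $\Pi^\Delta$; composing, $L\vert\Delta$ is flat over ${\cal O}_{\cal P}$, so its direct image is a coherent sheaf on the regular variety ${\cal P}$ which is flat, hence locally free. Its rank is computed on a generic fiber, where it equals $2g-2$. Finally, since the entire construction---the subvariety $\Delta$, the line bundle $L$, and the morphism $\Pi^\Delta$---is equivariant under the finite group of biholomorphic automorphisms defining the orbifold ${\cal P}$, the resulting vector bundle descends to a holomorphic orbifold vector bundle of rank $2g-2$ on ${\cal P}$. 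Everything beyond the flatness verification is a standard property of finite flat morphisms.
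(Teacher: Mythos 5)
Your proposal is correct and follows essentially the same route as the paper: flatness of $\Pi^\Delta$ via miracle flatness applied to the Cohen--Macaulay source (Corollary \ref{cohenmac}) and the smooth target ${\cal P}$, followed by the standard fact that the push-forward of a locally free sheaf under a finite flat morphism is locally free of rank equal to the degree, here $2g-2$. Your additional remarks on equidimensionality, computing the rank on a generic fiber, and orbifold equivariance only make explicit what the paper leaves implicit.
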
  
\begin{proof}
The miracle flatness theorem Theorem 23.1 of \cite{Ma} (see also
Corollary 18.1 of \cite{Ei}) 
shows that  
  a finite morphism $\Pi^\Delta$ from a
  Cohen-Macaulay variety onto a smooth variety ${\cal P}$ is flat.
  Since the restriction of $\Pi$ to $\Delta$ is clearly finite
  and ${\cal P}$ is smooth, flatness now follows from Corollary \ref{cohenmac}.

  On the other hand, by Proposition 9.2.e of \cite{Ha77}, 
  the push-forward of a locally free sheaf under a finite
  flat morphism is locally free. Since the degree of the finite morphism 
$\Pi^\Delta$ equals $2g-2$, together this yields the proposition.
\end{proof}

The Chern character of a complex vector bundle is defined by
the power series 
\[\exp(t)=\sum_k \frac{t^k}{k!}.\]
Thus if $L\to M$ is any complex line bundle, then the 
Chern character of $L$ is given by
\[{\rm ch}(L)=\sum_k \frac{1}{k!}c_1(L)^k.\]

The Grothendieck Riemann Roch theorem
(see Theorem 18.3 of \cite{Fu84})
enables us to compute the 
Chern character ${\rm ch}(\Pi^\Delta_!({\cal O}_\Delta))$ 
of the push-forward of the trivial line bundle
$\Pi^\Delta_!({\cal O}_\Delta)$.
In its formulation, ${\rm td}(\Delta)$ is the Todd class of the
virtual tangent sheaf of $\Delta$ which is defined as follows.

The normal bundle $N(\Delta)$ of $\Delta$ is a smooth holomorphic 
line bundle on the regular part of $\Delta$. 
It can be extended to a holomorphic
line bundle on all of $\Delta$ by taking the quotient of the ideal sheaf of 
$\Delta$ by its square. 
The virtual tangent sheaf of $\Delta$ 
is then defined by 
$T_\Delta=TP^*{\cal C}\vert \Delta -N(\Delta)$, taken in the Grothendieck group 
$K_0(\Delta)$.
The following is a consequence of Corollary 18.3.1 of \cite{Fu84}. 

\begin{theorem}\label{grr}
Let $L\to P^*{\cal C}$ be a holomorphic line bundle; then 
\[{\rm ch}(\Pi^\Delta_!({\cal O}_ {\Delta}))\cdot {\rm td}({\cal P})=
\Pi^\Delta_*( {\rm td}(\Delta)).\]
\end{theorem}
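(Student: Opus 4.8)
The plan is to recognize the claimed identity as the Grothendieck--Riemann--Roch formula of Corollary 18.3.1 in \cite{Fu84}, applied to the finite morphism $\Pi^\Delta:\Delta\to {\cal P}$ and to the class $\alpha=[{\cal O}_\Delta]$ in the Grothendieck group of coherent sheaves on $\Delta$. In its general form, for a proper morphism $f:X\to Y$ the theorem reads
\[{\rm ch}(f_!\alpha)\cdot {\rm td}(T_Y)=f_*\bigl({\rm ch}(\alpha)\cdot {\rm td}(T_X)\bigr),\]
an equality in rational Chow groups which, once $Y$ is smooth, may be read in rational cohomology. Taking $\alpha=[{\cal O}_\Delta]$ we have ${\rm ch}(\alpha)=1$, so the right-hand side collapses to $f_*({\rm td}(T_X))$ and the formula becomes exactly the assertion of the theorem, provided ${\rm td}(T_X)$ is interpreted as the class ${\rm td}(\Delta)={\rm td}(T_\Delta)$ already defined above.

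First I would verify that we lie in the range of applicability. By Proposition \ref{localcomplete} the inclusion $\Delta\hookrightarrow P^*{\cal C}$ is a regular embedding of codimension one, and the fibration $\Pi:P^*{\cal C}\to {\cal P}$ is smooth and proper, its fibres being the compact Riemann surfaces $P(q)$. Hence $\Pi^\Delta$ is a local complete intersection morphism. Its absolute virtual tangent sheaf is the class $T_\Delta=TP^*{\cal C}\vert \Delta-N(\Delta)\in K_0(\Delta)$ introduced before the statement, and its relative virtual tangent is $T_\Delta-(\Pi^\Delta)^*T{\cal P}$. This is exactly what makes ${\rm td}(\Delta)={\rm td}(T_\Delta)$ a well-defined operational class on the singular variety $\Delta$, so that the Todd term on the right-hand side is meaningful even though $\Delta$ is not smooth.

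Next I would unwind the left-hand side. As $\Pi^\Delta$ is finite it is affine, so its higher direct images vanish and $\Pi^\Delta_!({\cal O}_\Delta)=\Pi^\Delta_*({\cal O}_\Delta)$; by the miracle flatness already invoked in Proposition \ref{miracle} this push-forward is a genuine holomorphic vector bundle of rank $2g-2$ on the smooth base ${\cal P}$. Its Chern character ${\rm ch}(\Pi^\Delta_!({\cal O}_\Delta))$ is therefore the ordinary Chern character of a bundle, and the product with ${\rm td}({\cal P})$ on the left is unambiguous. Feeding $\alpha=[{\cal O}_\Delta]$ into the local complete intersection version of Grothendieck--Riemann--Roch and using the projection formula to move the invertible factor ${\rm td}({\cal P})$ from the relative to the absolute side then produces the stated equality.

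The delicate point, and what I regard as the main obstacle, is that $\Delta$ is only Cohen--Macaulay by Corollary \ref{cohenmac} and is not smooth, so the classical Grothendieck--Riemann--Roch theorem for smooth varieties does not apply literally; one must instead invoke Fulton's singular Riemann--Roch together with the local complete intersection structure to justify replacing ${\rm td}(T_X)$ by ${\rm td}(\Delta)$. A secondary and routine issue is the orbifold nature of all the spaces: as in the proof of Corollary \ref{cohenmac} I would pass to a finite manifold cover on which $P^*{\cal C}$ and ${\cal P}$ are smooth, apply the theorem there, and descend the resulting identity to ${\cal P}$ by averaging over the deck group, which is harmless over $\mathbb{Q}$. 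The identical argument with $\alpha=[L\vert \Delta]$ in place of $[{\cal O}_\Delta]$ yields the corresponding formula for any holomorphic line bundle $L\to P^*{\cal C}$.
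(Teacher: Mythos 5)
Your proposal is correct and takes essentially the same route as the paper: the paper's entire proof consists of the citation of Corollary 18.3.1 of \cite{Fu84}, applied with the virtual tangent sheaf $T_\Delta=TP^*{\cal C}\vert \Delta-N(\Delta)$ defined just before the statement, and your verifications (the regular embedding from Proposition \ref{localcomplete} making $\Pi^\Delta$ an lci morphism, $\Pi^\Delta_!({\cal O}_\Delta)=\Pi^\Delta_*({\cal O}_\Delta)$ being locally free by finiteness and miracle flatness, the projection-formula step converting the relative Todd class to ${\rm td}(\Delta)$, and the passage to a finite manifold cover for the orbifold issue) simply supply details the paper leaves implicit. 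You also correctly note that the line bundle $L$ in the statement is vestigial, since the displayed identity does not involve $L$; your closing observation that the same argument with $\alpha=[L\vert \Delta]$ gives the general formula recovers what the statement presumably intended.
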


Given that the singularites of $\Delta$ and the branch locus of 
$\Pi^\Delta$, with multiplicities, 
precisely correspond to the strata of ${\cal P}$, it should be 
possible to compute from the push-forward 
$\Pi_*^\Delta({\cal O}_{\Delta})$ of the restriction to $\Delta$ of the
trivial line bundle on $P^*{\cal C}$ the cohomology classes of the various 
components of strata in the Chow ring of ${\cal P}$. 
We do however not pursue this line of ideas in this article and refer
instead to \cite{BHS20} for more information.

\section{Computing the Poincar\'e dual of $c_1(\nu^*)$}
\label{branchedmultisection}

In this section we begin the topological study of Mumford Morita Miller classes. 
As in Section \ref{acomplex}, we consider the pull-back $\Pi:P^*{\cal C}\to {\cal P}$ of the 
universal curve ${\cal C}\to {\cal M}_g$ to the moduli space 
${\cal P}$ of projective abelian differentials. 
We show that for every surface bundle $E\to M$ over 
a smooth manifold $M$ of dimension $n\leq 2g-2$ we can use 
the hypersurface 
$\Delta\subset P^*{\cal C}$  as in Proposition \ref{localcomplete}
to construct 
a cycle in $E$ which is Poincar\'e dual to $c_1(\nu^*)$ where $\nu^*$
is the vertical cotangent bundle of $E$. We use this to find 
a geometric description of the Mumford Morita Miller classes using transfer
homomorphisms in cohomology. 

Let for the moment $X,Y$ be 
arbitrary connected locally path connected 
Hausdorff spaces and let
$\phi:X\to Y$ be an open and closed continuous map. 
The \emph{degree} of $\phi$ is defined as 
\[{\rm deg}(\phi)=\sup\{\sharp \phi^{-1}(y)\mid y\in Y\},\]
and the \emph{local degree} of $\phi$ at $x\in X$ is defined  
as 
\[{\rm deg}(\phi,x)=\inf_U\sup \{\sharp \phi^{-1}\phi(z)\cap U\mid z\in U\},\]
where $U$ ranges over the neighborhoods of $x$.
The following is taken from \cite{Ed76}.

\begin{definition}\label{finitebranched}
An open and closed continuous 
map $\phi:X\to Y$ is a \emph{finite branched covering} if 
${\rm deg}(\phi)<\infty$ and for each $y\in Y$, 
\[{\rm deg}(\phi)=\sum_{x\in \phi^{-1}(y)} {\rm deg}(\phi,x).\]
\end{definition}

The relevance for our purpose is Theorem 2.1 of \cite{Ed76}.

\begin{theorem}[Edmonds \cite{Ed76}]\label{edmonds} 
Let $f:X\to Y$ be a finite branched covering. Then there is a transfer
homomorphism 
\[\tau:H^*(X,\mathbb{Q})\to H^*(Y,\mathbb{Q})\]
such that $\tau\circ  f^*= {\rm deg}(f)\cdot 1$.
\end{theorem}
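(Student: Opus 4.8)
The plan is to construct the transfer first at the level of sheaves on $Y$ and then pass to cohomology, exploiting that a finite branched covering is in particular a finite map, i.e. proper with finite fibers. First I would consider the constant sheaf $\mathbb{Q}_X$ on $X$ together with its direct image $f_*\mathbb{Q}_X$ on $Y$. Since $f$ is proper with finite (hence zero--dimensional) fibers, the higher direct images $R^qf_*\mathbb{Q}_X$ vanish for $q\geq 1$, and the stalk of $f_*\mathbb{Q}_X$ at a point $y\in Y$ is $H^0(f^{-1}(y),\mathbb{Q})=\bigoplus_{x\in f^{-1}(y)}\mathbb{Q}$. Consequently the Leray spectral sequence for $f$ degenerates and produces a canonical isomorphism $H^*(Y,f_*\mathbb{Q}_X)\cong H^*(X,\mathbb{Q})$.

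Next I would produce a morphism of sheaves ${\rm tr}:f_*\mathbb{Q}_X\to \mathbb{Q}_Y$, the \emph{trace}, defined on stalks by the local--degree--weighted sum ${\rm tr}_y\big((a_x)_{x\in f^{-1}(y)}\big)=\sum_{x\in f^{-1}(y)}{\rm deg}(f,x)\,a_x$. The transfer is then the composition $\tau:H^*(X,\mathbb{Q})\cong H^*(Y,f_*\mathbb{Q}_X)\xrightarrow{{\rm tr}_*}H^*(Y,\mathbb{Q})$. The defining identity becomes transparent already at the sheaf level: the adjunction unit $\mathbb{Q}_Y\to f_*\mathbb{Q}_X$, which induces $f^*$ on cohomology, sends the section $1$ to the constant family $(1)_{x}$, and composing with ${\rm tr}$ multiplies by $\sum_{x\in f^{-1}(y)}{\rm deg}(f,x)={\rm deg}(f)$, the last equality being exactly the defining condition of Definition \ref{finitebranched}. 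Hence ${\rm tr}\circ(\text{unit})={\rm deg}(f)\cdot{\rm id}_{\mathbb{Q}_Y}$, which upon passing to cohomology gives $\tau\circ f^*={\rm deg}(f)\cdot 1$.

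The main obstacle is verifying that the stalkwise formula for ${\rm tr}$ genuinely defines a homomorphism of sheaves, that is, that it is compatible with the restriction maps of $f_*\mathbb{Q}_X$ across the branch locus. Away from the branch locus $f$ is an honest covering, all local degrees equal $1$, and ${\rm tr}$ reduces to the classical summation transfer, so compatibility is clear there. The content is therefore purely local near a branch point $x_0$: as a nearby fiber $f^{-1}(y')$ degenerates toward $f^{-1}(y)$, several sheets $x'_1,\dots,x'_k$ collapse onto $x_0$, and one must establish the conservation of local degree $\sum_j{\rm deg}(f,x'_j)={\rm deg}(f,x_0)$. This additivity is precisely what the local structure theory of finite branched coverings in \cite{Ed76} provides: the restriction of $f$ to a suitable neighborhood of $x_0$ is again a finite branched covering onto a neighborhood of $f(x_0)$ of global degree ${\rm deg}(f,x_0)$, so applying the defining equation of Definition \ref{finitebranched} to this local covering yields the required conservation. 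Granting this local additivity, a section of $f_*\mathbb{Q}_X$ over a small connected neighborhood is constant along each sheet, the weighted sum is seen to commute with restriction, and ${\rm tr}$ is a well-defined sheaf map, so that the whole construction goes through.
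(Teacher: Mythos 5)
The paper offers no proof of this statement: it is imported wholesale as Theorem 2.1 of \cite{Ed76}, so the comparison is with Edmonds' original argument. Your sheaf-theoretic construction is essentially correct and takes a genuinely different route. Edmonds, in essence, shows that the multiplicity function $y\mapsto \sum_{x\in f^{-1}(y)}{\rm deg}(f,x)\,x$ defines a \emph{continuous} map from $Y$ into the $d$-fold symmetric product of $X$ (where $d={\rm deg}(f)$), and obtains the transfer by composing the pull-back along this map with the standard rational transfer for symmetric products, whose cohomology is the $\mathfrak{S}_d$-invariant part of $H^*(X^d,\mathbb{Q})$; you instead compress everything into the trace morphism ${\rm tr}:f_*\mathbb{Q}_X\to\mathbb{Q}_Y$ together with the identification $H^*(Y,f_*\mathbb{Q}_X)\cong H^*(X,\mathbb{Q})$. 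The crux is the same in both treatments — conservation of local degree as fibers degenerate, equivalently continuity of the multiplicity data — and you correctly isolate it rather than glossing over it; it does follow from the global degree-sum condition of Definition \ref{finitebranched} by the localization you sketch, and it is worth noting that the paper itself reruns exactly this counting argument in the special unfolded simplicial setting in the proof of Lemma \ref{simplicial}. Two caveats slightly narrow your version relative to Edmonds'. First, the stalk computation $(f_*\mathbb{Q}_X)_y\cong\bigoplus_{x\in f^{-1}(y)}\mathbb{Q}$, and with it the exactness of $f_*$ (equivalently the vanishing of $R^qf_*$ for $q\geq 1$), is not a consequence of properness alone: it needs closedness of $f$, Hausdorffness, finiteness of fibers and local connectedness of $X$ to produce saturated neighborhoods splitting into disjoint pieces around the preimage points — all available under the standing hypotheses preceding Definition \ref{finitebranched}, but the step should be argued, since for general maps the stalk of $f_*$ only maps to sections over the fiber. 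Second, your transfer a priori lives on sheaf cohomology, which agrees with the singular cohomology appearing in the statement only on paracompact, locally contractible spaces; Edmonds' construction works for the arbitrary locally connected Hausdorff spaces of the definition. Since every application in this paper is to locally finite simplicial complexes or quasi-projective orbifolds, where the two theories coincide, your proof fully suffices for the uses made of the theorem here.
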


We have

\begin{lemma}\label{branchedcoveringdelta}
$\Pi\vert \Delta:\Delta\to {\cal P}$ is a branched covering of degree $2g-2$.
\end{lemma}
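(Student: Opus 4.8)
The plan is to verify the two defining conditions of a finite branched covering from Definition \ref{finitebranched}: namely that $\Pi|\Delta$ is open and closed with finite degree $2g-2$, and that for each point $q\in{\cal P}$ the degree equals the sum of local degrees over the fiber. I would begin by recalling the geometric content: a fiber of $\Pi|\Delta$ over a projective differential $q$ is exactly the zero set of $q$ on the Riemann surface $P(q)$, counted \emph{without} multiplicity, while the total number of zeros \emph{with} multiplicity is always $2g-2$. So the degree bound $2g-2$ is immediate, and over the open dense stratum of differentials with $2g-2$ simple zeros, the fiber genuinely has $2g-2$ points.

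Next I would establish that $\Pi|\Delta$ is proper and finite. Since $\Delta$ is a closed subvariety of $P^*{\cal C}$ (Proposition \ref{localcomplete}) and the fibers of $\Pi:P^*{\cal C}\to{\cal P}$ are compact Riemann surfaces, the restriction $\Pi|\Delta$ has compact fibers and is a closed map; finiteness of the fibers follows because the zeros of a nonzero holomorphic one-form are isolated. A finite morphism between complex varieties is in particular open and closed (a finite morphism is proper, and a quasi-finite proper holomorphic map to a smooth base is open by the open mapping theorem applied fiberwise, or one invokes flatness). Here I can lean on Proposition \ref{miracle}: the morphism $\Pi^\Delta$ is flat of degree $2g-2$, and a finite flat morphism is automatically open. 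Thus both topological hypotheses of Definition \ref{finitebranched} are in place with ${\rm deg}(\Pi|\Delta)=2g-2$.

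The substantive step is the local-degree identity ${\rm deg}(\Pi|\Delta)=\sum_{x\in(\Pi|\Delta)^{-1}(q)}{\rm deg}(\Pi|\Delta,x)$, and this is where I expect the real work to lie. The point is that the local degree of $\Pi|\Delta$ at a zero $x$ of $q$ should equal the multiplicity of $q$ at $x$. For a differential with a zero of order $m$ at $x$, nearby differentials in a transverse deformation split this into $m$ nearby simple zeros, so the local degree is exactly $m$; summing over the distinct zeros recovers $\sum_x m_x = 2g-2$. Making this precise amounts to a local analysis: in suitable holomorphic coordinates a one-form vanishing to order $m$ looks like $z^m\,dz$ up to units, and one computes that the map on the zero locus is $m$-to-one generically. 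The cleanest route is again to invoke flatness from Proposition \ref{miracle}: for a finite flat morphism to a smooth variety, the fiber length (dimension of the structure sheaf of the scheme-theoretic fiber) is constant and equal to the degree, and this scheme-theoretic multiplicity coincides with the topological local degree of Edmonds. Hence the local-degree sum condition follows directly from flatness, and the lemma is proved.

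The main obstacle, then, is matching Edmonds's purely topological notion of local degree with the algebro-geometric multiplicity; once flatness is used to pin down the scheme-theoretic fiber length, this identification is the crux, and I would state it carefully rather than rederive the local normal form of a vanishing abelian differential from scratch.
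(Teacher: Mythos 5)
Your proposal is correct in outline but runs on a genuinely different engine than the paper's proof. The paper argues purely topologically: for a zero $z$ of order $k$ of $q=\Pi(z)$ it gets the upper bound ${\rm deg}(\Pi\vert\Delta,z)\leq k$ from continuity of zero divisors (differentials near $q$ have exactly $k$ zeros, counted with multiplicity, in a small neighborhood of $z$), and gets the matching lower bound by citing the Eskin--Masur--Zorich local surgery \cite{EMZ03} (see also \cite{H20}) that breaks a zero of order $k\geq 2$ into several nearby zeros; summing multiplicities over a fiber then gives $2g-2$. You instead make Proposition \ref{miracle} load-bearing: finiteness and properness give closedness, flatness gives openness and constancy of the scheme-theoretic fiber length $2g-2$, and the branched-covering identity of Definition \ref{finitebranched} reduces to identifying Edmonds's local degree with local fiber length. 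This is legitimate and not circular, since Proposition \ref{miracle} rests only on Proposition \ref{localcomplete} and Corollary \ref{cohenmac} --- although note the paper explicitly says that proposition ``is not used in the sequel,'' keeping its topological development independent of the algebraic machinery. Your route buys an explicit verification of openness and closedness (left implicit in the paper) and avoids the EMZ citation; its cost is exactly the crux you defer: constancy of length alone only yields $\sum_x{\rm deg}(\Pi\vert\Delta,x)\leq 2g-2$, and to get equality you must know the fibers are generically reduced near each point, i.e.\ that every neighborhood of a point of $\Delta$ contains points lying over the open stratum ${\cal P}(0)$ of differentials with simple zeros. This holds because every component of the hypersurface $\Delta$ maps finitely onto the irreducible ${\cal P}$, hence surjects; alternatively, the splitting can be done by hand, in the spirit of your first option, using base-point-freeness of the canonical system (already invoked in Proposition \ref{localcomplete}): perturbing $q$ to $q+\epsilon\omega$ with $\omega$ nonvanishing at the zero produces, by Rouch\'e and for $\epsilon$ off a discriminant, $k$ distinct simple zeros nearby. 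Supply that step explicitly and your proof is complete.
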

\begin{proof}
Let us consider a point $z\in \Delta$. Then $z$ is a zero of the 
projective abelian differential
$\Pi(z)$. Assume that the multiplicity of this zero equals $k\geq 1$. We claim that the local
degree of $\Pi\vert \Delta$ equals $k$.  As the sum of these  
multiplicities over all
points in $\Pi^{-1}(\Pi(z))$ equals $2g-2$, this is sufficient for the proof of the lemma.

Let $U$ be a compact neighborhood of $z$ in $P^*{\cal C}$ which is disjoint from the other
zeros of $\Pi(z)$, and let $q_i$ be a sequence of projective 
abelian differentials converging as $i\to \infty$  to 
$q=\Pi(z)$.  Then for large enough $i$, the number of zeros of $q_i$ contained in $U$ and 
counted with multiplicities
equals $k$. Namely, by the definition of the topology on ${\cal P}$, 
the divisors defined by 
$q_i$, that is, the zeros of $q_i$ counted with multiplicities,  converge in $P^*{\cal C}$ to 
the divisor defined by $q$.

To complete the proof of the lemma, it suffices to show that 
for $k\geq 2$ 
and any neighborhood $U$ of $z$ in $P^*{\cal C}$, 
there exists a differential $u\in \Pi(U)$ such that $\Pi^{-1}(u)\cap U$ contains at least two points.
However, this follows from the construction on p.87 of \cite {EMZ03}, see also the 
more detailed discussion in Section 2 of \cite{H20}. 
\end{proof}

Consider as before the complement ${\cal H}_*\subset {\cal H}$ of the zero section of the
Hodge bundle. This is a smooth complex orbifold. Let $\Xi:{\cal H}_*\to {\cal P}$ be 
the canonical projection. 
By Proposition \ref{localcomplete}, the pull-back of the subvariety $\Delta\subset 
P^*{\cal C}$ to $(P\circ \Xi)^*{\cal C}$ is a codimension one subvariety, denoted
again by $\Delta$ (as all our constructions are natural with respect to pull-back by 
$\Xi$, this should not lead to confusion).
Denote again by $\Pi:(P\circ \Xi)^*{\cal C}\to {\cal H}_*$ the canonical projection. The proof of 
Lemma \ref{branchedcoveringdelta} applies verbatim and shows that the
restriction $\Pi\vert \Delta:\Delta\to {\cal H}_*$ is a branched covering. Thus by Theorem \ref{edmonds}, there
is a transfer map $\tau: H^*(\Delta,\mathbb{Q})\to H^*({\cal H}_*,\mathbb{Q})$.

The following is the main result of this section. For its formulation, 
recall that the canonical projection $P\circ \Xi:{\cal H}_*\to {\cal M}_g$ induces a pull-back
homomorphism $(P\circ \Xi)^*:H^*({\cal M}_g,\mathbb{Q})\to H^*({\cal H}_*,\mathbb{Q})$.
Furthermore, the inclusion $\Delta\to (P\circ \Xi)^*{\cal C}$ induces a restriction homomorphism
$H^*((P\circ \Xi)^*{\cal C},\mathbb{Q})\to H^*(\Delta,\mathbb{Q})$. Let as before $\nu^*$ be the vertical 
cotangent bundle of $(P\circ \Xi)^*{\cal C}$.

\begin{theorem}\label{pushforward}
$(P\circ \Xi)^*\kappa_n=(-1)^{n+1} \tau(c_1(\nu^*)^n\vert \Delta)$.
\end{theorem}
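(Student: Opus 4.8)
The plan is to exploit the fact, which is the whole reason for working over $\mathcal{H}_*$ rather than over $\mathcal{P}$, that on $\mathcal{H}_*$ the abelian differentials are genuine holomorphic one-forms and hence assemble into a \emph{tautological section} of the vertical cotangent bundle. Concretely, on $W=(P\circ\Xi)^*\mathcal{C}$ I would use the holomorphic section $s$ of $\nu^*$ given by $s(q,z)=q(z)$; this is precisely the evaluation map $\theta$ of Proposition \ref{localcomplete}, now read as a section of the line bundle $\nu^*$, and its zero set is $\Delta$. Since the generic zero of an abelian differential is simple, $s$ vanishes to first order along the generic point of $\Delta$, so by the Poincar\'e--Lelong formula its zero divisor is $\Delta$ with multiplicity one. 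Hence $[\Delta]=c_1(\nu^*)$ in $H^2(W,\mathbb{Q})$. This is the geometric input that will let the transfer replace the Gysin map.

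Next I would reduce the left-hand side to a fibre integral over $\Pi$. The square with vertical maps $\Pi\colon W\to\mathcal{H}_*$ and $\Upsilon\colon\mathcal{C}\to\mathcal{M}_g$ and horizontal maps $\tilde\pi\colon W\to\mathcal{C}$ and $P\circ\Xi$ is a pull-back square of fibre bundles with compact (surface) fibre, so integration over the fibre is natural: $(P\circ\Xi)^*\circ\Upsilon_*=\Pi_*\circ\tilde\pi^*$. Since $\nu^*$ on $W$ is $\tilde\pi^*\nu^*$ and $c_1(\nu)=-c_1(\nu^*)$, the definition $\kappa_n=\Upsilon_*(c_1(\nu)^{n+1})$ gives
\[(P\circ\Xi)^*\kappa_n=\Pi_*\big(c_1(\nu)^{n+1}\big)=(-1)^{n+1}\,\Pi_*\big(c_1(\nu^*)^{n+1}\big).\]

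It remains to identify $\Pi_*\big(c_1(\nu^*)^{n+1}\big)$ with $\tau\big(c_1(\nu^*)^n|_\Delta\big)$. Writing one factor $c_1(\nu^*)$ as $[\Delta]$ and applying the projection formula for the closed embedding $i\colon\Delta\hookrightarrow W$ yields $c_1(\nu^*)^n\cup[\Delta]=i_*\big(c_1(\nu^*)^n|_\Delta\big)$, whence $\Pi_*\big(c_1(\nu^*)^{n+1}\big)=(\Pi\circ i)_*\big(c_1(\nu^*)^n|_\Delta\big)=(\Pi|\Delta)_*\big(c_1(\nu^*)^n|_\Delta\big)$. The formula then follows once the pushforward $(\Pi|\Delta)_*$ is identified with the transfer $\tau$ of Theorem \ref{edmonds} for the branched covering of Lemma \ref{branchedcoveringdelta}, giving $(P\circ\Xi)^*\kappa_n=(-1)^{n+1}\tau\big(c_1(\nu^*)^n|_\Delta\big)$.

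This last identification is the step I expect to be the main obstacle, for two related reasons. First, $\Delta$ is singular along the branch locus (the higher-order zeros), so $i_*$ and $(\Pi|\Delta)_*$ cannot be defined naively via Poincar\'e duality on $\Delta$; I would instead phrase the projection formula in Borel--Moore homology, using the fundamental class $[\Delta]$ of the complex variety $\Delta$ together with the properness of $\Pi$ and of $\Pi|\Delta$, and then transport back through Poincar\'e duality on the smooth spaces $W$ and $\mathcal{H}_*$. Second, one must check that the resulting \emph{homological} transfer coincides with Edmonds' \emph{cohomological} transfer $\tau$: this is where the branching genuinely matters, since the two maps visibly agree over the unramified locus, whose complement however has only real codimension two and so does not by itself control $\tau$ in the relevant degrees $2n$. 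I would settle this by invoking the compatibility of Edmonds' transfer with the homological transfer for finite branched coverings onto a manifold. Throughout, any orbifold issues are handled in the usual way by passing to a finite manifold cover, which does not affect rational cohomology.
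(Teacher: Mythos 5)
Your proposal is correct in substance but takes a genuinely different route from the paper. The paper argues cycle by cycle: by Thom's theorem it suffices to evaluate both sides against classes $f_*[M]$ for smooth closed oriented $M$ of dimension $2n$; it lifts $f$ to the sphere bundle ${\cal S}$ (Lemma \ref{lift}), puts the lift in general position with respect to the stratification ${\cal S}=\cup_k{\cal S}(k)$ via Thom transversality (Proposition \ref{transverse}), and then shows by a hands-on rotation-number computation that the pulled-back set $\Delta_E$ is an unfolded simplicial branched multisection Poincar\'e dual to $c_1(\nu^*)$ (Theorem \ref{branchedpoincare}, Proposition \ref{poincaredual}); the identity then follows from the transfer's evaluation property in top degree together with naturality of the transfer under pullback, as in diagram (\ref{pullbackdiagram}). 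You instead work universally on $W=(P\circ\Xi)^*{\cal C}$: the tautological section $s(q,z)=q(z)$ — correctly identified as the reason for passing from ${\cal P}$ to ${\cal H}_*$, and as the evaluation map $\theta$ of Proposition \ref{localcomplete} — has zero divisor exactly $\Delta$ with multiplicity one (here one should note that every irreducible component of the codimension-one local complete intersection $\Delta$ dominates ${\cal H}_*$ under the finite map $\Pi\vert\Delta$, so its generic point is a simple zero; your phrase ``the generic zero is simple'' needs this small supplement), giving $c_1(\nu^*)=\mathrm{PD}[\Delta]$; Gysin naturality in the Cartesian square and the projection formula in Borel--Moore homology then convert $\Pi_*(c_1(\nu^*)^{n+1})$ into a pushforward from $\Delta$, with the sign $(-1)^{n+1}$ coming out correctly from $c_1(\nu)=-c_1(\nu^*)$. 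Your route buys uniformity and economy: no transversality, no triangulation or simplicial branched-multisection machinery, and it works in all degrees at once. The paper's route buys elementarity (no Borel--Moore formalism on a singular $\Delta$) and, more importantly, the explicit branched-multisection structure of $\Delta_E$, which is not a by-product but is reused later (Proposition \ref{verticalshriek} and Section \ref{mmm} depend on it), so the paper could not have dispensed with it even given your argument.

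The step you flag — identifying the homological umkehr $\mathrm{PD}^{-1}(\Pi\vert\Delta)_*(\,\cdot\,\cap[\Delta])$ with Edmonds' transfer $\tau$ — is indeed the only real gap, and your diagnosis is accurate: Theorem \ref{edmonds} only characterizes $\tau$ on the image of $(\Pi\vert\Delta)^*$, agreement over the unramified locus does not control degree $2n$ since the branch image has real codimension only two, and so a genuine comparison statement is needed. Two remarks temper this. First, the paper itself silently uses the same compatibility in its weakest instance: equation (\ref{secondidentity}) asserts that $\tau_E$ paired against the fundamental class computes evaluation on the preimage cycle, ``essentially the definition of the transfer map''; this is exactly the top-degree case of the identification you need, and it does hold for Edmonds' chain-level construction (summation over preimages with local degrees, as in Dold's treatment of ramified coverings). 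Second, granting that top-degree compatibility plus naturality under transverse pullback, your identity in $H^{2n}({\cal H}_*,\mathbb{Q})$ can be verified by pairing against manifold cycles $F_*[M]$ — but note that making this pairing computable again requires putting $F$ in general position relative to the stratification, i.e., it reintroduces a slice of the paper's transversality argument. So either cite a chain-level comparison theorem for ramified covering transfers outright, or accept that closing the gap borrows Proposition \ref{transverse}; with either patch your proof is complete.
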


Our approach to this theorem is purely topological. Our goal
is to evaluate the $k$-th Mumford Morita Miller class $\kappa_k$ on a $2k$-cycle in 
${\cal M}_g$. To simplify matters, as we are only discussing rational cohomology,
by a result of Thom 
it suffices to assume that this cycle is given by a smooth closed oriented manifold
$M$ and a continuous map $f:M\to {\cal M}_g$. The pull-back of the universal curve
${\cal C}\to {\cal M}_g$ under $f$ is then a surface bundle $E\to M$. The map 
$f$ is called a \emph{classifying map} for $E$.

Up to homeomorphism, the bundle only
depends on the homotopy class of $f$. Equivalently, it only depends on the 
 conjugacy class of the induced \emph{monodromy homomorphism}
$f_*:\pi_1(M)\to {\rm Mod}(S_g)$. As a consequence, we may choose the
map $f$ to be smooth (as maps between orbifolds). Then $E\to M$ is a
smooth fiber bundle with the additional property that each
fiber is equipped with a complex structure varying smoothly over the base.
In the remainder of this section we always assume that this
is the case. 

As we are interesting in understanding the pull-back of $\kappa_k$ to 
${\cal H}_*$, we like to lift the classifying map $f:M\to {\cal M}_g$ 
to ${\cal H}_*$, that is, we like to find a map $F:M\to {\cal H}_*$ with the property
that $P\circ \Xi \circ F=f$.
To this end we consider the sphere subbundle ${\cal S}\subset {\cal H}_*$ of ${\cal H}$
of area one abelian differentials. Its fiber is a sphere of real dimension
$2g-1\geq 3$, and it is homotopy equivalent to ${\cal H}_*$. 

There is a natural fiber preserving action by the circle
$S^1$ which multiplies an abelian differential with a complex number
of absolute value one. The quotient of ${\cal S}$ by this action 
is the projective Hodge bundle ${\cal P}$.

The following is well known, and a detailed argument is contained
on p.133 and p.134 of \cite{Fr17}.

\begin{lemma}\label{lift}
Let $M$ be a simplicial complex of dimension $n\leq 2g-2$ and let
$f:M\to {\cal M}_g$ be a continuous map. 
Then 
there exists a continuous map $F:M\to {\cal S}$ such that 
$P\circ \Xi\circ F=f$. Any two such maps are homotopic. 
\end{lemma}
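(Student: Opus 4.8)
The plan is to recast the sought-after lift as a section of a sphere bundle over $M$ and then to invoke obstruction theory, the whole point being that the fiber $S^{2g-1}$ of $\mathcal{S}\to\mathcal{M}_g$ is highly connected relative to $\dim M$. First I would note that a map $F\colon M\to\mathcal{S}$ with $P\circ\Xi\circ F=f$ is precisely a section of the pullback $f^*\mathcal{S}\to M$, the unit sphere bundle of the pulled-back Hodge bundle $f^*\mathcal{H}\to M$. To make sense of this over the orbifold $\mathcal{M}_g$, I would use that $\mathcal{H}$ is a genuine $\mathrm{Mod}(S_g)$-equivariant complex vector bundle over the contractible Teichm\"uller space $\mathcal{T}_g$, and that $f$ is determined up to homotopy by its monodromy $f_*\colon\pi_1(M)\to\mathrm{Mod}(S_g)$. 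Consequently $f^*\mathcal{H}$ is an honest rank-$g$ complex vector bundle over the simplicial complex $M$, and $f^*\mathcal{S}$ is an honest fiber bundle with fiber the unit sphere $S^{2g-1}\subset\mathbb{C}^g$, so that classical obstruction theory on $M$ applies.

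The only homotopy-theoretic input is that $S^{2g-1}$ is $(2g-2)$-connected, i.e. $\pi_i(S^{2g-1})=0$ for $i\le 2g-2$. For existence I would build a section skeleton by skeleton: it exists over the $0$-skeleton since the fibers are nonempty, and extends over the $1$-skeleton since the fiber is path-connected, while the obstruction to extending from the $q$-skeleton to the $(q+1)$-skeleton lies in $H^{q+1}(M;\pi_q(S^{2g-1}))$, with local coefficients given by the monodromy action on the homotopy groups of the fiber. Since $\dim M=n\le 2g-2$, every such extension has $q\le n-1\le 2g-3$, so the coefficient group vanishes and all obstructions are automatically zero; hence the lift $F$ exists.

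For the uniqueness statement I would run the parallel difference-obstruction argument: given two lifts $F_0,F_1$, a homotopy between them is a section of the bundle pulled back to $M\times[0,1]$ restricting to $F_0,F_1$ on the two ends, and the obstructions to constructing it lie in $H^q(M;\pi_q(S^{2g-1}))$ for $1\le q\le n$. Again these groups vanish because $\pi_q(S^{2g-1})=0$ for $q\le n\le 2g-2$, so the two lifts are homotopic.

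I expect no genuine obstacle in the obstruction-theoretic computation itself; the only point demanding care is the reduction of the orbifold bundle $\mathcal{S}\to\mathcal{M}_g$ to an honest fiber bundle over $M$ so that the classical machinery is applicable. It is worth remarking that the hypothesis $n\le 2g-2$ is sharp precisely for the uniqueness half: existence alone would survive up to $n\le 2g-1$, whereas the top difference obstruction sits in $H^n(M;\pi_n(S^{2g-1}))$, and $\pi_n(S^{2g-1})$ first becomes nonzero exactly when $n=2g-1$.
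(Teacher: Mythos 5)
Your proposal is correct and is essentially the paper's own argument: the paper likewise proves existence by extending a section of ${\cal S}$ (equivalently of $f^*{\cal S}$) skeleton by skeleton over $M$, using that the fiber $S^{2g-1}$ is $k$-connected for all $k\leq 2g-2$, and proves uniqueness by the same cell-by-cell induction applied over $M\times[0,1]$. Your obstruction-cohomology packaging, the explicit reduction of the orbifold bundle to an honest sphere bundle over $M$ via the equivariant bundle on Teichm\"uller space, and the sharpness remark about $\pi_n(S^{2g-1})$ are accurate refinements of that same argument rather than a different route.
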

\begin{proof} For each vertex $p$ of the simplicial complex 
$M$ choose 
a point $F(p)$ in the fiber of ${\cal S}$ over $p$. This defines a section 
of ${\cal S}$ over the zero-skeleton of ${\cal S}$.
Extend by induction on $k$ 
the section $F$ to the 
$k$-skeleton of $M$ as follows. 

Assume that we extended $F$ to a map on the $(k-1)$-skeleton of $M$ for some $k\geq 1$.
Let $\Delta^k$ be a $k$-simplex of $M$. Then $\Delta^k$ is contractible
and therefore the pull-back $(f\vert \Delta^k)^*{\cal S}$ 
of the bundle ${\cal S}$ to $\Delta^k$ is trivial. 
Since the fiber of ${\cal S}$ is $2g-2\geq k$-connected, the restriction of the 
section $F$ to the boundary $\partial \Delta^k$ of $\Delta^k$ 
can extended to $\Delta^k$. This yields the induction step. 

We have to show that any two such lifts $F,\Phi$ of $f$ are homotopic. 
Namely, since the fiber of ${\cal S}$ is connected, we can connect
the images under $F,\Phi$ of the vertices of $M$ by an arc in the fixed fiber. These arcs then
define a homotopy between the restrictions of $F,\Phi$ to the zero-skeleton of $M$.
By induction on $k$, this homotopy can be extended to a homotopy between
the restrictions of $F,\Phi$ to the $k$-skeleton of $M$ using the fact that the fiber of 
${\cal S}$ is $k$-connected for all $k\leq 2g-2$.
\end{proof}

Since smooth manifolds can be triangulated, 
Lemma \ref{lift} yields that if $M$ is a smooth closed manifold of dimension
$n\leq 2g-2$, then 
for a continuous map $f:M\to {\cal M}_g$ there exists a lift 
$F:M\to {\cal S}$ of $f$.  A homotopy of $F$ descends to a homotopy of $f$, and a 
homotopy of a classifying map does not change the surface bundle obtained
as pull-back of the universal curve ${\cal C}\to {\cal M}_g$.
Thus when analyzing the topology of the surface bundle $E=f^*{\cal C}$, 
we are free
to modify the map $F$ with a homotopy. As $\kappa_{g-1}=0$ \cite{Lo95}, 
it is sufficient for the
understanding of the classes $\kappa_i$ to consider cycles in ${\cal M}_g$ defined
as the projections of smooth maps $F:M\to {\cal S}$ where $M$ is a smooth 
manifold of dimension $2n\leq 2g-4$.

The strategy is as follows. We begin with showing that
by modifying $F$ with a homotopy, we may assume that 
$\Delta_E=F^*\Delta\subset F^*(P\circ \Xi)^*{\cal C}=E$ is a cycle
of dimension $2n$ so that the restriction of $\Pi_E:E\to M$ 
to $\Delta$ is a branched covering of degree $2g-2$.
If we denote by $\tau_E:H^*(\Delta_E,\mathbb{Q})\to
H^*(M,\mathbb{Q})$ the transfer homomorphism given by Theorem \ref{edmonds}, 
then we obtain a commutative diagram
\begin{equation}\label{pullbackdiagram}\begin{tikzcd}
    H^*(\Delta,\mathbb{Q}) \arrow[r,"F^*"] \arrow[d,"\tau"] &
    H^*(\Delta_E,\mathbb{Q})\arrow[d,"\tau_E"]\\
 H^*({\cal M}_g,\mathbb{Q})\arrow[r,"f^*"] &H^*(M,\mathbb{Q}).
  \end{tikzcd}
\end{equation}   
By naturality of the Chern classes under pull-back, if we denote by
$F^*(\nu^*)$ the pull-back of the line bundle $\nu^*$ under the bundle
map $E\to (P\circ \Xi)^*{\cal C}$ which is induced by the map $F$,
then we show that the image of $(-1)^{n+1}c_1(F^*\nu^*)^n$ under 
the transfer homomorphism $\tau_E$ equals the class $f^*\kappa_n$.

The remainder of this section is devoted to implement this strategy.
To this end and for the purpose of obtaining some additional
geometric information on surface bundles,
it will be convenient to use a considerably more
restricted notion of a branched covering which 
we introduce next. For its formulation, call a
simplicial complex $X$ to be \emph{of homogeneous dimension $n$}
if the $n$ is the dimension of $X$ and if furthermore every
simplex is contained in a simplex of dimension $n$.
We always identify a simplicial complex with its geometric
realization. Note that a locally compact 
simplicial complex is locally connected. 


\begin{definition}\label{finitebranchedsimp}
An open and closed surjective 
simplicial map $\phi:X\to Y$ between
locally compact connected 
simplicial complexes $X,Y$ of homogeneous dimension $n$ 
is an \emph{unfolded simplicial
branched covering of degree $m\geq 1$} if 
there exists a simplicial subcomplex $X^1\subset X$
of codimension 2 with the following properties.
\begin{enumerate}
\item $\phi^{-1}(\phi(X_1))=X_1$. 
\item
For each $x\in X-X^1$ we have 
$\sharp \phi^{-1}(\phi(x))=m$.
\item Each point $y\in \phi(X^1)$ admits a neighborhood basis
by open sets $U_i$ such that $U_i-\phi(X^1)$ is path connected for all $i$.
\end{enumerate}
\end{definition}

\begin{lemma}\label{simplicial}
An unfolded simplicial branched covering $\phi:X\to Y$ 
is a branched covering in the sense of Definition \ref{finitebranched}.
\end{lemma}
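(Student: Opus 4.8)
The plan is to verify the three defining conditions of a finite branched covering (Definition \ref{finitebranched}) directly from the hypotheses of an unfolded simplicial branched covering. First I would check that $\phi$ is finite of degree $m$, that is, that $\deg(\phi)=\sup_y \sharp\phi^{-1}(y)=m$. Away from the branch locus, condition (2) gives $\sharp\phi^{-1}(\phi(x))=m$ for every $x\notin X^1$, so it remains to control fibers over points of $\phi(X^1)$. Since $\phi$ is a simplicial map between complexes of homogeneous dimension $n$ and $X^1$ has codimension $2$, a point $y\in\phi(X^1)$ is a limit of points $y_i\in Y-\phi(X^1)$, and using that $\phi$ is closed together with the local finiteness of the simplicial structure I would argue that $\sharp\phi^{-1}(y)\le m$; it cannot exceed $m$ because fibers over nearby generic points have exactly $m$ points and sheets can only collide over the branch locus. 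Thus $\deg(\phi)=m<\infty$.

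The substantive part is the balancing condition: for each $y\in Y$ one must show $\deg(\phi)=\sum_{x\in\phi^{-1}(y)}\deg(\phi,x)$, where $\deg(\phi,x)$ is the local degree from the definition preceding Definition \ref{finitebranched}. For $y\notin\phi(X^1)$ the fiber has exactly $m$ points, each with local degree $1$, so the sum is $m=\deg(\phi)$ and there is nothing to prove. So I would fix $y\in\phi(X^1)$ and a point $x\in\phi^{-1}(y)$, and compute $\deg(\phi,x)=\inf_U\sup\{\sharp(\phi^{-1}\phi(z)\cap U)\mid z\in U\}$. The idea is that for a sufficiently small neighborhood $U$ of $x$, the quantity $\sup_{z\in U}\sharp(\phi^{-1}\phi(z)\cap U)$ counts exactly the number of sheets of $\phi$ over $U$ that pass through $x$, i.e. the number of local preimages near $x$ of a generic nearby point. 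Summing over the finitely many points $x\in\phi^{-1}(y)$ should recover the total generic fiber cardinality $m$, provided no sheets are lost.

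The key technical input guaranteeing no loss of sheets is condition (3): each $y\in\phi(X^1)$ has a neighborhood basis $\{U_i\}$ with $U_i-\phi(X^1)$ path connected. I expect this to be the main obstacle, and here is how I would use it. Path connectedness of $U_i-\phi(X^1)$ means that a generic point $y'\in U_i-\phi(X^1)$ can be reached from any direction without crossing the branch locus, so the $m$ sheets of $\phi$ over $U_i-\phi(X^1)$ cannot be permuted into disjoint families that converge to distinct fibers over $y$; rather, following the $m$ preimages of $y'$ as $y'\to y$ inside $U_i$, each converges (after passing to the limit, using closedness and properness of $\phi$) to some point of $\phi^{-1}(y)$, and every point of $\phi^{-1}(y)$ arises this way. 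Condition (1), $\phi^{-1}(\phi(X^1))=X^1$, ensures the branch locus upstairs and downstairs correspond cleanly so that these limits are well-defined and the local-degree count at each $x$ equals the number of sheets limiting to $x$. Partitioning the $m$ generic preimages according to which point $x\in\phi^{-1}(y)$ they limit to then yields exactly $\sum_x\deg(\phi,x)=m=\deg(\phi)$, which is the required balancing identity.

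Finally I would note that $\phi$ is open and closed by hypothesis and continuous as a simplicial map, so together with finiteness and the balancing condition just established, $\phi$ satisfies all the requirements of Definition \ref{finitebranched} and is therefore a finite branched covering. The rigor of the limiting argument in the third paragraph — making precise that the $m$ generic sheets partition into the local-degree contributions at the points of the fiber, with path connectedness ruling out the pathology of sheets that fail to limit to any fiber point or that artificially split — is where the real work lies, and I would carry it out using the local conical structure of simplicial maps on small neighborhoods of the simplices containing $x$.
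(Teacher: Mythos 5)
Your proposal takes essentially the same route as the paper: reduce to the degree-$m$ unbranched covering over $Y-\phi(X^1)$, use condition (3) to keep punctured neighborhoods of a branch value path connected so that the $m$ generic sheets partition according to the fiber point they accumulate at, and use closedness (and openness) of $\phi$ to see that every fiber point is hit and no sheets are lost, giving $\sum_x \deg(\phi,x)=m$. The paper carries out exactly the limiting argument you defer, by decomposing $\phi^{-1}(V-\phi(X^1))$ into path components $W_j$, each an unbranched covering of $V-\phi(X^1)$ of some degree $a_j$ whose closure contains a fiber point by closedness of $\phi$, and then ruling out excess local degree at points $z\in V\cap\phi(X^1)$ (the one case your sketch leaves implicit, since the supremum in the definition of local degree also ranges over branch-locus points) by a short contradiction pushing such $z$ to a nearby generic point.
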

\begin{proof}
Let $X$ be a locally compact connected 
simplicial complex of homogeneous dimension $n$ and let 
$\phi:X\to Y$ be an unfolded simplicial branched covering 
of degree $m\geq 1$. 
By assumption, $\phi$ is open and closed.

Let $X^1\subset X$ be as in Definition \ref{finitebranchedsimp}. 
Then $X^1$ is a closed subset of $X$ of codimension $2$. Since 
$X$ is of homogeneous dimension $n$, the complement 
$X-X^1$ is open and dense. 
By Properties (1) and (2) in Definition \ref{finitebranched} and surjectivity of 
the map $\phi$, 
the restriction of $\phi$ to $X-X^1$ 
is an ordinary covering projection of degree $m$ onto $Y-\phi(X_1)$.  
Thus for each $y\in Y-\phi(X^1)$ and each $x\in \phi^{-1}(y)$ we have
${\rm deg}(\phi,x)=1$ and 
$\sum_{x\in \phi^{-1}(y)}{\rm deg}(\phi,x)=m$. 

Now let $y\in \phi(X^1)$ and let $\{x_1,\dots,x_s\}=\phi^{-1}(y)\subset X$.
Property (1) implies that $\{x_1,\dots,x_s\}\subset X^1$. 
For each $i$ choose an open path connected neighborhood $U_i$ of $x_i$ such that
the sets $U_i$ are pairwise disjoint. Since $\phi$ is open, the 
set $U=\cap_i\phi(U_i)$ is a neighborhood of $y$ in $Y$. 
By 
Property (3) in Definition \ref{finitebranchedsimp}, there exists an open path connected 
neighborhood $V\subset U$ of $y$ in $Y$ such that $V-\phi(X^1)$ is path connected. 
Since $\phi(X^1)$ is of codimension two and $Y$ is of homogeneous dimension $m$, 
we may assume that the closure of 
$V-\phi(X^1)$ contains $V$.

Since
the restriction of $\phi$ to $\phi^{-1}(V-\phi(X^1))$ is a finite unbranched covering of degree $m$ and  each point
$z\in V-\phi(X^1)$ has at least one preimage in each of the sets $U_i$,
we conclude that $s\leq m$.
Furthermore, as $V-\phi(X^1)$ is path connected,
$\phi^{-1}(V-\phi(X^1))$ is a disjoint 
union of $p\leq m$ path connected sets $W_j$
such that for each $j$, the restriction of 
$\phi$ to $W_j$ is an unbranched covering $W_j\to V-\phi(X^1)$. 
Now $\phi$ is closed and therefore for each $j$, 
the image $\phi(\overline{W}_j)$ of 
the closure $\overline{W_j}$ of $W_j$ contains the closure of 
$V-\phi(X^1)$, and hence it contains $V$.
In particular, $\overline{W}_j$ contains one of the points $x_i$. 
Thus the local degree of $x_i$ for the restriction of $\phi$ to the closure 
of $W_j$ is not smaller than $d_i=\sum a_j$, where $a_j$ denotes 
the degree of the covering
$W_j\to V-\phi(X^1)$, and the sum is over all $j$ so that $x_i\in \overline{W_j}$. 
Furthermore, we have $\sum_i d_i=m$.

If the local degree of $\phi$ at $x_i$ 
is bigger than $d_i$ 
then there exists a point $z\in V\cap \phi(X^1)$ and a set $W_j$ whose closure contains
$x_i$ and such that $z$ has more than $a_j$ preimages
 in $\overline{W_j}$. The above reasoning then implies that then there is a point in 
 $V-\phi(X^1)$ near $z$ with more than $a_j$ preimages in $W_j$ which is 
 impossible. Together this shows that indeed,
 $\sum_{i}{\rm deg}(\phi,x_i)=m$ as claimed. 
As $y\in \phi(X^1)$ was arbitrary, the map $\phi$ is a branched
covering as claimed in 
the lemma. 
\end{proof}

\begin{example}\label{surfacebranched}
Recall that 
a branched covering of a surface $\Sigma$ over
a surface $B$ is a finite to one 
surjective map $\phi:\Sigma\to B$ such that there exists a finite set 
$A\subset B$, perhaps empty, with the property that 
$\phi\vert \phi^{-1}(B-A)$ is an ordinary covering projection. 
Given such a branched covering $\Sigma\to B$, choose a triangulation $T$
of $B$ such that each point in the set $A$ is a vertex of the triangulation.
Then the preimage of $T$ is a triangulation of $\Sigma$ so that the interior 
of each edge and each triangle has precisely $m$ preimages where $m$ is the degree of the 
covering. Thus for these triangulations, the branched covering $\phi$ is an
unfolded  simplicial
branched covering in the sense of Definition \ref{finitebranchedsimp}. 
\end{example}

A \emph{section} of a surface bundle  
$\Pi:E\to M$ is a smooth
map $\sigma:B\to E$ such that $\Pi\circ \sigma={\rm Id}$. 
The following is well known and reported here
for completeness.
The recent article \cite{CS18} contains a  comprehensive discussion of this
and related facts. 
For its formulation, define a \emph{lift} of the monodromy
homomorphism $f_*:\pi_1(M)\to {\rm Mod}(S_g)$ to be a homomorphism
$\tilde f_*:\pi_1(M)\to {\rm Mod}(S_{g,1})$ with the property that
$\Theta \circ \tilde f_*=f_*$ where $\Theta$ is defined in (\ref{birman}).  

\begin{lemma}\label{sectionexists}
The surface bundle $E\to M$ 
has a section if and only if there is a lift
$\tilde f_*:\pi_1(M)\to {\rm Mod}(S_{g,1})$ of 
the monodromy homomorphism $f_*:\pi_1(M)\to {\rm Mod}(S_g)$. 
\end{lemma}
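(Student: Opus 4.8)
The plan is to prove the equivalence by relating sections of the surface bundle $E\to M$ to lifts of the monodromy homomorphism through the Birman exact sequence (\ref{birman}). The key observation is that a section of $E\to M$ is precisely a choice of marked point in each fiber varying continuously (and smoothly) over $M$, which is the same data as a classifying map into the universal curve ${\cal C}\to {\cal M}_g$ rather than merely into ${\cal M}_g$. Since ${\cal C}$ is the moduli space of genus $g$ curves with one marked point and is a classifying space (in the orbifold sense) for ${\rm Mod}(S_{g,1})$, while ${\cal M}_g$ classifies ${\rm Mod}(S_g)$, the forgetful map ${\cal C}\to {\cal M}_g$ induces on fundamental groups exactly the homomorphism $\Theta:{\rm Mod}(S_{g,1})\to {\rm Mod}(S_g)$.

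First I would make precise the dictionary between bundle-theoretic and homotopy-theoretic data. The surface bundle $E\to M$ is classified up to equivalence by the homotopy class of $f:M\to {\cal M}_g$, equivalently by the conjugacy class of the monodromy $f_*:\pi_1(M)\to {\rm Mod}(S_g)$. A section $\sigma:M\to E$ amounts to lifting the classifying map $f$ to a map $\tilde f:M\to {\cal C}$ with $\Upsilon\circ \tilde f=f$, because the total space $E=f^*{\cal C}$ and a section selects a point in the fiber $f(p)$ over each $p\in M$, which is exactly a point of ${\cal C}$ lying over $f(p)$. Thus the existence of a section is equivalent to the existence of such a lift $\tilde f$, up to homotopy.

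Next I would translate this lifting problem for maps into the corresponding lifting problem for homomorphisms of fundamental groups. Since ${\cal C}$ and ${\cal M}_g$ are aspherical orbifolds (their universal covers ${\cal T}_{g,1}$ and ${\cal T}_g$ are contractible Teichm\"uller spaces), a map $f:M\to {\cal M}_g$ lifts through the fibration $\Upsilon:{\cal C}\to {\cal M}_g$ precisely when its induced homomorphism $f_*$ on fundamental groups lifts through $\Theta$. Concretely, using that these spaces are $K(\pi,1)$'s, homotopy classes of maps into them correspond to conjugacy classes of homomorphisms out of $\pi_1(M)$, and a lift of the map corresponds to a homomorphism $\tilde f_*:\pi_1(M)\to {\rm Mod}(S_{g,1})$ with $\Theta\circ \tilde f_*=f_*$. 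This is exactly the notion of a lift of the monodromy defined before the statement.

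The main obstacle will be handling the orbifold subtleties carefully: ${\cal M}_g$ and ${\cal C}$ are not genuine $K(\pi,1)$ spaces but orbifold classifying spaces, so the correspondence between maps and homomorphisms requires passing to finite manifold covers or working with the orbifold fundamental groups directly, and one must ensure the classifying map $f$ is chosen as a map of orbifolds compatibly with the monodromy. A secondary technical point is promoting a continuous section to a smooth one, which is routine by a standard approximation argument since the fiber bundle is smooth. The cleanest route is to invoke the fact that for aspherical targets the obstruction to lifting a map against a fibration with aspherical fiber lies entirely in the algebra of the fundamental group sequence, here the Birman exact sequence with fiber $\pi_1(S_g)$, so that the existence of a section and the existence of an algebraic splitting of $f_*$ through $\Theta$ coincide.
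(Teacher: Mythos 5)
The paper does not actually prove this lemma: it is stated as well known, with \cite{CS18} cited as a comprehensive reference, so there is no in-paper argument to compare against. Your proposal reconstructs the standard proof, and its skeleton is correct: sections of $E=f^*{\cal C}$ biject on the nose with lifts $\tilde f:M\to {\cal C}$ of $f$ through $\Upsilon$ (universal property of the pullback, no homotopy needed), and the lifting problem for maps is then converted into the lifting problem for $f_*$ through $\Theta$ using asphericity, with the Birman sequence (\ref{birman}) playing the role of the long exact homotopy sequence of the fibration with fiber $S_g$. One small bookkeeping point you omit: $f_*$ is only defined up to conjugacy, so a priori one obtains $\Theta\circ\tilde f_*=c_\gamma\circ f_*$ for some $\gamma\in {\rm Mod}(S_g)$; surjectivity of $\Theta$ lets you correct $\tilde f_*$ by a lifted conjugation, so this is harmless but should be said.

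The one place where your plan as written would fail is your first proposed remedy for the orbifold subtlety. Passing to a finite manifold cover of ${\cal M}_g$ (a level cover, say) does not work: the classifying map $f$ factors through such a cover only if $f_*\pi_1(M)$ lies in the corresponding finite-index subgroup of ${\rm Mod}(S_g)$, which is false in general, and passing instead to a finite cover of $M$ only yields the statement for that cover, from which a section of $E\to M$ itself does not follow. The clean repair is your second option, made precise via classifying spaces of diffeomorphism groups: by Earle--Eells, for $g\geq 2$ the identity components of ${\rm Diff}^+(S_g)$ and of the group of diffeomorphisms fixing a marked point are contractible, so $B{\rm Diff}^+(S_g)\simeq K({\rm Mod}(S_g),1)$ and $B{\rm Diff}^+(S_{g,1})\simeq K({\rm Mod}(S_{g,1}),1)$ as honest spaces, and the evaluation fibration $B{\rm Diff}^+(S_{g,1})\to B{\rm Diff}^+(S_g)$, with fiber $S_g$ and Borel construction equal to the universal surface bundle, realizes (\ref{birman}). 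With ${\cal M}_g$ and ${\cal C}$ replaced by these genuine aspherical models (which is what the paper means by classifying spaces in the orbifold sense), your obstruction-theoretic step goes through verbatim, and the smooth approximation point is routine as you say. This is also essentially the route taken in \cite{CS18}.
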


\begin{example}\label{cobordant}
A surface bundle over a surface may not admit a section, however it is known
that any surface bundle over a surface is cobordant to a surface bundle
which admits a section. We refer to \cite{H83} for more information.
\end{example}

We shall use the following generalization of the
notion of a section of a surface bundle $E\to M$. In its formulation, 
a simplicial map of a simplicial complex onto a smooth manifold means
a simplicial map for a suitable choice of a triangulation of $M$.
If we talk about an unfolded simplical branched covering 
$\phi:X\to Y$ where $X$ is disconnected, then we assume that 
$X$ consists of finitely many components and that the restriction of 
$\phi$ to each of these components is an unfolded simplicial branched
covering in the sense of Definition \ref{finitebranchedsimp}. The degree 
of such a map $\phi$ 
then is defined to be the sum of the degrees of the restriction of $\phi$ to each
connected component of $X$.

\begin{definition}\label{branchedmulti}
A \emph{branched multi-section of degree $d$}
of a surface bundle
$\Pi:E\to M$ is defined to be an embedding
$\phi:\Sigma\to E$ where $\Sigma$ is a 
(not necessarily connected) simplicial complex and 
such that $\Pi\circ \phi:\Sigma\to M$ is an unfolded simplicial branched covering of degree $d$.
\end{definition}

Any section of a surface bundle $E\to M$ is a branched multisection as in 
Definition \ref{branchedmulti}. A \emph{multisection} of $E$ is 
an embedding $\phi:\Sigma\to E$ so that $\Pi\circ \phi$ is 
an unbranched covering. 



\begin{lemma}\label{cycle}
Let $\Pi:E\to M$ be a surface bundle over 
an oriented closed manifold $M$ of dimension $n$ and let 
$\phi:\Sigma\to E$ be a branched multisection of 
degree $d$. Then $\phi(\Sigma)$ is a cycle in 
$E$ defining a homology class $\phi(\Sigma)\in H_n(E,\mathbb{Z})$.
\end{lemma}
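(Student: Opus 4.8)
The plan is to realize $\Sigma$ as an oriented pseudomanifold whose fundamental cycle is obtained by pulling back the orientation of $M$, and then to push that cycle forward under the embedding $\phi$. Write $p=\Pi\circ\phi:\Sigma\to M$ for the unfolded simplicial branched covering, and let $\Sigma^1\subset\Sigma$ be the codimension two subcomplex from Definition~\ref{finitebranchedsimp}. I may choose a triangulation of $M$ for which $p$ is simplicial; since $M$ is a closed oriented $n$-manifold, its fundamental class is represented by a simplicial cycle $\sum_i\epsilon_i\sigma_i$, the sum being over the $n$-simplices $\sigma_i$ of $M$ with coherent orientations $\epsilon_i=\pm1$ chosen so that this chain has vanishing boundary.

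First I would orient the top simplices of $\Sigma$. The interior of every $n$-simplex $\tilde\sigma$ of $\Sigma$ is disjoint from the $(n-1)$-skeleton, hence from $\Sigma^1$, so it lies in $\Sigma-\Sigma^1$. There $p$ restricts to a genuine covering onto $M-p(\Sigma^1)$, as established in the proof of Lemma~\ref{simplicial}; thus $p$ is a local homeomorphism on the interior of $\tilde\sigma$ and carries $\tilde\sigma$ homeomorphically onto an $n$-simplex $p(\tilde\sigma)=\sigma_{i(\tilde\sigma)}$ of $M$. I orient $\tilde\sigma$ by pulling back the orientation $\epsilon_{i(\tilde\sigma)}\sigma_{i(\tilde\sigma)}$, obtaining signs $\tilde\epsilon_{\tilde\sigma}$, and I set $c=\sum_{\tilde\sigma}\tilde\epsilon_{\tilde\sigma}\tilde\sigma$, the sum running over all $n$-simplices of $\Sigma$.

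The heart of the argument is to verify that $c$ is a cycle. Since $\Sigma^1$ has codimension two it contains no $(n-1)$-simplex, so the interior of every $(n-1)$-simplex $\tilde\tau$ of $\Sigma$ again lies in $\Sigma-\Sigma^1$. Picking a point $x$ in that interior together with a neighborhood on which $p$ is a homeomorphism onto a neighborhood $U$ of $p(x)$ in $M$, the local structure of $\Sigma$ near $\tilde\tau$ is identified with that of $M$ near the $(n-1)$-simplex $\tau=p(\tilde\tau)$. As $M$ is a closed manifold, $\tau$ is a face of exactly two $n$-simplices, so the same holds for $\tilde\tau$, say for $\tilde\sigma_1$ and $\tilde\sigma_2$, which this local homeomorphism carries onto the two distinct $n$-simplices of $M$ abutting $\tau$. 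Because both sheets are seen inside one orientation-preserving local homeomorphism, the incidence numbers and orientation signs transport simultaneously, so the coefficient of $\tilde\tau$ in $\partial c$ equals the coefficient of $\tau$ in the boundary of the fundamental cycle of $M$, which vanishes. Hence $\partial c=0$, and $c$ is a cycle supported on all of $\Sigma$.

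Finally, since $\phi$ is an embedding, $\phi_*c$ is a cycle in $E$ and defines the homology class $\phi(\Sigma)=\phi_*[c]\in H_n(E,\mathbb{Z})$ asserted in the lemma; if $\Sigma$ is disconnected one simply runs the construction on each component and adds the resulting cycles. I expect the orientation bookkeeping along the $(n-1)$-simplices to be the only genuine obstacle: the cancellation in $\partial c$ rests entirely on the branch locus having codimension two, so that no $(n-1)$-simplex meets it, together with the fact that both top simplices adjacent to a given $\tilde\tau$ are visible within a single local homeomorphism to $M$, which forces their signs to agree with those in the fundamental cycle of $M$.
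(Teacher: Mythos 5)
Your proof is correct and follows essentially the same route as the paper: the paper likewise pulls back the orientation of the $n$-simplices of $M$ to their preimages in $\Sigma$ (which avoid the codimension-two branch locus, as do the $(n-1)$-simplices) and observes that these orientations are compatible with the boundary maps since these commute with the projection, so that $\phi(\Sigma)$ is a cycle. Your write-up merely makes explicit the cancellation along $(n-1)$-simplices via the local homeomorphism onto $M$, which the paper leaves implicit (noting only in passing that Theorem \ref{edmonds} gives an alternative proof).
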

\begin{proof}
By definition, if $n={\rm dim}(M)$ then $\Sigma$ is a simplicial complex
of homogeneous dimension $n$. Furthermore, 
each $n$-dimensional simplex of $M$ has precisely $d$ preimages in $\Sigma$, and
each of these preimages inherits from 
the orientation of $M$ an orientation.  These orientations are compatible
with boundary maps. As such boundary maps commute with the restriction of the
projection $\Pi$ to $\phi(\Sigma)$, 
this yields that $\phi(\Sigma)\subset E$ is indeed a cycle.
We note that Theorem \ref{edmonds} can also be used for a proof of this lemma.
\end{proof}

The vertical cotangent bundle $\nu^*$ of $E$ is the pull-back of the
vertical cotangent bundle of the universal curve ${\cal C}\to {\cal M}_g$ under
a classifying map for $E$.
The following is the main technical result of this section.

\begin{theorem}\label{branchedpoincare}
  Let $E\to M$ be a surface bundle over a smooth closed
  oriented manifold $M$ of real dimension $n\leq 2g-4$.
  Then up to homotopy, a classifying map $f:M\to {\cal M}_g$
    admits a lift $F:M\to {\cal H}^*$ with the following property.
    The preimage $\Delta_E$ of the subvariety $\Delta$ of
 $F^*\Xi^*P^*{\cal C}=E$ is the image of     
a branched multisection of degree $2g-2$ whose
homology class is Poincar\'e dual to the Chern class
$c_1(\nu^*)\in H^2(E,\mathbb{Z})$ of 
the vertical cotangent bundle $\nu^*$. 
\end{theorem}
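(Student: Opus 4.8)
The plan is to lift $f$ into the Hodge bundle, to recognize $\Delta_E$ as the zero locus of the tautological abelian differential regarded as a section of $\nu^*$, and to make this section transverse to the zero section by a generic homotopy, so that the Euler class computation identifies $[\Delta_E]$ with the Poincar\'e dual of $c_1(\nu^*)$. First I would apply Lemma \ref{lift}: since $n\leq 2g-4\leq 2g-2$, the classifying map $f$ admits a lift $F\colon M\to {\cal S}\subset {\cal H}_*$, and any two lifts are homotopic. A point of ${\cal H}_*$ is a nonzero holomorphic one-form, so the evaluation map $\theta$ of Proposition \ref{localcomplete} is a holomorphic section $s$ of the vertical cotangent bundle $\nu^*$ over $(P\circ\Xi)^*{\cal C}$ whose zero locus is the subvariety $\Delta$. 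Pulling back by $F$ produces a section $s_E=F^*s$ of $\nu^*$ on $E=F^*\Xi^*P^*{\cal C}$ with $\Delta_E=s_E^{-1}(0)=F^*\Delta$; its restriction to each fibre is a nonzero holomorphic one-form and therefore has zeros of total multiplicity $2g-2$.

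Next I would put $F$ in general position. The crucial point is that $s_E$ can be made transverse to the zero section of $\nu^*$: at a simple zero $\partial_w s_E\neq 0$, so transversality is automatic, while at a zero of order $m\geq 2$ one has $\partial_w s_E=0$ and, in a fibre coordinate $w$ centered at the zero, $s_E=w^m+a_{m-1}w^{m-1}+\cdots+a_0$ with all $a_i$ vanishing at the base point, whence $ds_E=da_0$ there. Thus transversality can fail only on the locus where $s_E=0$, $\partial_w s_E=0$ and $\mathrm{rank}_{\mathbb R}\,da_0\leq 1$; these impose $2+2+(n-1)=n+3$ real conditions, exceeding $\dim_{\mathbb R}E=n+2$, so for generic $F$ this locus is empty. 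Hence $\Delta_E$ is a smooth closed submanifold of $E$ of real codimension two. A simultaneous general position argument with respect to the stratification ${\cal P}=\cup_k{\cal P}(k)$ of Definition \ref{stratified}, in which ${\cal P}(k)$ has complex codimension $k$, makes $\Xi\circ F$ transverse to every stratum; then over the complement of the codimension-two set $(\Xi F)^{-1}(\cup_{k\geq 1}{\cal P}(k))$ the differential has $2g-2$ distinct simple zeros, so $\Pi_E\vert\Delta_E\colon\Delta_E\to M$ is an unbranched $(2g-2)$-sheeted covering there, branching over the remaining codimension-two set exactly as in the proof of Lemma \ref{branchedcoveringdelta}.

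I would then realize $\Delta_E$ as a branched multisection and conclude. Choosing compatible triangulations of $M$ and $\Delta_E$ in which the branch image is a subcomplex -- possible because it is a Whitney stratified set of real codimension two -- and letting $X^1=(\Pi_E\vert\Delta_E)^{-1}(\text{branch image})$, I would verify the three conditions of Definition \ref{finitebranchedsimp}: condition (1) holds since $X^1$ is by definition the full preimage of its image, condition (2) holds because the complement of the branch image is covered $(2g-2)$-to-one, and condition (3) holds because a small ball in $M$ minus a real codimension-two subset is path connected. By Lemma \ref{simplicial} and Definition \ref{branchedmulti} this exhibits $\Delta_E$ as a branched multisection of degree $2g-2$, and Lemma \ref{cycle} shows that it is a cycle. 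Finally, for a complex line bundle the zero locus of a section transverse to the zero section is Poincar\'e dual to the Euler class $e(\nu^*)=c_1(\nu^*)$, carrying the orientation induced by its complex normal bundle; this orientation coincides with the one of Lemma \ref{cycle} because each sheet projects to $M$ preserving orientation. Hence the homology class of $\Delta_E$ is Poincar\'e dual to $c_1(\nu^*)$.

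The step I expect to be the main obstacle is the transversality argument of the second paragraph: one must check that homotoping the single map $F$ -- equivalently, varying the family of fibrewise holomorphic differentials -- supplies enough transverse perturbations to realize the jet-transversality used in the codimension count and to make $\Xi\circ F$ transverse to all strata at once. Once $\Delta_E$ is known to be a smooth submanifold branching over a codimension-two subset, the translation into an unfolded simplicial branched covering and the Poincar\'e-duality conclusion are routine.
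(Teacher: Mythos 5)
Your proposal is correct and follows the paper's skeleton for most of the way --- the lift via Lemma \ref{lift}, general position of $F$ with respect to the stratification ${\cal S}=\cup_k{\cal S}(k)$ (the paper's Proposition \ref{transverse}, an application of Thom transversality in the stratified setting of \cite{EM02}), a triangulation of $M$ in which the $\overline{M(k)}$ are subcomplexes (the paper cites \cite{J83} for this), and the verification of Definition \ref{finitebranchedsimp} for $\Pi_E\vert\Delta_E$ --- but it takes a genuinely different route at the Poincar\'e duality step. The paper (Proposition \ref{poincaredual}) never makes $\Delta_E$ smooth: it observes only that the points of $\Delta_E$ lying over zeros of order at least two form a subset of codimension four in $E$, represents classes in $H_2(E,\mathbb{Z})$ by smooth surfaces $\alpha:B\to E$ generically avoiding that subset and meeting $\Delta_E$ transversely inside fibers, and computes the intersection number against ${\rm deg}(\alpha^*\nu^*)$ using the tautological trivialization of $\nu^*$ on $E-\Delta_E$ together with a rotation-number computation at a simple zero (the four-pronged singularity of the flat metric gives rotation number $+1$ for the cotangent trivialization). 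You instead smooth $\Delta_E$ globally by making the evaluation section $s_E$ of $\nu^*$ transverse to the zero section and invoke the standard fact that the zero locus of a transverse section of a complex line bundle is dual to $c_1$; this buys a cleaner duality argument, at the price of the constrained jet-transversality you honestly flag as the main obstacle. That obstacle in fact dissolves, because the stratified transversality you already impose implies your section transversality: in the Weierstrass normal form $w^{m}+a_{m-1}(t)w^{m-1}+\cdots+a_0(t)$ at a zero of order $m=k+1$ over $M(k)$ (normalize the $w^{m-1}$-coefficient away; since the family is only smooth in $t$, one needs Weierstrass preparation with smooth parameter dependence here), the local equations of the stratum are $a_0=\cdots=a_{k-1}=0$, so transversality of $F$ to ${\cal S}(k)$ makes $(da_0,\dots,da_{k-1})$ surjective onto $\mathbb{C}^{k}$ and in particular makes $da_0$ surjective onto $\mathbb{C}$, which is exactly the transversality of $s_E$ at that point (at simple zeros $\partial_w s_E\neq 0$ already suffices). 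You can therefore delete the separate codimension count on the rank-one locus and the second generic perturbation, and run the entire proof off the single transversality statement of Proposition \ref{transverse}; with your orientation check (the complex normal orientation of $\Delta_E$ agreeing with the orientations used in Lemma \ref{cycle}) the argument then closes. In short: same preparation, two different but both valid duality arguments --- yours needs (and, as just explained, automatically gets) global smoothness of $\Delta_E$, while the paper's intersection-theoretic computation tolerates the a priori singular locus by pushing it into codimension four.
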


We are now ready to show Theorem \ref{pushforward} under the assumption
that Theorem \ref{branchedpoincare} holds true.

\begin{proof}[Proof of Theorem \ref{pushforward}]
  Let $k\leq g-2$, let $M$ be a closed oriented manifold of dimension $2k$ 
  and let $f:M\to {\cal M}_g$ be a smooth map. By
  Theorem \ref{branchedpoincare}, up to changing $f$ by
  a homotopy which does not change the pull-back homomorphism
$f^*:H^*({\cal M}_g,\mathbb{Q})\to H^*(M,\mathbb{Q})$,  
we may assume that there exists a lift $F:M\to {\cal H}_*$ of
$F$ so that the pull-back
$\Delta_E$ of $\Delta\subset
(P\circ \Xi)^*{\cal C}$ by $F$ is a branched multisection of
$E\to M$ whose homology class is Poincar\'e dual to $c_1(\nu^*)$.

Thus if we denote by $[E],[M]$ the fundamental
class of $E,M$ then
by the definition of $\kappa_k$, we have
\begin{equation}\label{firstidentity}
  f^*\kappa_k[M]=(-1)^{k+1}c_1(\nu^*)^{k+1}[E]=
(-1)^{k+1}  c_1(\nu^*)^k(\Delta_E).\end{equation}
On the other hand, the restriction of the map
$\Pi_E:E\to M$ to $\Delta_E$ is a branched covering of degree $2g-2$.
Thus if we let as before 
$\tau_E:H^*(\Delta_E,\mathbb{Q})\to H^*(M,\mathbb{Q})$
the transfer map, then we have
\begin{equation}\label{secondidentity}
  c_1(\nu^*)^k(\Delta_E)=\tau_E(c_1(\nu^*)^k)[M].\end{equation}
As we evaluate a top degree cohomology class on fundamental cycles,
this identity is essentially the definition of the transfer map. 

The equations (\ref{firstidentity}) and (\ref{secondidentity})
and naturality of the construction shows that the map $F$ induces
a commutative diagram with the properties given in
the diagram (\ref{pullbackdiagram}). As $M$ was an arbitrary
smooth manifold of dimension $2k$ and $f:M\to {\cal M}_g$
was an arbitrary continuous map, this completes the proof of
Theorem \ref{pushforward}.
\end{proof}

We are left with the proof of Theorem \ref{branchedpoincare},
which is established with a general position argument.
Namely, 
the complex stratification ${\cal P}=\cup_k{\cal P}(k)$ lifts to an 
$S^1$-invariant stratification 
${\cal S}=\cup_k{\cal S}(k)$, with strata of even real codimension. 
The stratum ${\cal S}(k)$ is a locally closed 
smooth suborbifold of ${\cal S}$, and 
the closure of ${\cal S}(k)$ equals 
\[\overline{{\cal S}(k)}=\cup_{m\geq k}{\cal S}(m).\] 
Thus the strata define a stratification
in the sense of Chapter 2 of \cite{EM02}.

Our next step is to put the lift $F$ of the classifying map $f$ 
into general position with respect to
the stratification of ${\cal S}$. To this end define a smooth map
$G:M\to {\cal S}$ to be \emph{transverse} to the 
stratification ${\cal S}=\cup_k{\cal S}(k)$ if $G$
is transverse to each stratum, 
viewed as a smooth suborbifold of ${\cal S}$. The following is 
an immediate consequence of Thom's transversality theorem.
We refer to Chapter 2 of \cite{EM02} for details on maps transverse
to stratified manifolds (or orbifolds). To avoid technical issues
arising from the existence of orbifold points, here and in the sequel
we pass to a finite manifold cover of ${\cal S}, {\cal P}, {\cal C}, {\cal M}_g$
whenever we use results from differential topology or complex analysis
and observe that all constructions can be made equivariant with respect 
to a finite group of structure preserving maps.

\begin{proposition}\label{transverse}
The map $F$ can be changed with a homotopy to a smooth map which is
transverse to the strata
of the stratification of ${\cal S}$. In particular,
the preimages of the strata ${\cal S}(k)$ define a stratification of 
$M$.
\end{proposition}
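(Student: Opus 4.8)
The plan is to deduce Proposition \ref{transverse} directly from Thom's parametric transversality theorem applied to the stratified target ${\cal S}=\cup_k{\cal S}(k)$. Recall that we have already arranged, by passing to a finite manifold cover, that ${\cal S}$ is a smooth manifold carrying a finite group action of structure-preserving maps, that each stratum ${\cal S}(k)$ is a locally closed smooth submanifold of even codimension $2k$, and that these strata form a Whitney-type stratification in the sense of Chapter 2 of \cite{EM02}, with $\overline{{\cal S}(k)}=\cup_{m\geq k}{\cal S}(m)$. The source $M$ is a smooth closed manifold. Transversality to a stratified space means transversality to each stratum viewed as an ordinary smooth submanifold.

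First I would invoke the standard transversality theorem for a single smooth submanifold: given a smooth map $G:M\to {\cal S}$ and a smooth submanifold $N\subset {\cal S}$, an arbitrarily small perturbation of $G$ can be made transverse to $N$, and the set of such transverse maps is open and dense in the $C^\infty$ topology. Since there are only finitely many strata ${\cal S}(k)$ (indexed by $0\leq k\leq 2g-2$), I would apply this finitely many times: the set of maps transverse to \emph{all} strata simultaneously is a finite intersection of open dense sets, hence dense (indeed residual), so a transverse representative $G$ exists in every neighborhood of $F$. Because $M$ is compact, a sufficiently small perturbation stays homotopic to $F$, which gives the first assertion. To respect the orbifold structure I would carry this out equivariantly on the finite cover: Thom transversality holds in the equivariant setting for a finite group action, or alternatively one perturbs on the cover and averages/uses the genericity of transverse maps to select one compatible with the finite symmetry group, exactly as the excerpt permits us to assume.

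For the second assertion, once $G$ is transverse to every stratum, standard transversality theory gives that each preimage $G^{-1}({\cal S}(k))$ is a smooth submanifold of $M$ of the same codimension $2k$ (when nonempty), and that the closure relations among the strata pull back: since $\overline{{\cal S}(k)}=\cup_{m\geq k}{\cal S}(m)$ and $G$ is transverse to each piece, one obtains $\overline{G^{-1}({\cal S}(k))}\subseteq \cup_{m\geq k}G^{-1}({\cal S}(m))$. Thus the collection $\{G^{-1}({\cal S}(k))\}_k$ is a decomposition of $M$ into locally closed smooth submanifolds satisfying the frontier condition, i.e.\ a stratification of $M$ in the sense used for ${\cal S}$.

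The main obstacle is not the transversality step itself, which is routine, but verifying that transversality to a \emph{stratified} target behaves well near the lower-dimensional strata, i.e.\ that transversality to each stratum separately yields a genuine stratification of $M$ with the correct frontier condition. The subtle point is that the strata ${\cal S}(k)$ are only locally closed and their closures are singular, so one must use the Whitney (or Thom) regularity of the stratification of ${\cal S}$ guaranteed by \cite{EM02} to ensure that the pulled-back pieces assemble coherently and that no pathology occurs where $G(M)$ approaches a boundary stratum; this is precisely where I would lean on the structural results of Chapter 2 of \cite{EM02} rather than on elementary transversality alone.
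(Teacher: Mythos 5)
Your proposal is correct and rests on the same underlying tool as the paper --- Thom transversality for stratified targets as developed in Chapter 2 of \cite{EM02} --- but you unpack by hand what the paper cites wholesale. The paper's proof is two lines: view $F$ as a section of the trivial bundle $W=M\times {\cal S}\to M$, observe that the products $M\times {\cal S}(k)$ stratify $W$, and invoke Theorem 2.3.2 of \cite{EM02}, which states that a section can be made transverse to a stratification by a small generic homotopy; both assertions of the proposition are read off from that one citation. You instead run classical single-submanifold transversality stratum by stratum and assemble the result using Baire category and Whitney regularity, which is a legitimate, more elementary route, with two points needing care. First, for a locally closed but non-closed stratum ${\cal S}(k)$, the set of maps transverse to it is residual (hence dense) but \emph{not} open in general --- openness of simultaneous transversality is precisely what Whitney condition (a) provides --- so your parenthetical ``indeed residual'' is the correct reading, and the phrase ``finite intersection of open dense sets'' should not be taken literally stratum by stratum. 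Second, for the last assertion, continuity alone yields only the inclusion $\overline{G^{-1}({\cal S}(k))}\subseteq \cup_{m\geq k}G^{-1}({\cal S}(m))$, whereas Definition \ref{stratified} demands equality; this follows from the local normal triviality of a Whitney stratification under transverse pullback (near a point of $G^{-1}({\cal S}(m))$ the preimage is a transverse slice of the normal structure of $\overline{{\cal S}(k)}$ along ${\cal S}(m)$), which is again part of the \cite{EM02} machinery you defer to, and which the paper's own proof also leaves inside the citation. On balance, your version buys a self-contained argument and an explicit treatment of equivariance on the finite manifold cover, while the paper's buys brevity by packaging all of these verifications into Theorem 2.3.2 of \cite{EM02}.
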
 
\begin{proof}
All we need to do is to observe that Thom's transversality theorem
can be applied directly to the situation at hand, 
as explained in Chapter 2 of \cite{EM02}.
Namely, write $W=M\times {\cal S}$. Then a 
continuous or smooth map 
$F:M\to {\cal S}$ is just a continuous or smooth section of the bundle $W\to M$.

Any continuous map from a closed smooth manifold to another smooth manifold
is homotopic to a smooth map, so we may assume that $F$ is smooth. 
The products $M\times {\cal S}(k)$ define
a stratification of $M\times {\cal S}$. By Theorem 2.3.2 of \cite{EM02},
the map $F$, viewed as a section of $W$, can be modified with a generic
small homotopy to be transverse to this stratification. This is 
just the statement of the proposition.
\end{proof}

Let $F:M\to {\cal S}$ be smooth and 
transverse to the stratification ${\cal S}=\cup_k{\cal S}(k)$. 
Then $\Pi_E:E=F^*(P\circ \Xi)^*{\cal C}\to M$ is a surface bundle over
$M$. As before, we denote by $\Delta_E$ the pull-back of the subvariety
$\Delta$ of $(P\circ \Xi)^*{\cal C}$. Then $\Delta_E$ is a closed subset
of $E$, and 
for each $x\in M$, the intersection of $\Delta_E$ with
$\Pi_E^{-1}(x)$ is the set of zeros of
the abelian differential $F(x)$.
The following proposition
concludes the proof of Theorem \ref{branchedpoincare}. 

\begin{proposition}\label{poincaredual}
$\Delta_E\subset E$ is the image of a branched multisection of $E\to M$ 
of degree $2g-2$ 
whose homology class is Poincar\'e dual to $c_1(\nu^*)$.
\end{proposition}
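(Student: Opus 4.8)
The plan is to realize $\Delta_E$ simultaneously as the zero set of a canonical section of $\nu^*$ and as a branched cover of $M$, and to read off the two assertions from these two descriptions. First I would construct the section: by Proposition \ref{localcomplete} the evaluation map $\theta(q,z)=q(z)$ is a holomorphic section of $\nu^*$ over $(P\circ\Xi)^*{\cal C}$ whose zero locus is $\Delta$. Pulling it back along the bundle map $\Psi\colon E\to (P\circ\Xi)^*{\cal C}$ induced by $F$ produces a section $s=\Psi^*\theta$ of $\nu^*$ over $E$ which is holomorphic on each fiber and restricts over $x\in M$ to the abelian differential $F(x)$ on $\Pi_E^{-1}(x)$; its zero set is exactly $\Delta_E$, and in each fiber it has $2g-2$ zeros counted with multiplicity.

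Next I would analyze the local structure. Fix $z\in\Delta_E$ lying over $x_0\in M$, at which $F(x_0)$ has a zero of multiplicity $m$, and choose a fiber coordinate $w$ centered at $z$. By Weierstrass preparation $s=u\cdot(w^{m}+a_{m-1}(x)w^{m-1}+\cdots+a_0(x))$ with $u$ a unit and the $a_i$ smooth functions of the base vanishing at $x_0$, so $\Delta_E$ is locally the pullback under $x\mapsto(a_0(x),\dots,a_{m-1}(x))$ of the universal root locus $\{(a,w):w^m+\cdots+a_0=0\}$, which is smooth and projects to $a$-space as a branched cover of degree $m$ branched over the multiple roots (the local degree of Lemma \ref{branchedcoveringdelta}). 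A computation in these coordinates shows that transversality of $F$ to the strata ${\cal S}(k)$ from Proposition \ref{transverse} is equivalent to $ds\neq 0$ along $\Delta_E$: for instance when $m=2$ one has $\partial_w s=0$ at the double root, while transversality to ${\cal S}(1)$ forces $\partial_x a_0\neq 0$ and hence $ds\neq 0$. Consequently $s$ is transverse to the zero section of $\nu^*$, so $\Delta_E$ is a smooth submanifold of $E$ of real codimension two, cooriented by the complex line bundle $\nu^*$.

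From transversality both assertions follow. Since $\Delta_E=s^{-1}(0)$ is a smooth, complex-cooriented codimension-two submanifold on which $s$ vanishes transversally, its fundamental class is Poincar\'e dual to the Euler class $e(\nu^*)=c_1(\nu^*)$. For the branched-cover structure I would use the stratification $M=\cup_k M(k)$ with $M(k)=F^{-1}({\cal S}(k))$, whose singular part $B:=\bigcup_{k\ge 1}M(k)$ has real codimension two; the local model identifies $\Pi_E|\Delta_E$ as a finite map of local degree $m$ at each order-$m$ zero, unbranched of degree $2g-2$ over $M-B=M(0)$. Choosing a triangulation of $M$ compatible with this stratification and lifting it through $\Pi_E|\Delta_E$ makes $\Delta_E$ a simplicial complex of homogeneous dimension $n$; taking $X^1=(\Pi_E|\Delta_E)^{-1}(B)$, the three conditions of Definition \ref{finitebranchedsimp} hold (saturation and codimension two are built in, there are exactly $2g-2$ sheets over $M(0)$, and small balls about a point of $B$ have path-connected complement of $B$ since $B$ is codimension two). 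By Lemma \ref{simplicial} the map $\Pi_E|\Delta_E$ is then a branched covering in the sense of Definition \ref{finitebranched} of degree $2g-2$, so $\Delta_E$ is the image of a branched multisection and, by Lemma \ref{cycle}, carries a class in $H_n(E,\mathbb{Z})$; a routine check that the complex coorientation of $\Delta_E$ agrees with the orientation it inherits as a branched multisection matches the two cycles.

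The main obstacle is the local computation that transversality of $F$ to the strata forces $ds\neq 0$. This is what makes $\Delta_E$ smooth and reduces the Poincar\'e-duality statement to the standard fact about transverse sections of complex line bundles; the general $m$ requires verifying, in the universal unfolding coordinates, that the normal directions to the stratum through $F(x_0)$ are precisely the lowest-order unfolding coefficients that control $ds$. The compatible triangulation needed to upgrade the smooth branched cover to an \emph{unfolded simplicial} branched covering is then a technical but routine matter, parallel to Example \ref{surfacebranched}.
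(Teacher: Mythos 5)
Your second half --- the triangulation of $M$ compatible with the stratification $M(k)=F^{-1}({\cal S}(k))$, the pulled-back simplicial structure on $\Delta_E$, and the verification of Definition \ref{finitebranchedsimp} with $X^1$ the part lying over $\bigcup_{k\geq 1}M(k)$ --- is essentially identical to the paper's construction of the branched multisection. For the Poincar\'e duality half you take a genuinely different route: you upgrade transversality of $F$ to the strata into transversality of the tautological section $s$ of $\nu^*$ to the zero section, so that $\Delta_E$ becomes a globally \emph{smooth} cooriented codimension-two submanifold, and then quote the standard fact that a transverse zero locus of a complex line bundle is dual to $c_1$. The paper never claims smoothness of $\Delta_E$ at multiple zeros: it only observes that the non-regular points form a codimension-four subset of $E$, perturbs an arbitrary surface representative $\alpha:B\to E$ to meet $\Delta_E$ transversally in finitely many simple zeros lying in fibers, and computes each local intersection index to be $+1$ via the rotation number of the canonical trivialization $\beta(z)=F(\Pi(z))\vert T_z(\mathrm{fiber})$ of $\nu^*$ on $E-\Delta_E$, using the four-pronged cone point of the flat metric at a simple zero. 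Your computation does go through --- the universal root locus is smooth, and transversality to the stratum through a multiplicity-$m$ zero forces the coefficient map to surject onto $\mathbb{C}^m$ modulo the tangent line of the multiplicity-$m$ locus (which is the $a_{m-1}$-direction), hence $\partial_x a_0\neq 0$ and $ds\neq 0$ --- so your argument buys a cleaner duality step and the stronger conclusion that $\Delta_E$ is smooth, at the cost of a derivative-level local model: you need that the map from a neighborhood of $F(x_0)$ in ${\cal S}$ to the unfolding coefficients $(a_0,\dots,a_{m-1})$ is a submersion carrying the strata of ${\cal S}$ into the multiplicity strata, which is an EMZ-type deformation statement that the paper invokes (in Lemma \ref{branchedcoveringdelta}) only in the weaker form of existence of nearby zero splittings. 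Two small corrections: for $m\geq 3$ transversality is strictly stronger than $ds\neq 0$ (it demands surjectivity onto $\mathbb{C}^{m-1}$, not merely $\partial_x a_0\neq 0$), so your claimed ``equivalence'' should be weakened to the implication you actually use; and since $s$ is only fiberwise holomorphic, at a multiple zero the nonvanishing of $ds$ comes entirely from base directions, so the coorientation check you defer (that the orientation induced by the identification of the normal bundle with $\nu^*\vert\Delta_E$ via $ds$ matches the one used in Lemma \ref{cycle}) is genuinely needed rather than routine decoration.
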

\begin{proof} By construction, for each 
$k$ the set $M(k)=F^{-1}{\cal S}(k)$ of all points whose image under 
$F$ is contained in the suborbifold ${\cal S}(k)$ of ${\cal S}$ is 
a submanifold of codimension $2k$, and $\overline{M(k)}=\cup_{u\geq k}M(u)$.

Since $M=\cup_kM(k)$ is a smooth stratified manifold, with strata 
$M(k)$ of codimension $2k$, we can find 
a triangulation $T$ of $M$ such that for each $k$, the subset 
$\overline{M(k)}$ is a subcomplex of $T$ of codimension $2k$. 
In particular, 
if $\sigma$ is a simplex all of whose vertices are contained in $\overline{M(k)}$, then
$\sigma\subset \overline{M(k)}$.
We refer to \cite{J83} for an explicit statement, for the history of the problem and  
for references to earlier work from which this statement can also be extracted
(perhaps less directly).

By construction, we have
\[M(k)=\{z\in M\mid \sharp \Pi^{-1}(z)\cap \Delta_E=2g-2-k\}.\]
More precisely, for all $k$ the restriction of 
$\Pi$ to $\Delta_E(k)=\Pi^{-1}(M(k))\cap \Delta_E$ 
is an unbranched covering of degree
$2g-2-k$. As a consequence, the preimage of a simplex $\sigma\subset T$
consists of precisely $2g-2-k$ simplices with disjoint interior
if $k$ is the maximum of all numbers $\ell\geq 0$ such that each
vertex of $\sigma$ is contained in $\overline{M(\ell)}$,
and the union of these simplices defines the structure of a simplicial
complex on $\Delta_E$ of homogeneous dimension $n$. Clearly
$\Delta_E$ is locally compact. 

If we define $\Delta_E^1\subset \Delta_E$
to be the subcomplex of all simplices which project into
$\overline{M(1)}$ then as $\overline{M(1)}$ is a subcomplex
of $M$ of codimension $2$, the set $\Delta_E^1$ is a subcomplex of $\Delta_E$ 
of codimension $2$.
Furthermore, as a point in $M(0)$ has precisely $2g-2$ preimages in $\Delta_E$, 
the restriction of $\Pi$ to
$\Delta_E-\Delta_E^1$ is an unbranched covering of degree $2g-2$.
This verifies the first property in Definition \ref{finitebranchedsimp}.

If $x\in \overline{M(1)}$ is arbitrary, then since 
$\overline{M(1)}$ is of codimension two in $M$, 
there exists a neighborhood basis 
for $x$ in $M$ 
consisting of sets $U_i$ such that $U_i-\overline{M(1)}$ is path
connected. 
Thus $\Pi\vert \Delta_E:\Delta_E\to M$ fulfills the requirements in 
Definition \ref{finitebranchedsimp} and hence it 
is an unfolded simplicial
branched covering.

By Theorem \ref{edmonds} and Lemma \ref{cycle}, the preimage of 
the triangulation $T$ under the map  
$\Pi\vert \Delta_E$ equips $\Delta_E$ with the structure of a cycle 
of dimension $n$. This cycle then defines a homology class
$[\Delta_E]\in H_{n}(E,\mathbb{Z})$. 
As the total space $E$ of the surface bundle 
is a closed oriented manifold, it is a Poincar\'e duality space. 
The Poincar\'e dual of $[\Delta_E]$ is a class in $H^2(E,\mathbb{Z})$. 

To compute this class, 
call a point in $\Delta_E$ regular if it is not a zero of order at least $2$ for 
a differential in the set $F(M)$. The union of the regular points is
an open submanifold of $E$ 
whose complement is of codimension four. 
Namely, since $\cup_{k\geq 1}M(k)$ is 
of codimension two in $M$ and since $\Delta_E$ intersects each
fiber of $E$ in a finite set, the set of singular points in $\Delta_E$,
that is, points which are
not regular, is a simplicial subcomplex of $\Delta_E$ of codimension $2$
and hence it has codimension $4$ in $E$. 
Hence if $B$ is a closed surface and if $\alpha:B\to E$ is a smooth map, 
then by transversality, $\alpha$ can be modified
with a small homotopy to a smooth map whose intersection with  
$\Delta_E$ consists of 
finitely many regular transverse intersection points. For transversality, observe that
the complement of the singular points of $\Delta_E$ is a smooth submanifold of $E$.
With a further smooth homotopy, we can assure that for all $u\in B$ so that
$\alpha(u)\in \Delta_E$, there is a neighborhood of $u$ in $B$ which is mapped
by $\alpha$ diffeomorphically onto a neighborhood of $\alpha(u)$ in
the fiber $\Pi^{-1}(\Pi(\alpha(u))$. 

In view of the fact that every second integral homology
class in $E$ can be represented by a smooth map from 
a closed oriented surface (for smoothness, note that every continuous
map $B\to E$ is homotopic to a smooth map), 
to compute the class in 
$H^2(E,\mathbb{Z})$ which is Poincar\*e dual to $\Delta_E$ it suffices to show
that for each such smooth map $\alpha:B\to E$
the number of such intersection
points between $\alpha(B)$ and $\Delta_E$, counted with sign and 
multiplicity,
equals the degree of the pull-back 
line bundle $\alpha^*\nu^*$ on $B$.

To this end observe that
for each $z\in E-\Delta_E$, the restriction of the 
holomorphic one-form $F(\Pi(z))$ to the tangent
space of the fiber of $E\to B$ at $z$ does not vanish
and hence it defines a nonzero element $\beta(z)$ 
of the fiber of 
$\nu^*$ at $z$. 
Associating to $z$ the linear functional $\beta(z)$ defines a 
trivialization of $\nu^*$ on $E-\Delta_E$.

 At each regular  
point $y\in \Delta_E$, 
the restriction of this trivialization
to the oriented fiber $\Pi^{-1}(\Pi(y))$ has rotation
number $1$ about $y$ with respect to a trivialization which
extends across $y$.
Namely, this is equivalent to the statement that the 
divisor on the Riemann surface $\Pi^{-1}(\Pi(y))$ 
defined by $F(\Pi(y))$ defines the holomorphic cotangent
bundle of $\Pi^{-1}(\Pi(y))$. 

Although this is well known, we provide the short argument. 
The holomorphic one-form $F(\Pi(y))$ on the surface
$\Pi^{-1}(\Pi(y))$ defines an euclidean metric on 
$\Pi^{-1}(\Pi(y))-\Delta_E$ which extends to a singular
euclidean metric on all of $\Pi^{-1}(\Pi(y))$.
As $y$ is a simple zero of  
the abelian differential $F(\Pi(y))$ by assumption,  
it is a four-pronged singular point for this
singular euclidean metric. Now let $D\subset \Pi^{-1}(\Pi(y))$ be a small
disk about $y$ not containing any other
point of $\Delta_E$, with boundary $\partial D$. 
Choose a nowhere 
vanishing vector field $\xi$ on $\partial D$ with $F(\Pi(y))(\xi)>0$.
As $y$ is a four-pronged singular point, the rotation number of 
$\xi$ with respect to a vector field which extends to a trivialization 
of the tangent bundle of $D$ equals $-1$. By duality,  
the rotation number of the trivialization of the cotangent bundle 
$\nu^*$ of  $\Pi^{-1}(\Pi(y))-\Delta_E$ defined by $F(\Pi(y))$ 
on $\partial D$ with respect to a trivialization of the cotangent bundle
which extends to all of $D$ equals $1$.

As the restriction of $\nu^*$ to $E-\Delta_E$ admits a natural trivialization,
the same holds true for the pull-back of $\nu^*$ to 
$B-\alpha^{-1}(\Delta_E)$. 
Furthermore,  by assumption on $\alpha$, 
for each $u\in B$
with $\alpha(u)\in \Delta_E$, the induced trivialization of 
the pull-back bundle $\alpha^*(\nu^*)$ 
on $B-\alpha^{-1}(\Delta_E)$ has rotation number $1$ 
or  $-1$ at $u$  with respect to a trivialization of
$\alpha^*\nu^*$ on a neighborhood $u$.  Here the sign 
depends on whether $\alpha$, viewed as a diffeomorphism from 
a neighborhood of $u$ in $B$ 
onto a neighborhood of $\alpha(u)$ in $\Pi^{-1}(\Pi(\alpha(u)))$, 
is orientation preserving or 
orientation reversing. This just means that 
the degree of the line bundle
$\alpha^*\nu^*$ on $B$ equals the number of 
intersection points of $\alpha(B)$ with $\Delta_E$ counted with signs (and
multiplicities- by the assumption on $\alpha$, 
these multiplicities are all one). In other words,
the degree of the line bundle $\alpha^*\nu^*$ on $B$ 
equals the intersection
number $\alpha(B)\cdot \Delta_E$. As this holds true for all 
smooth maps from a closed oriented surface into $E$, this yields
that indeed, $\Delta_E$ is Poincar\'e dual to $c_1(\nu^*)$.
\end{proof}

\section{Mumford Morita Miller classes
  are bounded}\label{mmm}

The goal of this section is to 
give a different perspective on the subvariety $\Delta\subset P^*{\cal C}$
which can be used to compute the 
Mumford Morita Miller
classes. This leads to the proof of Theorem \ref{main0}.

Consider the
$2g-2$-fold product ${\cal C}\times \dots \times {\cal C}$ of the
universal curve $\Upsilon:{\cal C}\to {\cal M}_g$. This is a smooth
complex orbifold.
The \emph{$2g-2$-fold fiber product}
${\cal Z}$ of ${\cal C}$ is defined as
\[{\cal Z}=\{(z_1,\dots,z_{2g-2})\in {\cal C}\times \cdots \times {\cal C}\mid
  \Upsilon(z_i)=\Upsilon(z_j) \text{ for all }i,j\}.\]
Then ${\cal Z}$ is a smooth complex orbifold. The next lemma
is meant in the orbifold sense, and it is immediate
from the definitions. For its formulation, let $\psi_i:{\cal Z}\to 
{\cal C}$ be the projection onto the $i$-th factor.

\begin{lemma}\label{fiberwise}
The fiberwise tangent bundle of the fibration ${\cal Z}\to {\cal M}_g$
is the holomorphic vector bundle ${\cal V}=\oplus_i \psi_i^*\nu
\to {\cal Z}$ of rank
$2g-2$, with structure group $(\mathbb{C}^*)^{2g-2}$. 
\end{lemma}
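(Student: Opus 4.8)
The plan is to exploit the product structure of the fibers directly, since the statement is purely a linear-algebra observation about tangent spaces of a fiber product. First I would describe the fibration ${\cal Z}\to {\cal M}_g$ concretely: its fiber over a curve $X\in {\cal M}_g$ is the $2g-2$-fold Cartesian power $X^{2g-2}$, and a point of this fiber is a tuple $(z_1,\dots,z_{2g-2})$ with $\Upsilon(z_i)=X$ for all $i$. The fiberwise (vertical) tangent bundle of ${\cal Z}\to {\cal M}_g$ is by definition the bundle whose fiber over such a tuple is the tangent space to $X^{2g-2}$ at $(z_1,\dots,z_{2g-2})$.

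The key step is the canonical splitting of the tangent space of a product. At $(z_1,\dots,z_{2g-2})\in X^{2g-2}$ one has the natural isomorphism
\[T_{(z_1,\dots,z_{2g-2})}X^{2g-2}\cong \bigoplus_i T_{z_i}X,\]
where the $i$-th summand is the image of the differential of the inclusion of the $i$-th factor. Now the fiber of $\nu$ at a point $z\in {\cal C}$ is exactly the tangent space $T_z(\Upsilon(z))$, so the fiber of the pull-back $\psi_i^*\nu$ at $(z_1,\dots,z_{2g-2})$ is $T_{z_i}X$, because $\psi_i$ sends the tuple to $z_i$. Matching summand by summand identifies the vertical tangent bundle with $\bigoplus_i \psi_i^*\nu={\cal V}$; this identification is fiberwise complex linear and varies holomorphically, so it is an isomorphism of holomorphic vector bundles. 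Since there are $2g-2$ factors and each $\psi_i^*\nu$ is a line bundle, ${\cal V}$ has rank $2g-2$.

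For the structure group, I would observe that ${\cal V}$ is presented as an honest direct sum of $2g-2$ holomorphic line bundles, so it admits local holomorphic frames in which each summand $\psi_i^*\nu$ is spanned by a single section; the transition functions between two such frames are therefore diagonal and take values in $(\mathbb{C}^*)^{2g-2}\subset GL(2g-2,\mathbb{C})$. Hence the structure group reduces to the diagonal torus $(\mathbb{C}^*)^{2g-2}$, as claimed.

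The proof has essentially no obstacle, everything being immediate from the definitions, as the statement already advertises. The only point requiring a word of care is the orbifold framing: one should note that all of these canonical identifications are equivariant with respect to the finite groups giving rise to the orbifold structures, so that they descend to genuine identifications of orbifold holomorphic vector bundles. This is routine given the conventions on orbifold bundles fixed earlier in the paper.
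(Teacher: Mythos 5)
Your proposal is correct and coincides with the paper's treatment: the paper offers no written proof, stating only that the lemma ``is immediate from the definitions,'' and your argument is precisely the expected unwinding --- the product splitting $T_{(z_1,\dots,z_{2g-2})}X^{2g-2}\cong\bigoplus_i T_{z_i}X$, the identification of each summand with $\psi_i^*\nu$, and the reduction of the structure group to the diagonal torus via frames adapted to the line bundle summands. Your closing remark on equivariance under the finite groups defining the orbifold structure is also consistent with the conventions the paper fixes for orbifold bundles.
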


The symmetric group $\mathfrak{S}_{2g-2}$ in $2g-2$  variables acts on 
the fibers of ${\cal Z}$ as a group of 
biholomorphic transformations permuting the factors, and
this action defines a fiber preserving action on 
${\cal Z}$ by biholomorphic transformations. This action is 
however not free. The quotient 
\[{\cal D}={\cal Z}/\mathfrak{S}_{2g-2} \] of ${\cal Z}$
by this action will be called 
the \emph{bundle of effective divisors of degree 
$2g-2$}. The space ${\cal D}$ is a smooth complex orbifold
and a fiber bundle over ${\cal M}_g$ whose
fiber is a compact complex smooth orbifold and hence a
singular complex variety. The singularities of ${\cal D}$ are 
precisely the orbifold points. This means  that a generic fiber of ${\cal D}$
intersects the singular locus of ${\cal D}$ in the projection 
of the fixed point set of $\mathfrak{S}_{2g-2}$ in the
fiber of ${\cal Z}$ over the same point.

Using the previous notations, the variety ${\cal D}$ is a stratified complex
space. More precisely, we have
\[{\cal D}=\cup_k{\cal D}(k)\]
where ${\cal D}(k)$ is the 
quotient of the $\mathfrak{S}_{2g-2}$-invariant
subspace of ${\cal Z}$ of points cut out by $k$ independent equations
of the form $z_i=z_j$ $(i\not=j)$. Here $z_i$ $(1\leq i\leq 2g-2)$ is a 
point in $i$-th factor of the fiber of ${\cal Z}$. 
The open stratum is the projection of the set of points in ${\cal Z}$ with
pairwise distinct fiber components,
and the stratum ${\cal D}(1)\subset {\cal D}$ 
of codimension one is the projection 
of the set of all points in ${\cal Z}$ 
such that precisely two of the fiber components coincide.

A point in the fiber ${\cal D}_X$ 
of the bundle ${\cal D}$ over a Riemann surface $X\in {\cal M}_g$ 
can be thought of as an effective divisor of degree $2g-2$ on $X$. 
Let ${\cal D}_X^\prime\subset {\cal D}_X$ be the subset of 
${\cal D}_X$ of divisors dual to the cotangent bundle of $X$.
Then ${\cal D}^\prime=\cup_X{\cal D}_X\subset {\cal D}$ is a closed subset of ${\cal D}$.
We have

\begin{lemma}\label{smooth}
${\cal D}^\prime$ is a complex subvariety of ${\cal D}$. The map 
which 
associates to a projective differential $q\in {\cal P}$ the divisor defined by $q$
is a holomorphic bijection ${\cal P}\to {\cal D}^\prime$.
\end{lemma}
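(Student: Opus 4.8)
The plan is to exhibit the map in the statement explicitly, verify that it is a fiberwise bijection onto ${\cal D}^\prime$ by elementary Riemann surface theory, upgrade this to a holomorphic map using the relative divisor $\Delta$, and finally deduce that ${\cal D}^\prime$ is a subvariety by a properness argument. First I would define the map $\Phi:{\cal P}\to {\cal D}$ over ${\cal M}_g$ by sending a projective differential $q$, represented by a nonzero holomorphic one-form $\omega$ on the curve $X=P(q)$, to its divisor of zeros $\operatorname{div}(\omega)$. Since $\omega$ is a section of the cotangent bundle $\nu^*\vert X$, which has degree $2g-2$, this is an effective divisor of degree $2g-2$, and scaling $\omega$ does not move its zeros, so $\Phi$ is well defined; by construction $\Phi(q)$ has associated line bundle isomorphic to $\nu^*\vert X$ and hence lies in ${\cal D}^\prime$.

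On a fixed fiber the map $\Phi$ is the classical identification of $\mathbb{P}(H^0(X,\nu^*\vert X))$ with the complete linear system of the canonical divisor. Injectivity follows because if $\operatorname{div}(\omega_1)=\operatorname{div}(\omega_2)$ then $\omega_1/\omega_2$ is a meromorphic function with trivial divisor on the compact curve $X$, hence a nonzero constant, so $[\omega_1]=[\omega_2]$. Surjectivity follows because a divisor $D$ in ${\cal D}^\prime_X$ is by definition effective of degree $2g-2$ with associated line bundle isomorphic to the cotangent bundle, and transporting the tautological section of that line bundle through such an isomorphism produces a holomorphic one-form with divisor $D$. Thus $\Phi$ is a bijection of ${\cal P}$ onto ${\cal D}^\prime$.

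The analytic heart is the holomorphicity of $\Phi$. By Proposition \ref{localcomplete} the locus $\Delta\subset P^*{\cal C}$ is a codimension-one local complete intersection, that is, a relative effective Cartier divisor for the family of curves $\Pi:P^*{\cal C}\to {\cal P}$, and by Lemma \ref{branchedcoveringdelta} together with the flatness established in Corollary \ref{cohenmac} and Proposition \ref{miracle} it is finite and flat of degree $2g-2$ over ${\cal P}$. The universal property of the relative symmetric product of a smooth family of curves then associates to $\Delta$ a holomorphic section ${\cal P}\to \operatorname{Sym}^{2g-2}(P^*{\cal C}/{\cal P})$, which is the pull-back of ${\cal D}$ to ${\cal P}$, and whose composition with the natural projection to ${\cal D}$ is precisely $\Phi$. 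Concretely, in a local holomorphic trivialization an abelian differential is $f(z,q)\,dz$ with $f$ holomorphic in the fiber coordinate $z$ and in the parameter $q$, and the unordered zeros of $f$ vary holomorphically because their elementary symmetric functions do; this is exactly the assertion that the induced map into the symmetric product is holomorphic.

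Finally, to see that ${\cal D}^\prime$ is a complex subvariety I would argue by properness. Both ${\cal P}\to {\cal M}_g$, a $\mathbb{P}^{g-1}$-bundle, and ${\cal D}\to {\cal M}_g$ have compact fibers, and $\Phi$ is a morphism over ${\cal M}_g$; hence $\Phi$ is proper, and by Remmert's proper mapping theorem its image ${\cal D}^\prime=\Phi({\cal P})$ is a closed complex analytic subvariety of ${\cal D}$. I expect the main obstacle to be the holomorphicity of $\Phi$ along the loci where the zeros of the differential collide, that is, over the higher strata ${\cal P}(k)$ with $k\geq 1$. Away from these the zeros are simple and depend holomorphically on $q$ by the implicit function theorem, whereas across the collisions one must pass to the unordered, symmetric viewpoint; this is precisely what the relative Cartier divisor $\Delta$ and the universal property of the relative symmetric product are designed to furnish, so I would route the argument through them rather than through an ad hoc analysis of the branch locus.
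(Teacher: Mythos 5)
Your proof is correct, and it reaches the conclusion by a genuinely different route than the paper at the two places where the paper's argument is soft. The fiberwise bijection is handled identically in both (uniqueness of the line bundle with a section vanishing on a prescribed effective divisor, and $\omega_1/\omega_2$ constant when the divisors agree), but for holomorphicity the paper argues in two steps: it first observes, using Proposition \ref{localcomplete}, that $\Pi\vert\Delta$ is an \emph{unbranched} holomorphic covering over the open stratum ${\cal P}(0)$, so that $\alpha$ is a biholomorphism onto its image there, and then extends across the codimension-one locus $\cup_{k\geq 1}{\cal P}(k)$ by combining global continuity of $\alpha$ with the Riemann extension theorem. You instead get holomorphicity everywhere at once: you promote $\Delta$ to a relative effective Cartier divisor, finite and flat of degree $2g-2$ over ${\cal P}$ (flatness is exactly what the paper establishes via miracle flatness in Proposition \ref{miracle}, so your citation is legitimate), and invoke the universal property of the relative symmetric product, with the elementary-symmetric-function (or Newton power sum via the argument principle) computation as the concrete local justification -- this treats the collision loci uniformly rather than by removable-singularity extension. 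You also make explicit something the paper leaves implicit: the subvariety claim for ${\cal D}^\prime$, which you deduce from properness of $\Phi$ over ${\cal M}_g$ and Remmert's proper mapping theorem, whereas the paper only records beforehand that ${\cal D}^\prime$ is closed and lets the claim ride on the homeomorphism $\alpha$. What each approach buys: the paper's is lighter on machinery (dense-open holomorphy plus continuity plus Riemann extension), while yours is more structural, generalizes immediately to any finite flat family of fiberwise divisors, and delivers the subvariety statement as a clean corollary rather than an afterthought. One small caution: like the paper, you should pass to finite manifold covers to deal with orbifold points before applying Remmert or the symmetric-product universal property, and check equivariance -- the paper states this convention once in Section \ref{branchedmultisection}, and your argument is compatible with it.
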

\begin{proof} 
Given an effective
divisor of degree $2g-2$ on $X$, there exists
a unique holomorphic line bundle on $X$ of degree $2g-2$ 
which admits a holomorphic section vanishing to prescribed order on the divisor. If
the divisor is dual to the cotangent bundle of $X$, then this section is a holomorphic one-form,
uniquely determined by its zero set
up to a constant multiple in  $\mathbb{C}^*$, that is,
the projection of this one-form into
${\cal P}$ does not depend on choices. This shows that 
the map $\alpha:{\cal P}\to {\cal D}^\prime$ 
which associates to a projective abelian differential $q$ on a 
Riemann surface $X$ the divisor of the
zeros of $q$ is a bijection. 

The map $\alpha$ is also easily seen to be 
continuous, with continuous inverse, and hence it is a 
homeomorphism ${\cal P}\to {\cal D}^\prime$.

We have to compare the complex structures on ${\cal P}$ and on
${\cal D}^\prime$. To this end note from Proposition \ref{localcomplete}
that the set $\Delta\subset P^*{\cal C}$ 
whose intersection with a fiber of $P^*{\cal C}$
over a projective abelian differential $q\in {\cal P}$ 
is just the set of zeros of $q$ is a complex subvariety of
$P^*{\cal C}$. Furthermore, the proof of Proposition \ref{localcomplete}
shows that the restriction of the projection
$\Pi:P^*{\cal C}\to {\cal P}$ to the open dense suborbifold
$\Delta\cap \Pi^{-1}({\cal P}(0))$ is a holomorphic
unbranched covering of degree $2g-2$.

By the definition of the complex structure on ${\cal D}$ as a quotient of the 
complex structure on ${\cal Z}$ as a holomorphic fiber bundle,
with fiber a product of $2g-2$ complex curves, 
this implies that the restriction of the map $\alpha$ to the
open stratum ${\cal P}(0)\subset {\cal P}$ 
is a biholomorphism onto its image. Thus 
the globally continuous map $\alpha:{\cal P}\to {\cal D}$
is holomorphic on the complement of a subvariety of codimension
one (the variety $\cup_{k\geq 1}{\cal P}(k)$). 
The Riemann extension theorem then shows that $\alpha$ is holomorphic on
all of ${\cal P}$.
\end{proof}

Since ${\cal D}$ is the quotient of ${\cal Z}$ under the action of the 
finite group $\mathfrak{S}_{2g-2}$ 
of biholomorphic automorphisms, the rational cohomology of ${\cal Z}$ and of 
${\cal D}$ are related by the pull-back homomorphism. 
For the formulation of the following well known result,  we denote by 
$H^*({\cal Z},\mathbb{Q})^{{\mathfrak S}_{2g-2}}$ the 
$\mathfrak{S}_{2g-2}$-invariant rational cohomology
of ${\cal Z}$.

\begin{lemma}\label{invariant}
$H^*({\cal D};\mathbb{Q})\cong H^*({\cal Z},\mathbb{Q})^{\mathfrak{S}_{2g-2}}$.
\end{lemma}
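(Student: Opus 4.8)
The plan is to show that the pull-back induced by the quotient map is an isomorphism onto the invariant subring, using the standard transfer argument for the quotient of a space by a finite group with $\mathbb{Q}$-coefficients. Write $G=\mathfrak{S}_{2g-2}$ and let $\pi:{\cal Z}\to {\cal D}={\cal Z}/G$ be the orbit map. First I would observe that since $\pi\circ h=\pi$ for every $h\in G$, naturality of the pull-back gives $h^*\circ \pi^*=\pi^*$, so that the image of $\pi^*:H^*({\cal D},\mathbb{Q})\to H^*({\cal Z},\mathbb{Q})$ is contained in the invariant subring $H^*({\cal Z},\mathbb{Q})^{G}$. It therefore remains to produce an inverse on the invariants.

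The key tool is a transfer homomorphism $\tau:H^*({\cal Z},\mathbb{Q})\to H^*({\cal D},\mathbb{Q})$ with the two properties $\tau\circ \pi^*=|G|\cdot \mathrm{id}$ and $\pi^*\circ \tau=\sum_{h\in G}h^*$. I would construct $\tau$ by the classical averaging procedure on singular cochains: a $G$-invariant singular cochain on ${\cal Z}$ descends to a cochain on ${\cal D}$, and averaging an arbitrary cochain over $G$ (legitimate because $|G|=(2g-2)!$ is invertible in $\mathbb{Q}$) produces such an invariant cochain; the induced map on rational cohomology is $\tau$, and the two displayed identities are immediate from the construction. This construction uses nothing about freeness of the action. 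One may note in passing that, since the orbit of a generic point of ${\cal Z}$ consists of $(2g-2)!$ distinct points, the orbit map $\pi$ is a finite branched covering of degree $(2g-2)!$ in the sense of Definition \ref{finitebranched}, so that Theorem \ref{edmonds} independently furnishes a transfer realizing at least the first identity.

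Granting the transfer, the conclusion is formal. The identity $\tau\circ \pi^*=|G|\cdot \mathrm{id}$ shows that $\pi^*$ is injective. For surjectivity onto the invariants, let $\alpha\in H^*({\cal Z},\mathbb{Q})^{G}$; then using the second identity, $\pi^*\bigl(\tfrac{1}{|G|}\tau(\alpha)\bigr)=\tfrac{1}{|G|}\sum_{h\in G}h^*\alpha=\alpha$, so $\alpha$ lies in the image of $\pi^*$. Together with the inclusion of the image in the invariants established in the first paragraph, this shows that $\pi^*$ is an isomorphism $H^*({\cal D},\mathbb{Q})\xrightarrow{\ \cong\ }H^*({\cal Z},\mathbb{Q})^{G}$, as claimed.

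The main point requiring care — the hard part — is the orbifold setting together with the fact that the $G$-action is not free, since it fixes the diagonal loci where some $z_i=z_j$. I would handle this exactly as elsewhere in the paper: pass to a finite manifold cover of ${\cal M}_g$ over which ${\cal Z}$ and ${\cal D}$ become genuine (possibly singular) varieties, and make all constructions equivariant with respect to the finite structure group. The essential robustness is that the singular-cochain averaging construction of $\tau$ is insensitive to the branch locus and is valid for any finite group acting on any reasonable space; this is precisely why the $\mathbb{Q}$-coefficient statement persists even though $\pi$ is genuinely branched along the image of the fixed-point set.
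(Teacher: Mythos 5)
Your overall route is the same as the paper's: the proof in the text simply observes that the orbit map $\pi:{\cal Z}\to {\cal D}$ is a finite branched covering of degree $(2g-2)!$ in the sense of Definition \ref{finitebranched} and invokes the transfer of Theorem \ref{edmonds}, citing \cite{Ed76} for the fact that for quotients by finite group actions the transfer restricts to an isomorphism on the invariant cohomology. Your formal deduction from the two identities $\tau\circ\pi^*=\vert G\vert\cdot\mathrm{id}$ and $\pi^*\circ\tau=\sum_{h\in G}h^*$ is exactly the content hidden in that citation, and your treatment of the orbifold issue by passing to a finite manifold cover matches the paper's standing convention.

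There is, however, one step in your primary construction that does not hold as stated: the claim that a $G$-invariant singular cochain on ${\cal Z}$ descends to a cochain on ${\cal D}$, from which you assert both identities are ``immediate.'' For a branched quotient this fails at the cochain level. A singular simplex in ${\cal D}$ meeting the branch locus (the image of the diagonals $z_i=z_j$) need not lift to ${\cal Z}$ at all; and even when lifts exist, two lifts of a connected simplex crossing the branch locus need not differ by a single element of $G$ --- consider $\mathbb{Z}/2$ acting on $\mathbb{C}$ by $z\mapsto -z$ with orbit map $z\mapsto z^2$: a path through the origin has lifts that switch square-root branches, and such a lift is not the image of another lift under the group action, so $G$-invariance of the cochain does not make the descended value well defined. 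This is precisely the difficulty that Edmonds' transfer for branched coverings is built to circumvent. The gap is reparable in three standard ways: quote \cite{Ed76}, where the identification of $H^*({\cal Z}/G,\mathbb{Q})$ with the invariants is proved for orbit maps of finite group actions (this is what the paper does; note that your own parenthetical appeal to Theorem \ref{edmonds} only secures the first identity $\tau\circ\pi^*=\vert G\vert\cdot\mathrm{id}$, i.e.\ injectivity, whereas surjectivity onto the invariants needs the second identity, so the aside does not by itself close the argument); or replace singular cochains by \v{C}ech or sheaf cohomology, where $H^*({\cal Z}/G,\mathbb{Q})\cong H^*({\cal Z},\mathbb{Q})^G$ for a finite group acting on a paracompact space is classical; or make the action simplicial by an equivariant subdivision and descend invariant simplicial cochains, where the lifting problem disappears because the quotient map is then simplicial and surjective on simplices. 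With any one of these substitutions your argument is complete and coincides with the paper's.
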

\begin{proof}
The projection ${\cal Z}\to {\cal D}$ is a finite branched cover in the sense
of Definition \ref{finitebranched} (see \cite{Ed76} for a discussion of
quotient spaces by actions of finite groups)
and hence Theorem \ref{edmonds} shows that  
there exists a transfer map $H^*({\cal Z},\mathbb{Q})\to 
H^*({\cal D},\mathbb{Q})$ whose restriction to the invariant cohomology is an isomorphism.
This implies the statement of the lemma.
We refer once more to \cite{Ed76} for more information.
\end{proof}

Our next goal is to use the results from Section \ref{branchedmultisection}
to relate the cohomology of ${\cal D}$ to tautological classes. 
As before, we work with homology classes defined by smooth maps
from closed oriented smooth manifolds, and we think of these manifolds as 
equipped with a triangulation which defines the fundamental cycle.
As we are only
interested in rational homology and cohomology, by a well known result of
Thom, this
is sufficient for our purpose. 

We begin with an elementary relation between push-forward and pull-back
of cycles and cocycles under continuous maps. 
Let  $M$ be a closed oriented manifold of dimension $n$, equipped with a 
triangulation $T$. 
Then $M$ can be viewed as a cycle of dimension $n$, given by
a standard representation of each simplex, with compatible boundary maps. 
Furthermore, if $X$ is any topological space, and if $\phi:M\to X$ is a continuous map,
then the simplicial structure on $M$ defines an image chain $\phi(M)$ which is in fact 
a cycle. As cycles form a group, they can be added. Thus for two perhaps distinct
continuous maps $\phi_1,\phi_2:M\to X$, we can consider the formal sum $\phi_1(M)+\phi_2(M)$,
viewed as a simplicial cycle which defines 
the homology class $\phi_1[M]+\phi_2[M]$. Here and in
the sequel,
we denote by $[\beta]$ the homology class of a cycle $\beta$,  and
we denote 
by $\phi[M]$ the image of the homology class $[M]$ under $\phi$
(that is, we omit the usual lower star in this notation to reduce the number
of indices). 

Recall the notation $\psi_i:{\cal Z}\to {\cal C}$ for the projection onto the $i$-th factor in 
the fiber product ${\cal Z}$.

\begin{lemma}\label{evaluate2}
Let $M$ be a closed oriented manifold of dimension $2n$ and let 
$\phi:M\to {\cal Z}$ be a continuous map. Then $\phi(M)$ defines a simplicial cycle
$\beta(\phi)=\sum_i\psi_i\circ \phi(M)\subset {\cal C}$ 
with the property that 
\[ \frac{(-1)^n}{n!}c_1(\nu^*)^n[\beta(\phi)]={\rm ch}({\cal V})(\phi[M]).\] 
Furthermore, if $\xi\in \mathfrak{S}_{2g-2}$ is arbitrary, then we have 
$\beta(\xi\circ \phi)=\beta(\phi)$. 
\end{lemma}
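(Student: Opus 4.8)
The plan is to reduce the statement to two elementary inputs: the additivity of the Chern character over direct sums (together with Lemma \ref{fiberwise}), and the naturality of characteristic classes combined with the projection formula for evaluating a pulled-back class on a pushed-forward fundamental cycle. Since $M$ has real dimension $2n$, only the degree-$2n$ component of any cohomology class pairs nontrivially with $\phi[M]$, and this dimension restriction is what isolates the single term $\tfrac{1}{n!}c_1(\cdot)^n$ of the Chern character.

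First I would check that $\beta(\phi)$ is a genuine cycle. Each composition $\psi_i\circ\phi:M\to{\cal C}$ carries the triangulated fundamental cycle of $M$ to a simplicial image chain $\psi_i\circ\phi(M)$, which is a cycle because the boundary operator commutes with the simplicial image map; a finite sum of cycles is again a cycle, so $\beta(\phi)=\sum_i\psi_i\circ\phi(M)$ is a well-defined $2n$-cycle in ${\cal C}$ with $[\beta(\phi)]=\sum_i(\psi_i\circ\phi)[M]$. Next, by Lemma \ref{fiberwise} we have ${\cal V}=\oplus_i\psi_i^*\nu$, so additivity of the Chern character gives ${\rm ch}({\cal V})=\sum_i\psi_i^*{\rm ch}(\nu)$. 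For the line bundle $\nu$ the degree-$2n$ component of ${\rm ch}(\nu)$ is $\tfrac{1}{n!}c_1(\nu)^n$, and since $\nu^*$ is dual to $\nu$ we have $c_1(\nu)=-c_1(\nu^*)$, whence this component equals $\tfrac{(-1)^n}{n!}c_1(\nu^*)^n$.

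Then I would evaluate term by term, using naturality of Chern classes under $\psi_i$ and the projection formula:
\[
\psi_i^*\big(c_1(\nu^*)^n\big)(\phi[M])=c_1(\nu^*)^n\big((\psi_i)_*\phi[M]\big)=c_1(\nu^*)^n[\psi_i\circ\phi(M)].
\]
Summing over $i$ and using additivity of the Kronecker pairing over the cycle $\beta(\phi)$ yields
\[
{\rm ch}({\cal V})(\phi[M])=\frac{(-1)^n}{n!}\sum_i c_1(\nu^*)^n[\psi_i\circ\phi(M)]=\frac{(-1)^n}{n!}c_1(\nu^*)^n[\beta(\phi)],
\]
which is the asserted identity. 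For the symmetry, an element $\xi\in\mathfrak{S}_{2g-2}$ acts on ${\cal Z}$ by permuting the factors, so the maps $\psi_i\circ\xi$ are precisely the maps $\psi_j$ in a different order; consequently $\beta(\xi\circ\phi)=\sum_i(\psi_i\circ\xi)\circ\phi(M)$ is, after reindexing the sum over all factors, equal to $\sum_j\psi_j\circ\phi(M)=\beta(\phi)$.

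The one point requiring genuine care — what I would regard as the main obstacle — is the passage from the cohomological pairing $\psi_i^*(\cdot)(\phi[M])$ to the evaluation against the simplicial image cycle $\psi_i\circ\phi(M)$. This is the projection formula, but to invoke it cleanly one must verify that the image chains $\psi_i\circ\phi(M)$ are indeed the correct chain-level representatives of $(\psi_i)_*\phi[M]$ in the simplicial formalism set up earlier in this section. Once that identification is in place, the remainder of the argument is purely formal bookkeeping with the Chern character and the permutation action.
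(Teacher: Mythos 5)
Your proof is correct and follows essentially the same route as the paper: decompose ${\rm ch}({\cal V})=\sum_i{\rm ch}(\psi_i^*\nu)$ via Lemma \ref{fiberwise}, isolate the degree-$2n$ term $\tfrac{1}{n!}c_1(\nu)^n=\tfrac{(-1)^n}{n!}c_1(\nu^*)^n$ by the dimension of $M$, and pair against $\phi[M]$ using naturality and the pushforward identification $c_1(\nu)^n(\psi_i\phi[M])$. Your extra care at the chain level (verifying $\beta(\phi)$ is a cycle and that the image chains represent $(\psi_i)_*\phi[M]$) is exactly the bookkeeping the paper set up before the lemma and treats as given, so there is nothing to add.
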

\begin{proof}
  Since 
  ${\cal V}=\oplus_i \psi_i^*\nu$ by Lemma \ref{fiberwise} 
  and since for complex vector bundles
$V,W$ over the same topological space it holds that 
\[{\rm ch}(V\oplus W)={\rm ch}(V)+{\rm ch}(W)\]
we have 
\begin{align}
{\rm ch}({\cal V})(\phi[M])&=\sum_i {\rm ch}(\psi_i^*\nu)(\phi[M])\notag \\=
\frac{1}{n!}
\sum_ic_1(\psi_i^*\nu)^n(\phi[M]) &= \frac{1}{n!}
\sum_ic_1(\nu)^n(\psi_i\phi[M])=
\frac{(-1)^n}{n!}c_1(\nu^*)^n[\beta(\phi)]\notag
\end{align}
as claimed in the lemma.

Invariance of $\beta(\phi)$ 
under postcomposition of $\phi$ with an element of $\mathfrak{S}_{2g-2}$ is immediate
from the construction. 
\end{proof}

Let $\alpha:{\cal P}\to {\cal D}^\prime$ be the holomorphic homeomorphism 
defined in Lemma \ref{smooth}. Recall that ${\cal Z}\to {\cal D}$ is a branched
covering of degree $\vert \mathfrak{S}_{2g-2}\vert =(2g-2)!$. Thus there is a
transfer map $H_*({\cal D},\mathbb{Q})\to H_*({\cal Z},\mathbb{Q})$. 
This transfer map maps a class defined by a simplicial cycle which is compatible with the stratification 
${\cal D}=\cup_k{\cal D}(k)$ to the class of its full preimage in ${\cal Z}$, which is 
a $\mathfrak{S}_{2g-2}$-invariant cycle. Recall from Section \ref{branchedmultisection} the definition of the
projection $\Xi:{\cal H}_*\to {\cal P}$. 

\begin{proposition}\label{verticalshriek}
Let $M$ be a closed oriented manifold of dimension $2n$ and let
$F:M\to {\cal S}$ be a smooth map which is transverse to the stratification
${\cal S}=\cup_k{\cal S}(k)$. Let $\mu$ be the homology class of the preimage of 
$\alpha\circ  \Xi\circ F(M)\subset {\cal D}^\prime$ in ${\cal Z}$; then 
\[\frac{-n!}{(2g-2)!}{\rm ch}({\cal V})(\mu)=\kappa_n(P\circ \Xi\circ F[M]).\]
\end{proposition}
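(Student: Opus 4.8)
The plan is to peel off the left-hand side in three stages: rewrite $\mathrm{ch}(\mathcal{V})$ through the fibered splitting $\mathcal{V}=\oplus_i\psi_i^*\nu$ of Lemma \ref{fiberwise}, use Lemma \ref{evaluate2} to convert it into a power of $c_1(\nu^*)$ evaluated against a multiple of the zero cycle $\Delta_E$, and finally recognize that evaluation as $\kappa_n$ by reusing the computation inside the proof of Theorem \ref{pushforward}.

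First I would unwind the class $\mu$. Since $\Xi\circ F:M\to\mathcal{P}$ is transverse to the strata of $\mathcal{P}$ and $\alpha:\mathcal{P}\to\mathcal{D}'$ is the stratified holomorphic homeomorphism of Lemma \ref{smooth}, the composite $g=\alpha\circ\Xi\circ F:M\to\mathcal{D}'\subset\mathcal{D}$ represents a simplicial cycle compatible with the stratification $\mathcal{D}=\cup_k\mathcal{D}(k)$. Hence its transfer $\mu$ is the class of the full preimage $p^{-1}(g(M))$ under the branched cover $p:\mathcal{Z}\to\mathcal{D}$ of degree $(2g-2)!$. Over a point of $M$ lying in the open stratum, the fiber of $g$ is the divisor of the $2g-2$ distinct zeros of $F(x)$, and the fiber of $p^{-1}(g(M))$ consists of the $(2g-2)!$ orderings of these zeros. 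The geometric heart of the argument is to push this preimage forward under the coordinate projections $\psi_i:\mathcal{Z}\to\mathcal{C}$ and sum: for each fixed $i$ every zero occurs as the $i$-th entry of exactly $(2g-3)!$ orderings, so $(\psi_i)_*\mu=(2g-3)!\,[\bar\Delta_E]$, where $\bar\Delta_E$ denotes the pushforward to $\mathcal{C}$ of the branched multisection $\Delta_E=F^*\Delta$ furnished by Proposition \ref{poincaredual}; summing over the $2g-2$ indices gives $\sum_i(\psi_i)_*\mu=(2g-2)!\,[\bar\Delta_E]$.

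Next I would apply Lemma \ref{evaluate2} one local sheet at a time and add. Choosing local orderings of the zeros over the simplices of $M$ produces local lifts $\phi_j$ of $g$ to $\mathcal{Z}$, each satisfying $\tfrac{(-1)^n}{n!}c_1(\nu^*)^n[\beta(\phi_j)]=\mathrm{ch}(\mathcal{V})(\phi_j[M])$; because $\beta$ is $\mathfrak{S}_{2g-2}$-invariant, the local choices patch into the globally defined preimage cycle and the summed identity is independent of them. Adding over all sheets and using additivity of both sides in cycles, together with the count above, yields
\[
\mathrm{ch}(\mathcal{V})(\mu)=\frac{(-1)^n}{n!}\,c_1(\nu^*)^n\Big(\sum_i(\psi_i)_*\mu\Big)=\frac{(-1)^n(2g-2)!}{n!}\,c_1(\nu^*)^n[\bar\Delta_E].
\]

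Finally I would identify $c_1(\nu^*)^n[\bar\Delta_E]$ with the tautological class. By the projection formula and naturality of Chern classes under the bundle map $E\to\mathcal{C}$, this equals $c_1(\nu^*)^n(\Delta_E)$ computed in $E$; and equations (\ref{firstidentity}) and (\ref{secondidentity}) in the proof of Theorem \ref{pushforward}, combined with the fact from Proposition \ref{poincaredual} that $\Delta_E$ is Poincar\'e dual to $c_1(\nu^*)$, give $c_1(\nu^*)^n(\Delta_E)=(-1)^{n+1}\kappa_n(P\circ\Xi\circ F[M])$. Substituting, the signs combine to $(-1)^{2n+1}=-1$ and, after multiplying by $-n!/(2g-2)!$, the factorials cancel and leave exactly $\kappa_n(P\circ\Xi\circ F[M])$. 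The step I expect to be the main obstacle is the middle one: upgrading the single-map statement of Lemma \ref{evaluate2} to the transfer class $\mu$ across the positive-codimension strata, where zeros collide and the sheets $\phi_j$ are only defined off the branch locus. Here I would lean on transversality of $F$ and on the fact that $\cup_{k\ge 1}\mathcal{P}(k)$ has complex codimension at least one (so its preimage in $M$ has codimension at least two) to guarantee that $p^{-1}(g(M))$ is a genuine cycle and that no boundary contributions are lost in the patching.
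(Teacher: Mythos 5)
Your proposal is correct and follows essentially the same route as the paper: both interpret $\mu$ as the full preimage cycle of $\alpha\circ\Xi\circ F(M)$ under the degree-$(2g-2)!$ branched cover ${\cal Z}\to{\cal D}$, use the splitting ${\cal V}=\oplus_i\psi_i^*\nu$ (Lemma \ref{fiberwise} via Lemma \ref{evaluate2}) to trade ${\rm ch}({\cal V})(\mu)$ for $\frac{(-1)^n}{n!}c_1(\nu^*)^n$ evaluated on $(2g-2)!$ copies of $\Delta_E$, and close with Theorem \ref{branchedpoincare} and the identities in the proof of Theorem \ref{pushforward}. Your explicit $(2g-3)!$-per-coordinate count is exactly the paper's assertion that $\sum_i\psi_i(\chi)$ consists of $(2g-2)!$ copies of $(\Xi\circ F)^*(\Delta)$, and your worry about patching local sheets is unnecessary: since both sides of the Lemma \ref{evaluate2} identity are cohomology classes evaluated on a homology class, it holds for the invariant cycle $\chi$ directly, which is how the paper proceeds.
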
  
\begin{proof} By Theorem \ref{branchedpoincare}, there exists a simplicial complex
$\hat M$ of homogeneous dimension $2n$, and 
there exists a branched multisection $\sigma:\hat M\to E$
of the surface bundle $E=(P\circ \Xi \circ F)^*{\cal C}\to M$ with classifying map 
$P\circ \Xi \circ F:M\to {\cal M}_g$ 
such that 
\begin{align}
\kappa_n(P\circ \Xi\circ F[M]) &=c_1(\nu)^{n+1}[E]=(-1)^{n+1}c_1(\nu^*)^{n+1}[E]\notag\\
& =
(-1)^{n+1}c_1(\nu^*)^n(\sigma[\hat M]).\notag\end{align}
Furthermore, the image of $\sigma(\hat M)$
in $E$ is just the pullback $(\Xi \circ F)^*(\Delta)$ of the subvariety $\Delta\subset P^*{\cal C}$ 
constructed in Section \ref{acomplex}.

Since $F$ is transverse to the stratification ${\cal S}=\cup_k{\cal S}(k)$, we can choose a triangulation
$T$ of $M$ compatible with the induced stratification of $M$.
The image of $M$ under the map $\alpha\circ \Xi \circ F:M\to {\cal D}^\prime$, viewed as a map
from a simplicial complex, is a 
simplicial cycle in ${\cal D}^\prime\subset {\cal D}$ which is compatible
with the stratification. This means that for each $k\geq 0$, its intersection with the singular locus 
$\cup_{j\geq k}{\cal D}(j)$ is a subcomplex of codimension $2k$. As ${\cal D}(k)$ is just the subset
of the branch locus of the branched covering ${\cal Z}\to {\cal D}$ of all points with 
precisely $\frac{(2g-2)!}{(k+1)!}$ preimages, the image in ${\cal D}^\prime$ of 
each open simplex of $T$ of dimension $2n$ or $2n-1$ has
precisely $(2g-2)!$ preimages in ${\cal Z}$. Thus by compatibility with boundary maps, the 
preimage of this cycle in ${\cal Z}$ is a $\mathfrak{S}_{2g-2}$-invariant
cycle $\chi$ in ${\cal Z}$ whose 
homology class $[\chi]$ is mapped by the projection
$H_*({\cal Z},\mathbb{Q})\to H_*({\cal D},\mathbb{Q})$ to $(2g-2)! [\alpha\circ \Xi\circ F(M)]$
as specified in Theorem \ref{edmonds}.

By Lemma \ref{evaluate2}, 
the evaluation of $\frac{(-1)^n}{n!}c_1(\nu^*)^n$ on $\sum_i\psi_i[\chi]\in H_{2n}({\cal C},\mathbb{Q})$
equals 
${\rm ch}({\cal V})[\chi]$. However, as these constructions all preserve the fibers of the various bundles over
${\cal M}_g$,  the cycle $\sum_i\psi_i(\chi)$ is contained in the restriction of ${\cal C}$ to 
$P\circ \Xi\circ F(M)$ and can be viewed as a cycle in the pull-back surface bundle $E$.
Furthermore, with this interpretation, it 
consists of $(2g-2)!$ copies of $(\Xi\circ F)^*(\Delta)$. 
As a consequence of Proposition \ref{poincaredual}, we have
\begin{align}
(-1)^nn! {\rm ch}({\cal V}) &= c_1(\nu^*)^n(\sum_i\psi_i[\chi]) =(2g-2)!c_1(\nu^*)^n((\Xi\circ F)^*(\Delta))\notag\\
&=(2g-2)!c_1(\nu^*)^{n}(\sigma[\hat M])=
(-1)^{n+1}(2g-2)!\kappa_n(P\circ \Xi\circ F(M)).\notag\end{align} 
Together this yields the proposition.
\end{proof}

%
%




The group $\mathfrak{S}_{2g-2}$ acts by permutations as a group of  
outer automorphisms on $(\mathbb{C}^*)^{2g-2}$.
We write this action from the left. 
Thus we can form the semi-direct product
$\mathfrak{S}_{2g-2}\ltimes (\mathbb{C}^*)^{2g-2}$. The following short lemma
is not needed in the sequel but may be useful as an illustration of our
constructions.

\begin{lemma}\label{embed}
  The semi-direct product $\mathfrak{S}_{2g-2}\ltimes (\mathbb{C}^*)^{2g-2}$
  can be 
realized as an extension of a subgroup of 
$GL(2g-2,\mathbb{C})$ by a group of inner automorphisms
of $GL(2g-2,\mathbb{C})$.
\end{lemma}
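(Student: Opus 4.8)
The plan is to realize the abstract semi-direct product concretely inside $GL(2g-2,\mathbb{C})$ as the group of monomial (generalized permutation) matrices, and then to read off the asserted extension structure from this realization. First I would embed $(\mathbb{C}^*)^{2g-2}$ into $GL(2g-2,\mathbb{C})$ as the subgroup $T$ of diagonal matrices, sending $(t_1,\dots,t_{2g-2})$ to $\operatorname{diag}(t_1,\dots,t_{2g-2})$. This is a faithful homomorphism onto a closed subgroup of $GL(2g-2,\mathbb{C})$, which supplies the ``subgroup of $GL(2g-2,\mathbb{C})$'' in the statement.

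Next I would implement the $\mathfrak{S}_{2g-2}$-action by inner automorphisms. For $\sigma\in\mathfrak{S}_{2g-2}$ let $P_\sigma\in GL(2g-2,\mathbb{C})$ be the associated permutation matrix, normalized so that $P_\sigma e_i=e_{\sigma(i)}$. The key observation, which is really the whole content of the lemma, is that although permuting the coordinates of $(\mathbb{C}^*)^{2g-2}$ is a nontrivial \emph{outer} automorphism of the abelian group $(\mathbb{C}^*)^{2g-2}$, it becomes \emph{inner} once the torus is embedded in $GL(2g-2,\mathbb{C})$: a direct computation gives
\[
P_\sigma\,\operatorname{diag}(t_1,\dots,t_{2g-2})\,P_\sigma^{-1}
=\operatorname{diag}(t_{\sigma^{-1}(1)},\dots,t_{\sigma^{-1}(2g-2)}),
\]
so conjugation by $P_\sigma$ preserves $T$ and restricts there to the given permutation action. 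Hence $\sigma\mapsto(\text{conjugation by }P_\sigma)$ defines a homomorphism $\mathfrak{S}_{2g-2}\to\operatorname{Inn}(GL(2g-2,\mathbb{C}))=PGL(2g-2,\mathbb{C})$, and its image is the ``group of inner automorphisms of $GL(2g-2,\mathbb{C})$'' appearing in the statement.

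Finally I would assemble the extension. The subgroup of $GL(2g-2,\mathbb{C})$ generated by $T$ and the matrices $\{P_\sigma\}$ is exactly the group $M$ of matrices with a single nonzero entry in each row and each column; every element of $M$ factors uniquely as $P_\sigma\cdot t$ with $\sigma\in\mathfrak{S}_{2g-2}$ and $t\in T$. Sending such an element to its underlying permutation $\sigma$ yields a short exact sequence
\[
1\longrightarrow T\longrightarrow M\longrightarrow \mathfrak{S}_{2g-2}\longrightarrow 1,
\]
split by $\sigma\mapsto P_\sigma$, in which the conjugation action of the quotient on the normal subgroup $T$ is precisely the permutation action computed above. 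Comparing products then gives $M\cong\mathfrak{S}_{2g-2}\ltimes(\mathbb{C}^*)^{2g-2}$, exhibiting the semi-direct product as an extension of the subgroup $T\subset GL(2g-2,\mathbb{C})$ by the displayed group of inner automorphisms, as claimed.

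I expect no serious obstacle here, since the construction is elementary; the one point needing a line of verification is that $\mathfrak{S}_{2g-2}\to PGL(2g-2,\mathbb{C})$ is injective, which reduces to the fact that the only scalar permutation matrix is the identity, so $\operatorname{conj}$ by $P_\sigma$ is trivial exactly when $\sigma=\mathrm{id}$.
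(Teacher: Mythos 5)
Your proof is correct and takes essentially the same approach as the paper's: both embed $(\mathbb{C}^*)^{2g-2}$ as the diagonal torus in $GL(2g-2,\mathbb{C})$ and realize the permutation action of $\mathfrak{S}_{2g-2}$ by conjugation with permutation matrices, the paper writing these down only for transpositions via the explicit involution $J$. Your version is in fact slightly more complete, since you go on to identify the semi-direct product outright with the group of monomial matrices via the split exact sequence $1\to T\to M\to \mathfrak{S}_{2g-2}\to 1$ and verify injectivity of $\mathfrak{S}_{2g-2}\to PGL(2g-2,\mathbb{C})$, whereas the paper stops after checking that the generating transpositions act by inner automorphisms preserving the diagonal subgroup.
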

\begin{proof}
The subgroup of $GL(2g-2,\mathbb{C})$ of diagonal matrices 
is naturally isomorphic to $(\mathbb{C}^*)^{2g-2}$. On the other hand,
the matrix
\[ J=\begin{pmatrix} 0 & 1 &\\
1 & 0 & \\
 &  & {\rm Id}
\end{pmatrix}\]
squares to the identity, and if 
$A\in GL(2g-2,\mathbb{C})$ is arbitrary, then the product $JA$ is obtained from $A$ by
exchanging the first and the second row, while $AJ$ is obtained from
$A$ by exchanging the first and second column. As a consequence, 
conjugation by $J$ preserves the diagonal matrices and transposes the
first and second diagonal entry. 
 
More generally, for any  $1\leq i<j\leq 2g-2$ there is a 
matrix in $GL(2g-2,\mathbb{C})$ of a similar form which 
squares to the identity and whose action by conjugation 
on the diagonal matrices 
exchanges the $i$-th and $j$-the diagonal entry. Thus indeed,
the semi-direct product $\mathfrak{S}_{2g-2}\ltimes 
(\mathbb{C}^*)^{2g-2}$ is an extension of $(\mathbb{C}^*)^{2g-2}$
by a group of inner automorphisms of $GL(2g-2,\mathbb{C})$. 
\end{proof}

Let now $E \mathfrak{S}_{2g-2}$ be a classifying space 
for $\mathfrak{S}_{2g-2}$, that is, a contractible cell complex on which 
$\mathfrak{S}_{2g-2}$ acts freely. 
Then we can form the quotient
\[{\cal X}={\cal Z}\times E\mathfrak{S}_{2g-2}/\mathfrak{S}_{2g-2}\] 
which is a rational
cohomology model for ${\cal D}$. Namely, ${\cal Z}\times E\mathfrak{S}_{2g-2}$ is 
homotopy equivalent to ${\cal Z}$, and the $\mathfrak{S}_{2g-2}$-invariant rational 
cohomology for the diagonal action can be identified with 
the invariant rational cohomology for the action of $\mathfrak{S}_{2g-2}$
on ${\cal Z}$. 

Let
$\theta:H^*({\cal D},\mathbb{Q})\to H^*({\cal Z},\mathbb{Q})$ be the pull-back homomorphism, and 
let $\tau$ be the composition of the pull-back homomorphism 
$H^*({\cal Z},\mathbb{Q})\to H^*({\cal Z}\times E\mathfrak{S}_{2g-2},\mathbb{Q})$ 
with the transfer map $H^*({\cal Z}\times E\mathfrak{S}_{2g-2},\mathbb{Q})\to 
H^*({\cal Z}\times \mathfrak{S}_{2g-2}/\mathfrak{S}_{2g-2},\mathbb{Q})$. 
Then 
the sequence 
\[H^*({\cal D},\mathbb{Q})\xrightarrow{\theta} H^*({\cal Z},\mathbb{Q})\xrightarrow{\tau}
H^*({\cal Z}\times E\mathfrak{S}_{2g-2}/\mathfrak{S}_{2g-2},\mathbb{Q})\]
consists of isomorphisms, 
where $H^*({\cal Z}\times E\mathfrak{S}_{2g-2}/\mathfrak{S}_{2g-2},\mathbb{Q})$
is usually called the equivariant rational cohomology of ${\cal Z}$.  

The differentials of 
the elements of $\mathfrak{S}_{2g-2}$, viewed as a group of
biholomorphic automorphisms of ${\cal Z}$, act as a group of 
biholomorphic bundle
automorphisms on the vertical tangent bundle ${\cal V}$ of ${\cal Z}$. 
As a consequence, the Chern classes of this bundle are $\mathfrak{S}_{2g-2}$-invariant.

To investigate these Chern classes let $\hat {\cal V}$  be the pull-back of 
${\cal V}$ to ${\cal Z}\times E\mathfrak{S}_{2g-2}$. 
We have

\begin{lemma}\label{borel}
There is a rank $2g-2$ complex vector bundle
${\cal W} \to {\cal X}$ with structure group
$\mathfrak{S}_{2g-2}\ltimes (\mathbb{C}^*)^{2g-2}$
which pulls back to $\hat {\cal V}\to {\cal Z}\times E\mathfrak{S}_{2g-2}$.
\end{lemma}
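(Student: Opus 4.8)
The plan is to realize ${\cal W}$ as the orbit space of a lifted $\mathfrak{S}_{2g-2}$-action on the total space of $\hat{\cal V}$, exploiting that the diagonal action on the Borel base is free. The starting point is the observation recorded just before the lemma, that the biholomorphic permutation action of $\mathfrak{S}_{2g-2}$ on ${\cal Z}$ lifts through the differential to an action on ${\cal V}=\oplus_i\psi_i^*\nu$ by bundle automorphisms: an element of $\mathfrak{S}_{2g-2}$ permutes the line summands $\psi_i^*\nu$ among themselves, so ${\cal V}$ carries the structure of a $\mathfrak{S}_{2g-2}$-equivariant bundle over ${\cal Z}$. Since the projection ${\cal Z}\times E\mathfrak{S}_{2g-2}\to {\cal Z}$ is equivariant for the diagonal action on the source, the pull-back $\hat{\cal V}$ inherits an equivariant structure covering the diagonal action on ${\cal Z}\times E\mathfrak{S}_{2g-2}$.

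Next I would note that this diagonal action is free. Because $\mathfrak{S}_{2g-2}$ acts freely on the contractible complex $E\mathfrak{S}_{2g-2}$, the action on the product is free regardless of fixed points of $\mathfrak{S}_{2g-2}$ on ${\cal Z}$, and the quotient map
\[
q:{\cal Z}\times E\mathfrak{S}_{2g-2}\to {\cal X}
\]
is a regular covering with finite deck group $\mathfrak{S}_{2g-2}$. I would then set ${\cal W}=\hat{\cal V}/\mathfrak{S}_{2g-2}$. Because the lifted action is free on the total space and fiberwise $\mathbb{C}$-linear, the bundle projection of $\hat{\cal V}$ descends to a map ${\cal W}\to {\cal X}$ whose fibers are $(2g-2)$-dimensional complex vector spaces, and by construction $q^*{\cal W}\cong \hat{\cal V}$.

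Finally, to see that ${\cal W}\to {\cal X}$ is locally trivial and to read off its structure group, I would work locally over an evenly covered open set $U\subset {\cal X}$. There $q^{-1}(U)$ is a disjoint union of sheets indexed by $\mathfrak{S}_{2g-2}$, and ${\cal W}|_U$ is canonically identified with $\hat{\cal V}$ restricted to any single sheet; a local trivialization of ${\cal V}$, which has structure group $(\mathbb{C}^*)^{2g-2}$ as a sum of line bundles, then trivializes ${\cal W}|_U$. Passing between two such charts combines a deck transformation, which permutes the $2g-2$ summands, with the $(\mathbb{C}^*)^{2g-2}$-valued transition data of ${\cal V}$, so the transition functions take values in $\mathfrak{S}_{2g-2}\ltimes (\mathbb{C}^*)^{2g-2}$, as claimed. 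The one step demanding care, and the main technical point, is precisely this descent: verifying that the equivariant structure on $\hat{\cal V}$ descends to an honest locally trivial bundle on the quotient. This is exactly the standard fact that an equivariant vector bundle for a free action (here a regular covering, since $\mathfrak{S}_{2g-2}$ is finite) is the pull-back of a bundle on the base, and freeness of the action on $E\mathfrak{S}_{2g-2}$ is what makes it applicable even though $\mathfrak{S}_{2g-2}$ need not act freely on ${\cal Z}$.
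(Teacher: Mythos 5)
Your proposal is correct and follows essentially the same route as the paper: lift the $\mathfrak{S}_{2g-2}$-action to $\hat{\cal V}$, use freeness of the diagonal action on ${\cal Z}\times E\mathfrak{S}_{2g-2}$ to form the quotient ${\cal W}=\hat{\cal V}/\mathfrak{S}_{2g-2}$, and read off the structure group $\mathfrak{S}_{2g-2}\ltimes(\mathbb{C}^*)^{2g-2}$ from the $(\mathbb{C}^*)^{2g-2}$-structure of ${\cal V}$ combined with the deck transformations. Your explicit verification of local triviality over evenly covered sets spells out a step the paper leaves implicit, but the argument is the same.
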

\begin{proof}
  Since $\hat {\cal V}$ is the pull-back of the bundle ${\cal V}$ 
  on ${\cal Z}$ and $\mathfrak{S}_{2g-2}$ acts on each fiber
  of ${\cal Z}$ as a group of biholomorphic transformations, 
  it induces an action as a group of bundle automorphisms of
  $\hat {\cal V}$. As the action of $\mathfrak{S}_{2g-2}$ 
  on ${\cal Z}\times E\mathfrak{S}_{2g-2}$ is free,
  the same holds true for the action of
  $\mathfrak{S}_{2g-2}$ on $\hat {\cal V}$. Thus we can form the
  quotient ${\cal W}=\hat {\cal V}/\mathfrak{S}_{2g-2}$, and this
  quotient has naturally the structure of a vector bundle
  over ${\cal Z}\times E\mathfrak{S}_{2g-2}/\mathfrak{S}_{2g-2}
  ={\cal X}$.
  By construction, the pull-back of ${\cal W}$ to ${\cal Z}\times
  E\mathfrak{S}_{2g-2}$ is just the bundle $\hat {\cal V}$.

  Since the structure group of the bundle $\hat {\cal V}$ is the group
$(\mathbb{C}^*)^{2g-2}$, 
the structure group of the bundle 
${\cal W}$ is the semi-direct product
$\mathfrak{S}_{2g-2}\ltimes (\mathbb{C}^*)^{2g-2}$. 
\end{proof}

Since the group $\mathfrak{S}_{2g-2}$ acts properly on ${\cal Z}$, with 
quotient ${\cal D}$, and it 
acts freely on $E\mathfrak{S}_{2g-2}$, the first factor projection
${\cal Z}\times E\mathfrak{S}_{2g-2}\to {\cal Z}$ induces a projection 
$\Theta:{\cal X}\to {\cal D}$. This projection 
induces an isomorphism $H^*({\cal D},\mathbb{Q})\to H^*({\cal X},\mathbb{Q})$
as explained before Lemma \ref{borel}.

The Chern character
of ${\cal W}$ 
is a formal sum of cohomology classes in the cohomology ring
$H^{*}({\cal X},\mathbb{Q})=H^{*}({\cal D},\mathbb{Q})$.
As the Chern character of a vector bundle is equivariant with respect to 
pull-back, we obtain

\begin{corollary}\label{interprete}
  The Chern character ${\rm ch}({\cal W})\in H^*({\cal X},\mathbb{Q})$
  of ${\cal W}$ pulls back to the Chern character of  $\hat {\cal V}$.
  Moreover, it can be viewed as   
a sum of cohomology classes in $H^*({\cal D},\mathbb{C})$.
\end{corollary}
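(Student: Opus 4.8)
The plan is to deduce both assertions of Corollary \ref{interprete} formally from Lemma \ref{borel} together with the isomorphism $\Theta^*:H^*({\cal D},\mathbb{Q})\to H^*({\cal X},\mathbb{Q})$ recorded immediately before Lemma \ref{borel}. No new geometric input is required: all the substance has already been absorbed into the construction of the bundle ${\cal W}$, and the corollary merely repackages that output in terms of Chern characters.

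For the first assertion I would let $p:{\cal Z}\times E\mathfrak{S}_{2g-2}\to {\cal X}$ denote the quotient projection. By Lemma \ref{borel} we have $p^*{\cal W}=\hat{\cal V}$, and since the Chern character is natural under pull-back of complex vector bundles, it follows that $p^*{\rm ch}({\cal W})={\rm ch}(p^*{\cal W})={\rm ch}(\hat{\cal V})$. This is precisely the claim that ${\rm ch}({\cal W})$ pulls back to the Chern character of $\hat{\cal V}$.

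For the second assertion I would invoke that the projection $\Theta:{\cal X}\to {\cal D}$ induces an isomorphism $\Theta^*:H^*({\cal D},\mathbb{Q})\to H^*({\cal X},\mathbb{Q})$, as established before Lemma \ref{borel}; this remains an isomorphism after extending scalars to $\mathbb{C}$, since it is induced by a continuous map. Because ${\rm ch}({\cal W})$ lies in $H^*({\cal X},\mathbb{Q})\subset H^*({\cal X},\mathbb{C})$, its preimage $(\Theta^*)^{-1}{\rm ch}({\cal W})$ is a well-defined element of $H^*({\cal D},\mathbb{C})$, and under the identification furnished by $\Theta^*$ we may regard ${\rm ch}({\cal W})$ itself as a sum of cohomology classes in $H^*({\cal D},\mathbb{C})$, exactly as asserted.

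Since both steps are purely formal, I do not expect a genuine obstacle here. The only point requiring a little care is that the identification with $H^*({\cal D},\mathbb{C})$ be canonical; this is guaranteed because $\Theta^*$ is a canonically defined isomorphism rather than an arbitrary choice, so the class attached to ${\cal W}$ in $H^*({\cal D},\mathbb{C})$ is unambiguous. I would also remark that the passage from $\mathbb{Q}$ to $\mathbb{C}$ coefficients is harmless, being simply a tensor product with $\mathbb{C}$ of the already-established rational isomorphism.
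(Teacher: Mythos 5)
Your proposal is correct and follows essentially the same route as the paper: the first assertion is naturality of the Chern character under pull-back applied to the identity $p^*{\cal W}=\hat{\cal V}$ from Lemma \ref{borel}, and the second is the identification $H^*({\cal D},\mathbb{Q})\cong H^*({\cal X},\mathbb{Q})$ induced by $\Theta$ as recorded just before Lemma \ref{borel}. Your extra remark on extending scalars from $\mathbb{Q}$ to $\mathbb{C}$ is a harmless refinement the paper leaves implicit.
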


Denote by ${\rm ch}({\cal W})_k$
the component of ${\rm ch}({\cal W})$ in $H^{2k}({\cal X},\mathbb{Q})$.

\begin{proposition}\label{invariant3}
  The pull-back to ${\cal S}\subset {\cal H}$
  of the $n$-th Mumford Morita Miller 
  class $\kappa_n\in H^{2n}({\cal M}_g,\mathbb{Q})$ by the canonical
  projection ${\cal S}\to {\cal M}_g$ 
coincides with the pull-back of the class 
$-n! {\rm ch}({\cal W})_n
\in H^{2n}({\cal D},\mathbb{Q})$ by the map 
$\alpha\circ \Xi:{\cal S}\to {\cal D}^\prime\subset {\cal D}$.
\end{proposition}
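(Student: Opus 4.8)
The plan is to verify the asserted identity of classes in $H^{2n}({\cal S},\mathbb{Q})$ by evaluating both sides on a spanning set of rational homology classes. Since we work rationally, a theorem of Thom lets me assume that an arbitrary class in $H_{2n}({\cal S},\mathbb{Q})$ is represented by a smooth map $F:M\to {\cal S}$ from a closed oriented manifold $M$ of dimension $2n$, and by Proposition \ref{transverse} I may moreover take $F$ transverse to the stratification ${\cal S}=\cup_k{\cal S}(k)$. As rational cohomology is dual to rational homology, it suffices to show that $(P\circ \Xi)^*\kappa_n$ and $(\alpha\circ \Xi)^*(-n!\,{\rm ch}({\cal W})_n)$ agree on every such $F[M]$.

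Fix such an $F$ and write $c=\alpha\circ \Xi\circ F[M]\in H_{2n}({\cal D},\mathbb{Q})$. Evaluating the left hand side gives $\kappa_n(P\circ \Xi\circ F[M])$, which by Proposition \ref{verticalshriek} equals $\tfrac{-n!}{(2g-2)!}{\rm ch}({\cal V})(\mu)$, where $\mu\in H_{2n}({\cal Z},\mathbb{Q})$ is the class of the full preimage of $c$ under the branched covering $p:{\cal Z}\to {\cal D}$. It therefore remains to prove that ${\rm ch}({\cal V})(\mu)=(2g-2)!\,{\rm ch}({\cal W})_n(c)$, where ${\rm ch}({\cal W})$ is viewed in $H^*({\cal D},\mathbb{Q})$ via $\Theta^*$ as in Corollary \ref{interprete}.

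The key intermediate step is the bundle-level identity $p^*{\rm ch}({\cal W})={\rm ch}({\cal V})$ in $H^*({\cal Z},\mathbb{Q})$. This follows from the commutative square relating the quotient map $q:{\cal Z}\times E\mathfrak{S}_{2g-2}\to {\cal X}$, the homotopy equivalence $\pi_1:{\cal Z}\times E\mathfrak{S}_{2g-2}\to {\cal Z}$, the covering $p$, and the projection $\Theta:{\cal X}\to {\cal D}$: by Lemma \ref{borel} and Corollary \ref{interprete} one has $q^*{\rm ch}({\cal W})={\rm ch}(\hat{\cal V})=\pi_1^*{\rm ch}({\cal V})$, while commutativity gives $q^*{\rm ch}({\cal W})=\pi_1^*p^*{\rm ch}({\cal W})$, and injectivity of $\pi_1^*$ yields the claim (consistently with the $\mathfrak{S}_{2g-2}$-invariance of ${\rm ch}({\cal V})$ recorded in Lemma \ref{invariant}). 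The adjunction between $p^*$ and $p_*$ then gives
\[{\rm ch}({\cal V})(\mu)=\big(p^*{\rm ch}({\cal W})\big)(\mu)={\rm ch}({\cal W})(p_*\mu),\]
and since $p_*\mu=(2g-2)!\,c$ (pushing forward a full preimage multiplies by the degree of $p$, exactly as in the proof of Proposition \ref{verticalshriek}), only the degree $2n$ component survives, so ${\rm ch}({\cal V})(\mu)=(2g-2)!\,{\rm ch}({\cal W})_n(c)$. Substituting back yields $\kappa_n(P\circ \Xi\circ F[M])=-n!\,{\rm ch}({\cal W})_n(c)=(\alpha\circ \Xi)^*(-n!\,{\rm ch}({\cal W})_n)(F[M])$, which is the required equality of evaluations.

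The routine parts are the reduction to manifold cycles and the projection formula; the main conceptual content, converting $\kappa_n$ into an evaluation of ${\rm ch}({\cal V})$, has already been absorbed into Proposition \ref{verticalshriek} through Theorem \ref{branchedpoincare}. The step that genuinely requires care is the bundle-level identification $p^*{\rm ch}({\cal W})={\rm ch}({\cal V})$ and the correct bookkeeping of the factor $(2g-2)!$ as it passes through the two transfer maps — the homological transfer producing $\mu$ from $c$, and the cohomological descent identifying the invariant class ${\rm ch}({\cal V})$ with ${\rm ch}({\cal W})$ on ${\cal D}$ — so that the degrees cancel to leave exactly $-n!\,{\rm ch}({\cal W})_n$.
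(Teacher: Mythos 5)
Your proposal is correct and follows essentially the same route as the paper: reduce to evaluation on cycles $F[M]$ from closed oriented $2n$-manifolds transverse to the stratification, convert $\kappa_n$ into $\frac{-n!}{(2g-2)!}{\rm ch}({\cal V})(\mu)$ via Proposition \ref{verticalshriek}, and then identify ${\rm ch}({\cal V})$ on the transferred class with $(2g-2)!\,{\rm ch}({\cal W})$ on the original class through the Borel construction ${\cal Z}\times E\mathfrak{S}_{2g-2}$. Your only departure is cosmetic: you make the paper's appeal to ``the identification of invariant and equivariant cohomology and naturality under pull-back'' explicit as the commutative square $\Theta\circ q=p\circ\pi_1$ together with the projection formula and $p_*\mu=(2g-2)!\,c$, which is a welcome clarification rather than a different argument.
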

\begin{proof} Let $M$ be a closed oriented manifold of dimension
$2n\in \{2,\dots,2g-2\}$,
let $F:M\to {\cal S}$ be a smooth map  
transverse to the stratification ${\cal S}=\cup_k{\cal S}(k)$ and define
$\zeta=\alpha\circ \Xi\circ F:M\to {\cal D}^\prime$.
Then $\zeta(M)$ is a cycle in ${\cal D}^\prime\subset {\cal D}$. 
As homology with rational coefficients can be generated by
maps from manifolds, all we have to verify is that
$-n!{\rm ch}({\cal W})_n(\zeta[M])=\kappa_n(P\circ \Xi\circ F[M])$.

Let $\tilde \zeta[M]\in H_{2k}({\cal Z},\mathbb{Q})$ be the image of 
$\zeta[M]$ under the transfer map, defined by the projection 
${\cal Z}\to {\cal D}$, 
and let $\hat \zeta[M]$ be the 
image of $\tilde \zeta[M]$ under the 
identification $H_*({\cal Z},\mathbb{Q})\cong 
H_*({\cal Z}\times E\mathfrak{S}_{2g-2},\mathbb{Q})$ by homotopy equivalence.
Using the identification of 
invariant and equivariant cohomology and naturality under pull-back, we know that 
\[{\rm ch}({\cal V})(\tilde \zeta[M])={\rm ch}(\hat {\cal V})(\hat \zeta[M])=
(2g-2)!{\rm ch}({\cal W})(\zeta[M]).\] 
From
Proposition \ref{verticalshriek} we then obtain that  
\[\kappa_n(P\circ \Xi\circ F[M])=\frac{-n!}{(2g-2)!}{\rm ch}({\cal V})(\tilde \zeta[M])=
-n! {\rm ch}({\cal W})(\zeta[M])\] which is what we wanted to show.
\end{proof}

\begin{corollary}\label{bounded}
The classes $\kappa_n\in H^{2n}({\cal M}_g,\mathbb{Q})$ are bounded.
\end{corollary}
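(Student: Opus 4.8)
The plan is to deduce the corollary from Proposition \ref{invariant3} together with Morita's observation, recalled in the introduction, that $c_1(\nu^*)$ is a bounded class, and the standard stability properties of bounded cohomology: pull-backs, cup products, finite sums and nonzero rational multiples of bounded classes are again bounded, and by Gromov's theorem the bounded cohomology of a space depends only on its fundamental group. By Proposition \ref{invariant3}, the pull-back of $\kappa_n$ to $\mathcal{S}$ under the projection $p:\mathcal{S}\to\mathcal{M}_g$ equals $(\alpha\circ\Xi)^*\bigl(-n!\,{\rm ch}(\mathcal{W})_n\bigr)$. It therefore suffices to show that ${\rm ch}(\mathcal{W})_n\in H^{2n}(\mathcal{D},\mathbb{Q})$ is bounded and then to transport boundedness first to $\mathcal{S}$ and finally back to $\mathcal{M}_g$.

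First I would establish boundedness of the corresponding class on $\mathcal{Z}$. Since $\mathcal{V}=\oplus_i\psi_i^*\nu$ by Lemma \ref{fiberwise}, the degree $2n$ component of its Chern character is
\[
{\rm ch}(\mathcal{V})_n=\frac{(-1)^n}{n!}\sum_i\bigl(\psi_i^*c_1(\nu^*)\bigr)^n .
\]
Each factor $\psi_i^*c_1(\nu^*)$ is the pull-back under $\psi_i:\mathcal{Z}\to\mathcal{C}$ of the bounded class $c_1(\nu^*)$, hence bounded; its $n$-th cup power is bounded because cup products of bounded classes are bounded; and a finite sum of bounded classes scaled by a rational constant is bounded. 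Thus ${\rm ch}(\mathcal{V})_n$ is a bounded, $\mathfrak{S}_{2g-2}$-invariant class on the aspherical orbifold $\mathcal{Z}$.

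Next I would pass to the quotient $\mathcal{D}=\mathcal{Z}/\mathfrak{S}_{2g-2}$. Under the identifications of Lemma \ref{invariant} and Corollary \ref{interprete}, the pull-back $\theta$ along the finite branched cover $\mathcal{Z}\to\mathcal{D}$ sends ${\rm ch}(\mathcal{W})_n$ to ${\rm ch}(\mathcal{V})_n$, while the transfer $\tau$ of Theorem \ref{edmonds} satisfies $\tau\circ\theta=(2g-2)!\cdot{\rm id}$. For the quotient of a space by a finite group the transfer can be realised on bounded cochains by averaging over $\mathfrak{S}_{2g-2}$, a finite sum of bounded cochains, and is therefore compatible with the comparison homomorphism; equivalently, with $\mathbb{Q}$-coefficients the bounded cohomology of $\mathcal{D}$ is the $\mathfrak{S}_{2g-2}$-invariant part of that of $\mathcal{Z}$. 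Applying this bounded transfer to ${\rm ch}(\mathcal{V})_n=\theta({\rm ch}(\mathcal{W})_n)$ shows that $(2g-2)!\,{\rm ch}(\mathcal{W})_n$, and hence ${\rm ch}(\mathcal{W})_n$ itself, is bounded on $\mathcal{D}$; pulling back along $\alpha\circ\Xi$ then shows that $p^*\kappa_n$ is bounded on $\mathcal{S}$.

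It remains to descend from $\mathcal{S}$ to $\mathcal{M}_g$. The fibre of $p:\mathcal{S}\to\mathcal{M}_g$ is the sphere $S^{2g-1}$, which is simply connected for $g\ge 2$, so $p$ induces an isomorphism on (orbifold) fundamental groups and, by Gromov's theorem, an isomorphism $p^*:H_b^{2n}(\mathcal{M}_g,\mathbb{Q})\to H_b^{2n}(\mathcal{S},\mathbb{Q})$ compatible with the comparison maps; since the fibre is $(2g-2)$-connected and $2n\le 2g-4$, the map $p^*$ is moreover injective on $H^{2n}(\,\cdot\,,\mathbb{Q})$. Writing $p^*\kappa_n$ as the comparison image of a bounded class, pulling it back through the isomorphism on bounded cohomology, and using injectivity of $p^*$ in the commuting comparison square produces a bounded class on $\mathcal{M}_g$ mapping to $\kappa_n$, which proves the corollary. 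The main obstacle I anticipate is the third step: justifying carefully that boundedness descends through the finite branched cover $\mathcal{Z}\to\mathcal{D}$, i.e. that the transfer is realised on bounded cochains and commutes with the comparison map, and making the orbifold fundamental group bookkeeping precise. Once this compatibility is in place, the remaining reductions are formal consequences of the stability properties of bounded cohomology.
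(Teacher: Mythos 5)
Your proposal is correct and follows essentially the same route as the paper's proof: pull back Morita's bounded class $c_1(\nu)$ to $\mathcal{Z}$ so that ${\rm ch}(\mathcal{V})$ is bounded, descend through the $\mathfrak{S}_{2g-2}$-quotient to ${\rm ch}(\mathcal{W})$ on $\mathcal{D}$ via the (averaging) transfer, invoke Proposition \ref{invariant3} on $\mathcal{S}$, and descend from $\mathcal{S}$ to $\mathcal{M}_g$ using the high connectivity of the sphere fiber --- your explicit use of Gromov's mapping theorem together with injectivity of $p^*$ in degree $2n$ just spells out a step the paper leaves implicit behind its discussion of the ring $H^*(\mathcal{P},\mathbb{Q})$. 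The only point to add is the range $n\geq g-1$, where your hypothesis $2n\leq 2g-4$ fails but $\kappa_n=0$ by \cite{Lo95}, so these classes are trivially bounded, as the paper notes at the end of its proof.
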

\begin{proof}
By Theorem 4-3 of \cite{M88}, the class $c_1(\nu)\in H^2({\cal C},\mathbb{Q})$ is bounded.
As a consequence, the pull-backs 
$\psi_i^*c_1(\nu)$ of this class to 
${\cal Z}$ are bounded as well. Then the same holds true for 
${\rm ch}({\cal V})$ whose components are polynomials in the classes 
$\psi_i^*c_1(\nu)$. 

The Chern character ${\rm ch}({\cal V})$ is $\mathfrak{S}_{2g-2}$-invariant and 
descends to ${\rm ch}({\cal W})$. Thus ${\rm ch}({\cal W})\in H^*({\cal D},\mathbb{Q})$ 
is bounded, and hence the same holds true for the image of ${\rm ch}({\cal W})$
under the homomorphism $H^*({\cal D},\mathbb{Q})\to H^*({\cal D}^\prime,\mathbb{Q})$ 
induced by the inclusion. 
Now ${\cal D}^\prime$ is homeomorphic to ${\cal P}$ and therefore
${\rm ch}({\cal W})\in H^*({\cal P},\mathbb{Q})$ is bounded. 
The same then holds true for its image under the homomorphism
$H^*({\cal P},\mathbb{Q})\to H^*({\cal S},\mathbb{Q})$ induced by the projection
${\cal S}\to {\cal P}$.  

Since we have 
\[H^*({\cal P},\mathbb{Q})=H^*({\cal M}_g,\mathbb{Q})[\eta]/(\eta^{g}+c_1({\cal H})\eta^{g-1}
+\cdots +c_{g}({\cal H}))\]
where $\eta$ denotes the Chern class of the tautological bundle over the fibers of 
${\cal P}\to {\cal M}_g$ and where 
$c_i({\cal H})$ is a polynomial in the classes $\kappa_i$ \cite{M87},  
the pull-back of $H^*({\cal P},\mathbb{Q})$ to ${\cal S}$ 
coincides with the pull-back to ${\cal S}$ of the cohomology ring $H^*({\cal M}_g,\mathbb{Q})$. 
Thus by Proposition \ref{invariant3}, the component in $H^{2n}({\cal S},\mathbb{Q})$ of the
pull-back of ${\rm ch}({\cal W})$ to ${\cal S}$ coincides up to a nonzero factor with
the pull-back of the class $\kappa_n$ for $n\leq g-1$. 
From the fact that $\kappa_n=0$ 
for $n\geq g-1$ \cite{Lo95} we conclude that indeed, all Mumford Morita
Miller classes are bounded. 
\end{proof}

\section{A cocycle for $\kappa_1$}\label{cocycle}

In this final section we apply the main result of Section \ref{mmm} 
to complete the proof of 
Theorem \ref{main}. We use the assumption and notations from the previous sections.

From now on we consider a surface bundle $E\to B$ over a closed oriented surface $B$.
Recall from Section \ref{mmm}
the definition of the complex vector bundle
\[{\cal W}\to {\cal Z}\times E\mathfrak{S}_{2g-2}/\mathfrak{S}_{2g-2}={\cal X}.\]
Let $c_1({\cal W})$ be the first Chern class of ${\cal W}$. By the discussion in 
Section \ref{mmm}, we view $c_1({\cal W})$ as a cohomology class in 
$H^2({\cal D},\mathbb{Q})$ where as before, ${\cal D}\to {\cal M}_g$ is the bundle of 
divisors over ${\cal M}_g$. 
If $\zeta:B\to {\cal D}$ is any smooth map   
then the pull-back $\zeta^*(c_1({\cal W}))$ 
to $B$ is defined.

The following proposition is a reformulation of Proposition \ref{invariant3}.
We denote as before by $\sigma(E)$ the signature
of a surface bundle over a surface $E\to B$. 
Recall from Lemma \ref{lift} that up to homotopy, a smooth classifying map $f:B\to {\cal M}_g$ for $E$
admits a smooth lift $F:B\to {\cal S}$, transverse to the stratification of ${\cal S}$, 
and the map $\Xi\circ F:B\to {\cal P}$ determines via the homeomorphism
${\cal P}\to {\cal D}^\prime\subset {\cal D}$ a map $\zeta:B\to {\cal D}^\prime$
which we call \emph{characteristic} for $E$. Note that the homotopy class
of a characteristic map only depends on the homotopy class of $f$ and hence only
depends on $E$.

\begin{theorem}\label{identify}
Let $\Pi:E\to B$ be a surface bundle over a surface 
and let $\zeta:B\to {\cal D}$ be a characteristic map for $E$;
then 
\[3\sigma(E)=-\zeta^*(c_1({\cal W}))[B].\]
\end{theorem}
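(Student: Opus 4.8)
The plan is to specialize Proposition \ref{invariant3} to the case $n = 1$ and to combine it with the Hirzebruch signature formula recorded in the introduction. Recall that by Hirzebruch's signature theorem, if $f : B \to {\cal M}_g$ is a classifying map for $E$, then $3\sigma(E) = f^*\kappa_1[B]$. Since $B$ has dimension $2 \leq 2g-2$, Lemma \ref{lift} provides a lift $F : B \to {\cal S}$ with $P \circ \Xi \circ F = f$, unique up to homotopy, and by definition the characteristic map is $\zeta = \alpha \circ \Xi \circ F : B \to {\cal D}^\prime$.

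First I would apply Proposition \ref{invariant3} with $n = 1$. It asserts an equality of classes in $H^2({\cal S}, \mathbb{Q})$, namely that the pull-back of $\kappa_1$ under the canonical projection $P \circ \Xi : {\cal S} \to {\cal M}_g$ equals the pull-back of $-1!\cdot {\rm ch}({\cal W})_1$ under $\alpha \circ \Xi : {\cal S} \to {\cal D}^\prime$. The degree-two component of the Chern character is the first Chern class, so ${\rm ch}({\cal W})_1 = c_1({\cal W})$ and $1! = 1$; hence
\[(P \circ \Xi)^*\kappa_1 = -(\alpha \circ \Xi)^* c_1({\cal W}) \quad \text{in } H^2({\cal S}, \mathbb{Q}).\]

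Next I would pull this identity of cohomology classes back along $F : B \to {\cal S}$ and evaluate on the fundamental class $[B]$. Using $P \circ \Xi \circ F = f$ together with $\zeta = \alpha \circ \Xi \circ F$, naturality of the pull-back gives $f^*\kappa_1 = -\zeta^* c_1({\cal W})$, so that
\[3\sigma(E) = f^*\kappa_1[B] = -\zeta^*(c_1({\cal W}))[B],\]
which is the assertion. Since the homotopy class of $\zeta$ depends only on $E$, the right-hand side is well defined.

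I do not expect a genuine obstacle here: as already noted in the text, the theorem is essentially a restatement of Proposition \ref{invariant3} for $n = 1$. The only point requiring care is the bookkeeping of the several projections ($\Xi$, $P$, $\alpha$), in order to confirm that the identity of Proposition \ref{invariant3} holds at the level of cohomology classes on ${\cal S}$ and not merely after evaluation, so that pulling back along the lift $F$ is legitimate. One should also check that the hypotheses of Proposition \ref{invariant3} and of Lemma \ref{lift} are satisfied, which holds because $\dim B = 2$ lies in the admissible range $\{2,\dots,2g-2\}$ for $g \geq 2$.
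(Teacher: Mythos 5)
Your proposal is correct and follows essentially the same route as the paper, which likewise deduces the theorem by combining the Hirzebruch identity $3\sigma(E)=f^*\kappa_1[B]$ with Proposition \ref{invariant3} specialized to $n=1$, using ${\rm ch}({\cal W})_1=c_1({\cal W})$. Your extra care in checking that the identity holds at the level of classes on ${\cal S}$ before pulling back along $F$ only makes explicit what the paper's one-line proof leaves implicit.
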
 
\begin{proof}
It suffices to observe that 
\[3\sigma(E)=f^*\kappa_1[B]=
-\zeta^*{\rm ch}({\cal W})[B]=-\zeta^*c_1({\cal W})[B]\]
by Proposition \ref{invariant3}.  
\end{proof}

The \emph{norm} $\Vert \alpha \Vert_\infty$ 
of a bounded cohomology class
$\alpha$ is the infimum of the $L^\infty$-norms of the cocycles
representing $\alpha$. We know that the 
first Mumford Morita Miller 
class $\kappa_1$ is bounded
(\cite{M87} and Corollary \ref{bounded}), thus its norm is
finite. Our main goal is to show that $\Vert \kappa_1\Vert_\infty \leq g-1$ 
by constructing an explicit cocycle for $\kappa_1$ 
with values in $[-g+1,g-1]$. From this estimate, Theorem \ref{main}
follows from a standard argument which is reproduced in
Corollary \ref{milnorwood} below.

Consider the group ${\rm Top}^+(S^1)$ of orientation preserving homeomorphisms
of the circle $S^1$. The direct product $({\rm Top}^+(S^1))^{2g-2}$ 
of $2g-2$ copies of ${\rm Top}^+(S^1)$ is a subgroup of the 
group of homeomorphisms of the torus $(S^1)^{2g-2}$.

Let $\widetilde{\rm Top}^+(S^1)$ be the group of orientation preserving 
homeomorphisms of the real line that commute with integral translations.
This is just the universal covering of ${\rm Top}^+(S^1)$. It gives rise to a 
central extension
\[0\to \mathbb{Z}\to \widetilde{\rm Top}^+(S^1)\to {\rm Top}^+(S^1)\to 0\] 
(see p.113 of \cite{Fr17} for details). Taking a $2g-2$-fold product of this 
sequence defines a central extension
\begin{equation}\label{extension}
0\to \mathbb{Z}^{2g-2}\to (\widetilde{\rm Top}^+(S^1))^{2g-2}\to 
({\rm Top}^+(S^1))^{2g-2}\to 0.\end{equation}

The symmetric group $\mathfrak{S}_{2g-2}$ acts on $(S^1)^{2g-2}$
and on $\mathbb{R}^{2g-2}$ as
a group of permutations  
of the factors, and this action 
normalizes the groups $({\rm Top}^+(S^1))^{2g-2}$
and $(\widetilde{\rm Top}^+(S^1))^{2g-2}$.
In other words,
$\mathfrak{S}_{2g-2}$ acts on 
$({\rm Top}^+(S^1))^{2g-2}$ and on $(\widetilde{\rm Top}^+(S^1))^{2g-2}$
as an outer automorphism group, defined by 
permutations of the factors. This action commutes with the projection 
$(\widetilde{\rm Top}^+(S^1))^{2g-2}\to ({\rm Top}^+(S^1))^{2g-2}$. 
By naturality, we therefore obtain from the short exact
sequence (\ref{extension})
an exact sequence of group extension of the form
\begin{equation}\label{exactsequence}
0\to \mathbb{Z}^{2g-2} 
\xrightarrow{\iota}
\mathfrak{S}_{2g-2}\ltimes (\widetilde{\rm Top}^+(S^1))^{2g-2}\xrightarrow{\theta}
\mathfrak{S}_{2g-2}\ltimes ({\rm Top}^+(S^1))^{2g-2}\to 0.
\end{equation}

Let $\rho:\mathbb{Z}^{2g-2}\to \mathbb{Z}$ be defined by
\begin{equation}\label{rho}
\rho(x_1,\dots,x_{2g-2})=\sum_i x_i. \end{equation}
Then $\rho$ is invariant under the action of 
$\mathfrak{S}_{2g-2}$ by permutations of the standard generators
determined by the product structure which was used in the construction of the sequence
(\ref{extension}). As a consequence, the  
inclusion $\iota:\mathbb{Z}^{2g-2}\to (\widetilde{\rm Top}^+(S^1))^{2g-2}$ 
maps the kernel of $\rho$ to a $\mathfrak{S}_{2g-2}$-invariant subgroup of 
$(\widetilde{\rm Top}^+(S^1))^{2g-2}$.
Taking the quotient of the first two groups in 
the sequence (\ref{exactsequence}) by the kernel of $\rho$ and its image under
the inclusion $\iota:\mathbb{Z}^{2g-2}\to 
\mathfrak{S}_{2g-2}\ltimes (\widetilde{\rm Top}^+(S^1))^{2g-2}$, respectively, 
yields an infinite cyclic extension
\begin{equation}\label{cyclicex}
0\to \mathbb{Z}\to G\xrightarrow{\chi} \mathfrak{S}_{2g-2}\ltimes ({\rm Top}^+(S^1))^{2g-2} \to 0\end{equation}
where $G=\mathfrak{S}_{2g-2}\ltimes(\widetilde{\rm Top}^+(S^1))^{2g-2}/ \iota({\rm ker}(\rho))$.

Since the extension (\ref{extension}) is central and the homomorphism 
$\rho$ commutes with the action of the group $\mathfrak{S}_{2g-2}$, the 
extension (\ref{cyclicex}) is central and hence it defines a class
\[e\in H^2(\mathfrak{S}_{2g-2}\ltimes ({\rm Top}^+(S^1))^{2g-2}, \mathbb{Q}).\]
We refer to \cite{Br82} for details of the relation between cyclic central extensions
of a group and second cohomology, see also 
\cite{Fr17} in the context of bounded cohomology.

The following is the key observation towards a proof of
Theorem \ref{main}.

\begin{proposition}\label{controlextension}
  The cohomology class $e\in H^2(\mathfrak{S}_{2g-2}\ltimes
  ({\rm Top}^+(S^1))^{2g-2},\mathbb{Q})$ can be represented by a cocycle
  which takes values in $\{-g+1,\dots,g-1 \}$.
\end{proposition}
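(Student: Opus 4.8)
The plan is to write down one explicit integer cocycle for $e$ and then recenter it. First I would recall the standard bounded representative of the Euler class of a single factor. Fix the section $s\colon {\rm Top}^+(S^1)\to \widetilde{\rm Top}^+(S^1)$ sending $f$ to the unique lift $\tilde f$ with $\tilde f(0)\in[0,1)$. The associated integral Euler cocycle
\[c(f,h)=\widetilde{fh}^{\,-1}\,\tilde f\,\tilde h\in\mathbb{Z}\subset\widetilde{\rm Top}^+(S^1)\]
takes values in $\{0,1\}$: indeed $\tilde f\tilde h(0)=\tilde f(\tilde h(0))\in[\tilde f(0),\tilde f(0)+1)\subset[0,2)$, so the integer $\tilde f\tilde h-\widetilde{fh}=\lfloor \tilde f\tilde h(0)\rfloor$ is $0$ or $1$.

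Next I would assemble these into a cocycle for $e$. The product section $s^{2g-2}$ is equivariant under $\mathfrak{S}_{2g-2}$, because permuting the factors does not affect the normalization $\tilde f_i(0)\in[0,1)$; hence it provides a section of the extension (\ref{exactsequence}) whose symmetric-group part is a genuine homomorphism and therefore contributes nothing to the associated cocycle. For elements $(\pi,\vec f)$ and $(\sigma,\vec h)$ the resulting $\mathbb{Z}^{2g-2}$-valued cocycle has $i$-th coordinate $c(f_i,h_{\pi^{-1}(i)})$. Pushing out along $\rho$ as in (\ref{cyclicex}) composes this with the sum, so $e$ is represented by
\[E\big((\pi,\vec f),(\sigma,\vec h)\big)=\sum_{i=1}^{2g-2}c\big(f_i,h_{\pi^{-1}(i)}\big),\]
a $\mathbb{Z}$-valued cocycle whose values lie in $\{0,1,\dots,2g-2\}$ since each summand lies in $\{0,1\}$.

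Finally I would recenter. For trivial coefficients a constant $2$-cochain $a$ is the coboundary of the constant $1$-cochain $a$, since $(\delta b)(g_1,g_2)=b(g_2)-b(g_1g_2)+b(g_1)=a$ when $b\equiv a$; thus subtracting a constant does not change the cohomology class. Replacing $E$ by $E-(g-1)$ therefore yields a cohomologous cocycle still representing $e$ but taking values in $\{-g+1,\dots,g-1\}$, as claimed. The only delicate point, which I expect to be the main obstacle, is the bookkeeping for the semidirect product in (\ref{exactsequence}): one must verify that $\mathfrak{S}_{2g-2}$ lifts compatibly with the chosen product section so that it drops out of the cocycle, and that the pushout along $\rho$ is well defined on the semidirect product. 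Both facts reduce to the observation that $\rho$ and the normalization $\tilde f_i(0)\in[0,1)$ are invariant under permutation of the factors.
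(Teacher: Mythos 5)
Your proposal is correct and follows essentially the same route as the paper: the paper fixes a basepoint $x_0$ (you take $x_0=0$), uses the same normalized lifts to build a permutation-equivariant set-theoretic section of the extension (\ref{exactsequence}), observes that the resulting $\mathbb{Z}^{2g-2}$-valued failure term has each coordinate equal to a single-factor Euler cocycle value in $\{0,1\}$, pushes out along $\rho$ to get values in $\{0,\dots,2g-2\}$, and recenters by the constant coboundary $g-1$. The ``delicate point'' you flag is handled in the paper exactly as you predict, via the equivariance $s_{x_0}(\sigma,h)=\sigma(s_{x_0}(e,h))$ and the $\mathfrak{S}_{2g-2}$-invariance of $\rho$.
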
  
\begin{proof}
  The construction follows the construction of
  a cocycle for the bounded Euler
  class of a flat circle bundle as described in Chapter 10 of \cite{Fr17}.

Fix a point $x_0\in \mathbb{R}$.
For $h\in {\rm Top}^+(S^1)$ let $\tilde h_{x_0}\in \widetilde{\rm Top}^+(S^1)$ 
be the unique lift of $h$ so that
$\tilde h_{x_0}(x_0)-x_0\in [0,1)$. These lifts then define a set
theoretic section 
\[s_{x_0}:\mathfrak{S}_{2g-2}\ltimes ({\rm Top}^+(S^1))^{2g-2}\to 
\mathfrak{S}_{2g-2}\ltimes (\widetilde{\rm Top}^+(S^1))^{2g-2}\]
of the projection homomorphism 
\[\theta:\mathfrak{S}_{2g-2}\ltimes (\widetilde{\rm Top}^+(S^1))^{2g-2}\to 
\mathfrak{S}_{2g-2}\ltimes ({\rm Top}^+(S^1))^{2g-2},\]
that is, a map $s_{x_0}$ which satisfies 
$\theta \circ s_{x_0}={\rm Id}$,  
as follows.

The semi-direct product 
$\mathfrak{S}_{2g-2}\ltimes ({\rm Top}^+(S^1))^{2g-2}$ 
is the image of the free product 
\[\mathfrak{S}_{2g-2}* ({\rm Top}^+(S^1))^{2g-2}\]
by a surjective homomorphism. Thus an element of 
$\mathfrak{S}_{2g-2}\ltimes ({\rm Top}^+(S^1))^{2g-2}$ can be represented 
by a pair $(\sigma,h)$ which consists of a 
$(2g-2)$-tuple
$h=(h^1,\dots,h^{2g-2})\in ({\rm Top}^+(S^1))^{2g-2}$ and 
a permutation $\sigma\in \mathfrak{S}_{2g-2}$. Viewed
as an element of the group of homeomorphisms of the torus $(S^1)^{2g-2}$, 
the action of $(\sigma,h)$ on $(S^1)^{2g-2}$ is given by
\begin{align}
(\sigma,h)(x^1,\dots,x^{2g-2}) &=\sigma(h^1(x^1),\dots,h^{2g-2}(x^{2g-2}))\notag\\
&=
  (h^{\sigma^{-1}(1)}(x^{\sigma^{-1}(1)}),\cdots,
  h^{\sigma^{-1}(2g-2)}(x^{\sigma^{-1}(2g-2)})).\notag
\end{align}
The group multiplication reads (written from left to right, that is, we apply first the left
map, followed by the right map)
\begin{align}
(\sigma,g)\cdot (\eta,h)(x^1,\dots,x^{2g-2})\notag\\ =
  \eta(h^1(g^{\sigma^{-1}(1)}x^{\sigma^{-1}(1)}),\dots,
  h^{2g-2}(g^{\sigma^{-1}(2g-2)}x^{\sigma^{-1}(2g-2)})).\end{align}

For $(\tilde x^1,\dots, \tilde x^{2g-2})\in \mathbb{R}^{2g-2}$ define
\[s_{x_0}(\sigma, (h^1,\dots,h^{2g-2}))(\tilde x^1,\dots,\tilde x^{2g-2})=
\sigma (\widetilde{h^1}_{x_0}(\tilde x^1),\dots,\widetilde{h^{2g-2}}_{x_0}(\tilde x^{2g-2})).\] 
The map $s_{x_0}$ is equivariant with respect to the action of 
$\mathfrak{S}_{2g-2}$, that is, we have 
\[s_{x_0}(\sigma,h)=\sigma(s_{x_0}(e,h)),\]
and it defines a set theoretic section of 
$\theta$ (just apply the definition).

Using the homomorphism $\rho:\mathbb{Z}^{2g-2}\to \mathbb{Z}$ defined in 
equation (\ref{rho}), 
for a pair $((\sigma,g),(\eta,h))\in \mathfrak{S}_{2g-2}\ltimes ({\rm Top}^+(S^1))^{2g-2}$
define 
\begin{equation}\label{cocycle2}
c_{x_0}((\sigma,g),(\eta,h))=
\rho(s_{x_0} ((\sigma,g)\cdot (\eta,h))^{-1}s_{x_0}(\sigma,g)
s_{x_0}(\eta,h))\in \mathbb{Z}\end{equation}
where we identify $ \mathbb{Z}^{2g-2}$
with its image in 
$\mathfrak{S}_{2g-2}\ltimes (\widetilde{\rm Top}^+(S^1))^{2g-2}$ by the inclusion 
$\iota$ given by the exact sequence (\ref{exactsequence}). 
That this makes sense follows from the fact that $s_{x_0}$ is a set theoretic section of
the homomorphism $\theta$ and hence 
\[s_{x_0} ((\sigma,g)\cdot (\eta,h))^{-1}s_{x_0}(\sigma,g) s_{x_0}(\eta,h)\in {\rm ker}(\theta)=
{\rm im}(\iota).\]

We claim that the cochain
$c_{x_0}$ for the group $\mathfrak{S}_{2g-2}\ltimes ({\rm Top}^+(S^1))^{2g-2}$
is a cocycle which 
represents the class $e$ defined by the central extension (\ref{cyclicex}). 
However, this is standard (see \cite{Br82,Fr17}). Namely,
given a central extension $G$ of a group $H$ by the group $\mathbb{Z}$ and
a set theoretic section $\sigma:H\to G$ which maps the identity in 
$H$ to the identity in $G$, the expression
\[c(g,h)=\sigma(gh)^{-1}\sigma(g)\sigma(h)\in \mathbb{Z}\]
defines a cocycle via the bar resolution 
where as before, we identify $\mathbb{Z}$ with its
image under the inclusion $\mathbb{Z}\to G$ defining the central extension $G$ 
(see Theorem IV.3.12 of \cite{Br82}), and this cocycle 
defines the class in $H^2(H,\mathbb{Z})$ which determines $G$. 
  
In the situation of the exact sequence (\ref{exactsequence}), we  
take the set theoretic section $s_{x_0}:\mathfrak{S}_{2g-2}\ltimes ({\rm Top}^+(S^1))^{2g-2}\to 
\mathfrak{S}_{2g-2}\ltimes(\widetilde{\rm Top}^+(S^1)^{2g-2})$ and project it into the quotient
by the normal subgroup $\iota({\rm ker}(\rho))$ to obtain a set theoretic section of the
surjection $\chi$ in the 
infinite cyclic extension (\ref{cyclicex}). This construction 
defines a cocycle with values in $\mathbb{Z}$.
As taking this quotient just amounts to applying the homomorphism $\rho$, this shows
that the cocycle (\ref{cocycle2}) 
indeed defines the cohomology class $e$.

We next bound the $L^\infty$-norm of the cocycle $c_{x_0}$. 
To this end let 
\[(\sigma,(h^1,\dots,h^{2g-2})),
(\eta,(g^1,\dots,g^{2g-2}))\in \mathfrak{S}_{2g-2}\ltimes 
({\rm Top}^+(S^1))^{2g-2},\] with lifts 
$s_{x_0}(\sigma,g)=(\sigma,(\widetilde{h^1}_{x_0},\dots,\widetilde{h^{2g-2}}_{x_0}))$ and 
$s_{x_0}(\eta,h)=(\eta,(\widetilde{g^1}_{x_0},\dots,\widetilde{g^{2g-2}}_{x_0}))$. 
The value $c_{x_0}((\sigma,g),(\eta,h))$ can be computed as follows.

Consider the permutation $\sigma \cdot \eta$ (read from left to right, that is,
we apply first $\sigma$ and then $\eta$). For each $i$
write $u^{\sigma\eta(i)}=\widetilde{g^i}_{x_0} \widetilde{h^{\sigma(i)}}_{x_0}$ (read from left to right, 
that is, we apply first $\widetilde{g^i}_{x_0}$ and then $\widetilde{h^{\sigma(i)}}_{x_0}$).
This defines a $(2g-2)$-tuple of elements
of $({\rm Top}^+(S^1))^{2g-2}$, and 
using the above description of the elements of 
$\mathfrak{S}_{2g-2}\ltimes ({\rm Top}^+(S^1))^{2g-2}$, we obtain
\[(\sigma,g)\cdot (\eta,h)=(\sigma \cdot \eta,u).\]

Apply the set theoretic section $s_{x_0}$ to obtain
\[s_{x_0}((\sigma,g)\cdot (\eta,h))^{-1}s_{x_0}(\sigma,g)s_{x_0}(\eta,h)=
(e,p_1,\dots, p_{2g-2})\in \mathfrak{S}_{2g-2}\ltimes \mathbb{Z}^{2g-2}\]
where $p_{\sigma \eta(i)}=(\widetilde{(g^ih^{\sigma(i)})}_{x_0})^{-1}
\widetilde{g^i}_{x_0}\widetilde{h^{\sigma(i)}}_{x_0}$. 
Following the computation on p.114 of \cite{Fr17}, we conclude that
$p_i\in \{0,1\}$ and hence by definition, we have
\[\rho(e,p_1,\dots,p_{2g-2})\in \{0,\dots,2g-2\}.\] 

To complete the proof of the Proposition, just
add to the cocycle $c_{x_0}$ the constant
cocycle which maps all points to $-g+1$. This cocycle is a coboundary and hence
the resulting cocycle still 
represents the class of $c_{x_0}$, but it takes values in 
$\{-g+1,\dots,g-1\}$ (see Lemma 12.1 of \cite{Fr17} for details on this construction).
\end{proof}

\begin{remark}
Another way to show Proposition \ref{controlextension} would be to work directly
in the orbifold covering ${\cal Z}$ of ${\cal D}$, that is, to work 
with the group $({\rm Top}^+(S^1))^{2g-2}$, and to observe that the resulting
cocycle is invariant under the action of the group $\mathfrak{S}_{2g-2}$. 
We chose not to adapt this strategy as we felt that 
the proof presented (although a bit more technical) 
explains better the underlying mechanism for the norm bound for $\kappa_1$.
\end{remark}

We are now ready to complete the proof of Theorem \ref{main} from the introduction.

\begin{proposition}\label{final}
$c_1({\cal W})\in H^2({\cal D},\mathbb{Q})$ 
is a bounded cohomology class, with norm $\Vert c_1({\cal W})\Vert_\infty\leq g-1$.
\end{proposition}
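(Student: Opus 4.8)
The plan is to identify $c_1({\cal W})$ with the pull-back of the class $e$ from Proposition \ref{controlextension} under the classifying map of a flat structure, and then to invoke the fact that pull-back does not increase the norm of a bounded cohomology class. First I would recall Morita's flatness result (Theorem 4-3 of \cite{M88}): the circle subbundle of the vertical cotangent bundle $\nu^*$ of the universal curve is flat, with structure group ${\rm Top}^+(S^1)$, and $c_1(\nu^*)$ is the pull-back of the bounded Euler class under the associated homomorphism. Applying this factor by factor to the vertical tangent bundle ${\cal V}=\oplus_i\psi_i^*\nu$ on ${\cal Z}$ of Lemma \ref{fiberwise}, the product of the circle subbundles of the summands is a flat $(S^1)^{2g-2}$-bundle whose structure group reduces to $({\rm Top}^+(S^1))^{2g-2}$; its classifying data is a homomorphism $\pi_1^{orb}({\cal Z})\to ({\rm Top}^+(S^1))^{2g-2}$.

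Next I would let the symmetric group enter. Since $\mathfrak{S}_{2g-2}$ acts on ${\cal Z}$ by permuting the factors, compatibly with the factor-wise flat structures, the flat bundle descends over the model ${\cal X}\simeq {\cal D}$ to a flat bundle with structure group $\mathfrak{S}_{2g-2}\ltimes ({\rm Top}^+(S^1))^{2g-2}$. This is precisely a reduction of the structure group of ${\cal W}$ from $\mathfrak{S}_{2g-2}\ltimes (\mathbb{C}^*)^{2g-2}$ established in Lemma \ref{borel}, and it is classified by a homomorphism $\phi:\pi_1^{orb}({\cal D})\to \mathfrak{S}_{2g-2}\ltimes ({\rm Top}^+(S^1))^{2g-2}$.

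The crucial identification is then $c_1({\cal W})=\phi^*e$. Here $c_1({\cal W})=c_1(\det {\cal W})$, and on structure groups the determinant corresponds to the total winding, that is, to the homomorphism $\rho(x_1,\dots,x_{2g-2})=\sum_i x_i$ of (\ref{rho}) which collapses the central extension (\ref{extension}) to the infinite cyclic extension (\ref{cyclicex}) whose Euler class is $e$. Since the Euler class of the determinant circle bundle of a flat bundle is the pull-back of the universal Euler class of its structure group, and since the finite factor $\mathfrak{S}_{2g-2}$ contributes nothing to rational cohomology, this yields $c_1({\cal W})=\phi^*e$.

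Finally, by Proposition \ref{controlextension} the class $e$ is represented by a cocycle taking values in $\{-g+1,\dots,g-1\}$, so $\Vert e\Vert_\infty\leq g-1$. Bounded cohomology is functorial, $\phi^*$ does not increase the $L^\infty$-norm, and the comparison map ${\cal D}\to B\pi_1^{orb}({\cal D})$ is isometric in bounded cohomology by Gromov's mapping theorem; hence $\Vert c_1({\cal W})\Vert_\infty=\Vert \phi^*e\Vert_\infty\leq \Vert e\Vert_\infty\leq g-1$. The hard part will be the identification $c_1({\cal W})=\phi^*e$ in the previous paragraph: one must carefully match the first Chern class (the determinant representation) with the summation homomorphism $\rho$, and verify that Morita's flat structure descends correctly through the $\mathfrak{S}_{2g-2}$-quotient, keeping track of the orbifold points by passing to a finite manifold cover as elsewhere in the paper.
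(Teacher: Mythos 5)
Your proposal is correct and follows essentially the same route as the paper: Morita's flatness applied factorwise to ${\cal V}=\oplus_i\psi_i^*\nu$, descent of the resulting flat structure through the $\mathfrak{S}_{2g-2}$-action to a homomorphism into $\mathfrak{S}_{2g-2}\ltimes({\rm Top}^+(S^1))^{2g-2}$, the identification of $c_1({\cal W})$ with the pull-back of $e$, and the norm bound from Proposition \ref{controlextension} via functoriality. The only cosmetic difference is that you verify the identification $c_1({\cal W})=\phi^*e$ via the determinant representation matched with $\rho$, whereas the paper checks it after restriction to the finite-index subgroup $({\rm Top}^+(S^1))^{2g-2}$ (equivalently, pulling back to ${\cal Z}$), where $e$ restricts to the sum of the Euler classes of the factors --- the same computation in equivalent form.
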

\begin{proof}
By \cite{M88} (see also \cite{H12} for a geometric proof), the circle subbundle of the 
vertical tangent
bundle $\nu$ of ${\cal C}\to {\cal M}_g$ is flat. By this we mean that there exists a
homomorphism $\chi:{\rm Mod}(S_{g,1})\to {\rm Top}^+(S^1)$ such that
this circle bundle can be represented as 
\[{\cal T}_{g,1}\times S^1/{\rm Mod}(S_{g,1})\]
where ${\rm Mod}(S_{g,1})$ acts on the Teichm\"uller space 
${\cal T}_{g,1}$ of marked Riemann surfaces of genus $g$ with a single marked
point by precomposition of marking, and it acts on ${\rm Top}^+(S^1)$ via
the representation $\chi$.

Consider the fiberwise tangent bundle ${\cal V}$ of the fiber bundle ${\cal Z}\to {\cal M}_g$.
Let as before $\psi_j:{\cal Z}\to {\cal C}$ be the projection onto the $j$-th factor. Then 
we have ${\cal V}=\oplus \psi_i^*\nu$ and therefore ${\cal V}=\oplus L_i$ where
$L_i=\psi_i^*\nu$ is a complex line bundle with flat circle subbundle by naturality under
pull-back. As a consequence, there is a homomorphism 
from the orbifold fundamental group $\Gamma$ of ${\cal Z}$ into
$({\rm Top}^+(S^1))^{2g-2}$ which determines a torus subbundle of the bundle ${\cal V}$. 
By equivariance of the projections $\psi_j$ under the action of the group 
$\mathfrak{S}_{2g-2}$ by fiberwise permutations, this homomorphism can be chosen
to be equivariant under the action of $\mathfrak{S}_{2g-2}$ and hence it induces a homomorphism
\[\Theta:\mathfrak{S}_{2g-2}\ltimes \Gamma\to \mathfrak{S}_{2g-2}\ltimes
({\rm Top}^+(S^1))^{2g-2}.\]

Viewing the group $\mathfrak{S}_{2g-2}\ltimes \Gamma$ as the orbifold fundamental group of 
${\cal X}={\cal Z}\times E\mathfrak{S}_{2g-2}/\mathfrak{S}_{2g-2}$ whose rational cohomology 
coincides with the rational cohomology of ${\cal D}$, 
it now suffices to show that the class 
$e\in H_b^2(\mathfrak{S}_{2g-2}\ltimes ({\rm Top}^+(S^1))^{2g-2},\mathbb{Q})$
pulls back by $\Theta$ to the 
class $c_1({\cal W})\in H^2({\cal D},\mathbb{Q})$. This in turn holds true if
the pull-back of $e$ to 
${\cal Z}$ defines the first Chern class of the fiberwise tangent bundle ${\cal V}$ of ${\cal Z}$.
But as ${\cal V}=\oplus \psi_i^*\nu$, we have 
\[c_1({\cal V})=\sum_i c_1(\psi_i^*\nu).\]
Furthermore, as the circle subbundle of $\psi_i^*\nu$ is flat, Theorem 12.6 of \cite{Fr17} 
and naturality under pull-back shows that
$c_1(\psi_i^*\nu)=\psi_i^*\chi^*{\rm eu}({\rm Top}(S^1))$ where ${\rm eu}$ denotes the Euler class. 

The pull-back of 
the class 
$e\in H^2(\mathfrak{S}_{2g-2}\ltimes  {\rm Top}^+(S^1)^{2g-2},\mathbb{Q})$ 
to the finite index normal subgroup
$({\rm Top}^+(S^1))^{2g-2}$
is just the sum of 
the pull-backs of the Euler classes of the $2g-2$ factors. But this shows that
the class $c_1({\cal V})\in H^2({\cal Z},\mathbb{Q})$ equals the pull-back of the class $e$ under
the homomorphism $\Theta\circ \iota$ where $\iota:\Gamma\to \mathfrak{S}_{2g-2}\ltimes \Gamma$
denotes the inclusion homomorphism.
 The proposition now follows from Proposition \ref{controlextension}. 
\end{proof}

Since the norm of the class 
$e\in H^2(\mathfrak{S}_{2g-2}\ltimes ({\rm Top}(S^1))^{2g-2},\mathbb{Q})$ is 
bounded from above by $g-1$, we obtain as an immediate corollary

\begin{corollary}\label{milnorwood}
Let $B$ be a closed oriented 
surface of genus $h\geq 1$ and let $f:B\to {\cal D}$ be a continuous map; then
$\vert c_1({\cal W})(B)\vert \leq (2g-2)(2h-2)$.
\end{corollary}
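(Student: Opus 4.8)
The plan is to deduce the corollary directly from the norm estimate of Proposition \ref{final} by invoking the standard duality between bounded cohomology and the Gromov simplicial volume. First I would reduce the evaluation $c_1({\cal W})(B)=\langle f^*c_1({\cal W}),[B]\rangle$ to a computation in group cohomology. Indeed, by the proof of Proposition \ref{final} the class $c_1({\cal W})$ is the pull-back of the class $e\in H^2(\mathfrak{S}_{2g-2}\ltimes({\rm Top}^+(S^1))^{2g-2},\mathbb{Q})$ under the monodromy homomorphism of the orbifold fundamental group of ${\cal D}$. Composing with $f_*$ therefore produces a homomorphism $\phi:\pi_1(B)\to \mathfrak{S}_{2g-2}\ltimes({\rm Top}^+(S^1))^{2g-2}$ with $f^*c_1({\cal W})=\phi^*e$, so that $c_1({\cal W})(B)=\langle \phi^*e,[B]\rangle$, where $[B]\in H_2(\pi_1(B),\mathbb{Z})=H_2(B,\mathbb{Z})$ is the fundamental class. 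Since pulling back a bounded cocycle does not increase its $L^\infty$-norm, Proposition \ref{controlextension} and Proposition \ref{final} give $\Vert \phi^*e\Vert_\infty\leq \Vert e\Vert_\infty\leq g-1$.

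The key step is the duality inequality. For any bounded cocycle $c$ representing $\phi^*e$ and any fundamental cycle $z$ of $B$ one has $c_1({\cal W})(B)=\langle c,z\rangle$, whence $\vert c_1({\cal W})(B)\vert\leq \Vert c\Vert_\infty\cdot\Vert z\Vert_1$. Taking the infimum over bounded representatives $c$ and over fundamental cycles $z$ yields
\[\vert c_1({\cal W})(B)\vert\leq \Vert \phi^*e\Vert_\infty\cdot \Vert B\Vert,\]
where $\Vert B\Vert$ denotes the simplicial volume of $B$; this is the form of Gromov's duality principle recorded in \cite{Fr17}. Because $B$ is an honest closed oriented surface, no orbifold subtleties enter at this stage: the inequality is applied to the manifold $B$ and the ordinary cohomology class $\phi^*e\in H^2(B,\mathbb{R})$.

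It then remains to recall that the simplicial volume of a closed oriented surface of genus $h$ is $\Vert B\Vert=2\vert\chi(B)\vert=2(2h-2)$ for $h\geq 1$ (with both sides vanishing for the torus), which is again standard, see \cite{Fr17}. Combining the two estimates gives
\[\vert c_1({\cal W})(B)\vert\leq (g-1)\cdot 2(2h-2)=(2g-2)(2h-2),\]
which is exactly the asserted bound. The argument is routine given Proposition \ref{final}; the only substantive inputs are the duality principle and the value $\Vert B\Vert=2(2h-2)$, and these are precisely what make the constants match. I should note that one may bypass simplicial volume entirely by evaluating the explicit integer cocycle $c_{x_0}$ of Proposition \ref{controlextension}, which takes values in $\{-g+1,\dots,g-1\}$, directly on the standard fundamental $2$-cycle of $\pi_1(B)$ associated with its genus-$h$ surface relator; the main (purely bookkeeping) obstacle there is to count the bar-simplices in that cycle, which contribute a factor $2(2h-2)$, and to check that summing $c_{x_0}$ over them against the bound $g-1$ reproduces the same estimate.
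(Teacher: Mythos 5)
Your proposal is correct and follows essentially the same route as the paper: the bound $\Vert c_1({\cal W})\Vert_\infty \leq g-1$ from Proposition \ref{final}, the duality inequality $\vert\langle \alpha,[B]\rangle\vert \leq \Vert\alpha\Vert_\infty\,\Vert [B]\Vert_{\ell^1}$, and the value $\Vert [B]\Vert_{\ell^1}=2\vert\chi(B)\vert$ for a closed oriented surface, exactly as in Theorem 12.13 of \cite{Fr17}. Your detour through the monodromy homomorphism $\phi$ is harmless but unnecessary, since Proposition \ref{final} already gives the norm bound on $c_1({\cal W})$ itself; the sketched direct evaluation of the integer cocycle $c_{x_0}$ is a genuine alternative in principle but is left unexecuted.
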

\begin{proof}
Since $\Vert c_1({\cal W})\Vert_\infty \leq g-1$ and since the simplicial
volume $\Vert [B]\Vert_{\ell^1}$ of the surface $B$ equals $\vert 2\chi(B)\vert$ \cite{Fr17}, 
we obtain
as in Theorem 12.13 of \cite{Fr17} that
\[\vert c_1({\cal W})[B] \vert =\vert \langle c_1({\cal W}),[B]\rangle \vert
\leq \Vert c_1({\cal W})\Vert_\infty \Vert  [B] \Vert_{\ell^1} \leq (g-1)\vert 2 \chi(B)\vert.\]
This concludes the proof. 
\end{proof}

As a corollary, we obtain Theorem \ref{main} from the introduction.

\begin{corollary}\label{main2}
Let $E\to B$ be a surface bundle over a surface with Euler characteristic 
$\chi(E)$ and signature $\sigma(E)$; then
\[ 3\vert \sigma(E) \vert \leq \vert \chi(E)\vert.\]
\end{corollary}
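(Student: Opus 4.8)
The plan is to deduce Corollary~\ref{main2} by combining Corollary~\ref{milnorwood} with the two basic facts about surface bundles recorded in the introduction: the Euler characteristic formula $\chi(E)=(2g-2)(2h-2)$ and the signature identification $3\sigma(E)=f^*\kappa_1[B]$ coming from Hirzebruch's signature theorem. The point is that Corollary~\ref{milnorwood} is, up to a sign, precisely the Milnor--Wood type inequality for $\sigma(E)$ once we translate $c_1(\mathcal{W})$ back into $\kappa_1$ via Theorem~\ref{identify}.

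\begin{proof}
We may assume $g\geq 3$, since otherwise $\kappa_1=0$ and hence $\sigma(E)=0$, so the inequality holds trivially. Let $f:B\to {\cal M}_g$ be a classifying map for $E$ and let $\zeta:B\to {\cal D}$ be a characteristic map for $E$ as defined before Theorem~\ref{identify}. By Theorem~\ref{identify} we have
\[
3\sigma(E)=-\zeta^*(c_1({\cal W}))[B].
\]
By Corollary~\ref{milnorwood}, applied to the map $\zeta:B\to {\cal D}$, we obtain
\[
\vert \zeta^*(c_1({\cal W}))[B]\vert\leq (2g-2)(2h-2).
\]
Combining these two facts gives $3\vert \sigma(E)\vert\leq (2g-2)(2h-2)$. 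On the other hand, the Euler characteristic of $E$ satisfies $\chi(E)=(2g-2)(2h-2)$, so that $\vert \chi(E)\vert=(2g-2)(2h-2)$, since $g\geq 2$. Hence
\[
3\vert \sigma(E)\vert\leq \vert \chi(E)\vert,
\]
which is the assertion of the corollary.
\end{proof}

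The only point requiring care is the boundary case $h=0$, where Corollary~\ref{milnorwood} is stated for $h\geq 1$: if $B=S^2$ then $\pi_1(B)=1$, so the monodromy is trivial, the bundle is a product, and $\sigma(E)=0$, so the inequality is immediate. Thus the genuine content is entirely contained in the norm bound $\Vert c_1({\cal W})\Vert_\infty\leq g-1$ of Proposition~\ref{final}, together with the identification $3\sigma(E)=-\zeta^*(c_1({\cal W}))[B]$; the main obstacle in the whole argument lies upstream, in establishing those two results, while the deduction here is a direct substitution.
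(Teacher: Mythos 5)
Your proof is correct and follows exactly the route the paper intends: Corollary \ref{main2} is deduced by substituting Theorem \ref{identify} into the bound of Corollary \ref{milnorwood} and invoking $\chi(E)=(2g-2)(2h-2)$. Your explicit treatment of the boundary cases $g\leq 2$ (where $\kappa_1=0$ forces $\sigma(E)=0$) and $h=0$ (where the base is simply connected and the signature vanishes) is a welcome addition that the paper leaves implicit, but it does not change the argument.
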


\bigskip
\noindent
MATHEMATISCHES INSTITUT DER UNIVERSIT\"AT BONN\\
ENDENICHER ALLEE 60\\ 
53115 BONN, GERMANY

\bigskip
\noindent 
e-mail: ursula@math.uni-bonn.de
\end{document}